\theoremstyle{plain}
\newtheorem{thm}{Theorem}[section]
\newtheorem{cor}[thm]{Corollary}
\newtheorem{prop}[thm]{Proposition}
\newtheorem{lem}[thm]{Lemma}
\theoremstyle{definition}
\newtheorem{defn}[thm]{Definition}
\newtheorem{rem}[thm]{Remark}
\newtheorem{ex}[thm]{Example}
\numberwithin{equation}{section}
\newcommand{\mc}{\mathcal}
\newcommand{\mb}{\mathbb}
\newcommand{\R}{\mathbb{R}}
\newcommand{\x}{\mathbf{x}}
\newcommand{\X}{\mathbf{X}}
\newcommand{\z}{\mathbf{z}}
\newcommand{\ba}{\begin{aligned}}
\newcommand{\ea}{\end{aligned}}
\newcommand{\bt}{\begin{thm}}
\newcommand{\et}{\end{thm}}
\newcommand{\bc}{\begin{cor}}
\newcommand{\ec}{\end{cor}}
\newcommand{\bl}{\begin{lem}}
\newcommand{\el}{\end{lem}}
\newcommand{\bpf}{\begin{proof}}
\newcommand{\epf}{\end{proof}}
\newcommand{\bpb}{\begin{problem}}
\newcommand{\epb}{\end{problem}}
\newcommand{\bd}{\begin{defn}}
\newcommand{\ed}{\end{defn}}
\newcommand{\bn}{\begin{note}}
\newcommand{\en}{\end{note}}
\newcommand{\bp}{\begin{prop}}
\newcommand{\ep}{\end{prop}}
\newcommand{\bex}{\begin{exercise}}
\newcommand{\eex}{\end{exercise}}
\newcommand{\ben}{\begin{enumerate}}
\newcommand{\een}{\end{enumerate}}
\theoremstyle{plain}
\title{The Dyson and Coulomb games}
\author{Ren\'{e} Carmona, Mark Cerenzia\thanks{Corresponding author}, Aaron Zeff Palmer\footnote{Electronic addresses: \texttt{rcarmona@princeton.edu, cerenzia@princeton.edu, azp@math.ubc.ca}}}
\date{}
\begin{document}

\maketitle

\begin{abstract}
We introduce and investigate certain $N$ player dynamic games on the line and in the plane that admit Coulomb gas dynamics as a Nash equilibrium. 
Most significantly, we find that the universal local limit of the equilibrium
is sensitive to the chosen model of player information in one dimension but not in two dimensions.
We also find that players can achieve game theoretic symmetry through selfish behavior despite
non-exchangeability of states, which allows us to establish strong localized convergence 
of the $N$--Nash systems to the expected mean field equations against locally optimal player ensembles, i.e., those exhibiting the same local limit as the Nash--optimal ensemble. In one dimension, this convergence notably features a nonlocal--to--local transition in the population dependence of the $N$--Nash system.
\end{abstract}

\tableofcontents

\section{Introduction}

In the random matrix theory (RMT) community, there is a well known study \cite{buses1} by physicists Krb\'{a}lek and \v{S}eba arguing that 
the spacing and arrival statistics of buses on a route in Cuernavaca, Mexico are well-described by the local statistics of eigenvalues of a random matrix belonging to
the Gaussian unitary ensemble (GUE), which have a \emph{repulsive} density on $\mb{R}^N$, $N \geq 2$, proportional to
\begin{equation} \label{density}
	\prod_{1 \leq k < \ell < N}|x^\ell - x^k|^\beta \cdot \exp \left \{ -\frac{N-1}{2}\sum_{i=1}^{N} (x^i)^2 \right \},  \ \ \beta = 2.
\end{equation} 
Although the repulsion parameter choice $\beta=2$ is related to the very special algebraic structure 
of determinantal correlations and to ``symmetry class" (see Chapter 1 of Forrester \cite{pf1}),
there have been many studies identifying the emergence of such statistics for general $\beta>0$, often where repulsive dynamics are natural: 
among parked cars \cite{parkedcars, umich, parkedcars2, goegue}, pedestrians \cite{pedestrian}, perched birds \cite{perchedbirds},
and even the New York City subway system \cite{nycsubway} (such statistics have also appeared in a geographical study \cite{france} of France, in genetics \cite{genes}, 
and notably among gaps between zeros of the Riemann zeta function \cite{primes}, which has already generated much research in number theory).

There have been some direct attempts to help explain such observational studies through rigorous mathematics,
such as Baik--Borodin--Deift--Suidan \cite{buses2} and Baik \cite{umich2}, 
but a common theme among these real-world systems has largely been ignored: 
they are all \emph{decentralized}.
Indeed, a striking aspect of the NYC subway study \cite{nycsubway} is that the MTA imposes a schedule on subway cars,
quite in contrast to the Mexican bus system, 
and yet Jagannath-Trogdon \cite{nycsubway} observe that even modest elements of \emph{individual} control can still produce RMT statistics.
The numerical physics paper \cite{pw1} of Warcho{\l} appears to be the only study of an agent-based model prior to our work here.

Motivated to prove rigorous theorems on this implicit link, 
we introduce a dynamic $N$ player game whose closed and open loop models
are explicitly solvable with Nash--optimal trajectories given by Dyson Brownian motion \eqref{dbm}, first introduced in \cite{theOG} by Freeman Dyson;
we accordingly call it the \emph{Dyson game}. 
More precisely, players in this prototype game aim to minimize a long time average (``ergodic") cost based on their distance from the origin
and on the reciprocal squared distance between one another (similar to Calogero--Moser--Sutherland models; see Remark \ref{cmsremark} below).
Essentially the same construction for logarithmic interactions holds in two (and higher) dimensions,
but there is an additional cost term incentivizing collinearity based on the reciprocal squared diameter of the circumcircle 
of the triangle formed with any two other players.
We refer to this two dimensional extension as the \emph{Coulomb game} since the Nash optimal trajectories are given
by planar Coulomb dynamics, studied recently
by Bolley-Chafa\"{i}-Fontbona \cite{bolley2018dynamics}
and Lu-Mattingly \cite{lu2019geometric}.

Merely constructing an agent--based model or a genuine player--based game yielding Coulomb interactions is not difficult, 
but it is significant for us to be able to identify how the solution depends on player information
and how the freedom to act individually can achieve ``game theoretic symmetry" 
despite the natural non-exhangeability in equilibrium. This latter feature is qualitatively consistent with the motivating example of 
the buses of Cuernavaca as well as the other observational studies above. It turns out such game theoretic symmetry \emph{fails} in the open loop model
but is present in the closed loop model of ``full information" (see the end of Section \ref{verificationtheorem}), allowing us to pursue strong ``localized" convergence of equations (see the main theorems stated in Section \ref{mainresults}).

Open loop models are often easier to analyze because opponent reactions may not be considered by players in their search for a Nash equilibrium.
The open loop model for the Dyson and Coulomb games is further simplified by a \emph{potential structure} (Lemmas \ref{potentialgame} and \ref{2dpotentialgame}), which reduces the search to a single auxiliary global problem: 
a ``central planner" can tell every player what to do and they end up not acting selfishly (though a priori they could). 
This puts us in the realm of classical statistical physics, but to continue the game theoretic interpretation,
the open loop Nash equilibrium prescribes higher repulsion to accommodate players densely packed near the origin.  
In contrast, the closed loop model realizes the option to behave selfishly through 
consideration of opponent reactions, leading to a less repulsive equilibrium that benefits players near the edge.
Thus, the closed loop equilibrium is more ``fair" in that all players incur the same cost, 
but this cost is higher than the average open loop cost; see Figure \ref{onlyfigure} for an illustration in the one dimensional case.

Further, both folklore and rigorous results of game theory suggest that the difference between these two models should disappear 
as $N \to \infty$ given mean field interactions; see Remark 2.27 and pgs.122, 212 in Carmona--Delarue \cite{bible1} for discussion of explicit solutions,
as well as the recent work of Lacker \cite{lackerinprep} for a theoretical approach. 
The intuition is that a single player cannot dramatically influence another through the empirical distribution if $N$ is large. Corollary \ref{2depsilonnash} below confirms (approximate) equilibria of the models ``converge together" for the Coulomb game; 
however, in the Dyson game, the highly singular dependence on the population allows nearby neighbors to have a large impact on a given player's cost, and
so the difference between the closed and open loop Nash equilibriums does \emph{not} disappear in the limit (cf. Figure \ref{onlyfigure}).
Consequently, in one dimension, the universal local limit of the equilibrium depends on player information, not just on the form of cost they face. 
We believe this result offers a new perspective on Problem 9 of Deift's list \cite{deift1} (though we do not construct a specific model for the parking problem); see the end of Section \ref{verificationtheorem} for discussion.

There has been a growing interest 
in explicit solutions to $N$ player games, in the mean field convergence problem, and in rank--based systems. 
As Lacker--Zariphopoulou \cite{thaleia1} recently point out, explicit solutions for $N$ player games are scarce, 
especially for the setting of full information.
A solvable prototype for this setting is the class of Linear--Quadratic (LQ) models, examples of which are reviewed in Section 2.4 of \cite{bible1}; 
see \cite{thaleia1} and references therein for some non--LQ but explicitly solvable models.
The work of Bardi \cite{bardi1} is an informative explicit case--study in the Gaussian ergodic setting,
but it is rarely remarked that the model works with ``narrow strategies" (see Fischer \cite{fischer1}).  
For the convergence problem, the early works \cite{bardi1,fischer1,lacker3} consider open loop and narrow strategies, but
much research on convergence for closed loop models with full information has been generated by the systematic approach of
Cardaliaguet--Delarue--Lasry--Lions \cite{cdll1}; see
the recent work of Lacker \cite{lackerinprep} and references therein.
Finally, there are many models in the literature that include costs depending on rank, e.g. \cite{bayrakter3, bayrakter1,lacker1, nutz2}, but 
the player states are still designed to be exchangeable.

The Dyson and Coulomb games exhibit many interesting properties that are atypical, if not new, for the literature on many player games. 
First, they furnish non--LQ but explicitly solvable $N$ player models that involve singular convolution transforms of the empirical distribution. 
Second, the Dyson game naturally features a nonlocal--to--local transition in the population argument of the $N$--Nash system as $N \to \infty$ 
(see Cardaliaguet \cite{nonlocaltolocal} for more general discussion of such transitions).
Third, the difference between the closed and open loop models of the Dyson game
does not vanish in the limit and thus the models exhibit different universal local limits.
Fourth and finally, we believe this paper is the first to work directly with the naturally ordered players in equilibrium
and to establish convergence of equations in such a strong localized sense.

We remark that consideration of the Dyson game
seems to have been anticipated somewhat in the bibliographical notes
of Chapter 23 of Villani \cite{villani2008optimal}. 
Indeed, pgs. 691-692 review Nelson's approach
to the foundations of quantum mechanics and give as an example the  Euler equation
with negative cubic pressure (cf. the mean field equation \eqref{openloopmasterequation} below). Matytsin \cite{matytsin1994large}
was the first to observe this equation arise in the qualitative description of some random matrix models, and his results were later made rigorous by Guionnet--Zeitouni \cite{guionnet2004first}, \cite{guionnet2002large} using large deviation techniques; see  Menon \cite{menon2017complex} for more discussion on this thread of literature.
These bibliographical notes of Villani conclude by broadly observing how this same class of such variational 
problems had recently arisen (unexpectedly at the time) in the initial work of Lasry-Lions \cite{mfg} on mean field games, who also in fact
reference this connection at the end of their Section 2.5.
Thus, we believe the direction we pursue here is quite natural
given the subsequent developments of many player game theory.

\subsection*{Outline}

After introducing frequently used notation in Section \ref{notation}, 
we review our main results in Section \ref{mainresults}, 
stating completely those we consider most significant. 
We then articulate the closed and open loop models for the Dyson game in Section \ref{dysongame} 
and use the solutions \eqref{ehjbsolution}, \eqref{potentialfunction} of the ergodic PDEs \eqref{dysonnash}, \eqref{opendysonnash} 
to prove Verification Theorems \ref{verificationthm}, \ref{openverificationthm} in Section \ref{verificationtheorem}.
In Section \ref{meanfieldapproximation}, we use the mean field analogs \eqref{masterfield},\eqref{meanfieldpotential} of these solutions to
\emph{guess} the limiting equations \eqref{dysonmaster},\eqref{openloopmasterequation} on Wasserstein space; 
most notably, the master equation \eqref{dysonmaster} features a local coupling.
Then, using the theory of gradient flows on Wasserstein space \cite{ambrosio1}, 
Section \ref{MFGformulation} proves a Verification Theorem \ref{meanfieldverification} for the associated mean field game formulation. 
Finally, Section \ref{justifypassage} recovers the master equation \eqref{dysonmaster} from the $N$ Nash system \eqref{dysonnash} by integrating
sequences of equations against locally optimal ensembles; Section \ref{coulombgame} pursues the same for the two dimensional Coulomb game. 

\section{Notation} \label{notation}
Fix $N \geq 2$.
We often make use of the abbreviation ``$\sum_{k:k\neq i}$" for ``$\sum_{\substack{k=1 \text{,} k \neq i}}^N$".
We write $\mc{P}(\R)$ for the set of probability measures $\mu$ on $\R$, $\mc{P}_2(\R)$ for the subset with finite second moment, 
$\int_\R |x|^2 \mu(dx)<\infty$, and $\mc{P}^p(\R)$ for the subset with densities $m(x)$ in $L^p(\R)$, $p>0$. 
We also write $\mc{P}^p_2(\R) :=  \mc{P}_2(\R)\cap\mc{P}^p(\R) $.  If $T: \R \to \R$ is Borel measurable, 
we denote the push forward of $\mu \in \mc{P}(\R)$ by $T_\# \mu := \mu \circ T^{-1}$. We always assume
$\mc{P}(\R)$ to be topologized by weak convergence of probability measures and $\mc{P}_2(\R)$
to be endowed with the $r$--Wasserstein distance with $r=2$, defined by
$$
d_r(\mu,\nu) := \min_{\gamma \in \Gamma(\mu,\nu)} \left \{  \left ( \int_{\R \times \R} |x-y|^r \ \gamma(dx,dy)  \right )^{1/r}    \right \}, \ \ r \geq 1,
$$
where $\Gamma(\mu,\nu)$ is the set of couplings $\gamma$ for $\mu,\nu \in \mc{P}_2(\R)$.
For any $\mu \in \mc{P}(\R)$
and $f \in L^1(\mu)$, we write $\mu[f] := \int_{\R} f(x) \mu(dx)$ (and the same for higher dimensions).

To emphasize the nonlocal--to--local transition when passing to the limit in equations, 
we use Greek letters ``$\mu(dx),\nu(dx),\ldots$" for probability measures
and use Latin letters ``$m(x),n(x),\ldots$" for their densities. 
We also use bold symbols ``$\x,\X$" to indicate vectors in $\R^N$ and use ``$\z,\bZ$"
for vectors in $(\R^2)^N$; whether the symbols $\boldsymbol{\phi}, \boldsymbol{\alpha}$
are vectors in $\R^N$ or $(\R^2)^N$ will be clear from context.
Accordingly, for any $\x = (x^1,\ldots,x^N) \in \R^N$, we write 
$$\mu_\x^N := \frac{1}{N} \sum_{k = 1}^N \delta_{x^k} , \ \  \mu_\x^{N,i}:= \frac{1}{N-1} \sum_{k: k \neq i} \delta_{x^k}$$ 
for the ordinary and $i$th empirical distribution of $\x$, $1 \leq i \leq N$, respectively. 
We use similar notation for $\bz = (z^1, \ldots, z^N)$, $z^i \in \R^2$, $1 \leq i \leq N$.
For norms, we write ``$|z|$'' for $z \in \RR^2$ and ``$\Vert \bx \Vert$, $\Vert \bz \Vert$'' for $\bx \in \RR^N$ and $\bz \in (\RR^2)^N$.

For partial derivatives, we often use the abbreviations such as $\partial_i := \frac{\partial}{\partial x^i}$.
A functional $\mc{U} : \mc{P}_2(\R^d) \to (-\infty, +\infty]$ is said to have
a \emph{linear functional derivative}
if there exists a function $(\mu,x) \mapsto \frac{\delta \mc{U}}{\delta \mu}(\mu)(x)$ continuous on $\mc{P}_2(\R^d) \times \R^d$
such that for all $\mu, \nu \in \mc{P}_2(\R^d)$,
$$
\mc{U}(\mu) - \mc{U}(\nu)
 = \int_0^1 \int_{\R^d} \frac{\delta \mc{U}}{\delta \mu}
 ((1-t)\nu + t \mu)(x) (\mu - \nu)(dx) dt. 
$$
Chapter 10 of Ambrosio-Gigli-Savar\'{e} \cite{ambrosio1}
puts forth a theory of subdifferential calculus for functionals
on the Wasserstein space $\mc{P}_2(\R^d)$,
and one can often interpret their intrinsic notion of minimal selection ``$\partial^o \mc{U}(\mu)(x)$'' of subdifferential as
\begin{equation} \label{wassersteingradient}
\partial_\mu \mc{U}(\mu)(x) := \nabla_x \frac{\delta \mc{U}}{\delta \mu}(\mu)(x).
\end{equation}
From this point of view, one can refer to ``$\partial_\mu$" as the \emph{Wasserstein gradient}.
The same object was independently arrived at by Lions \cite{lionslectures}
using an extrinsic approach
and thus is also referred to as the \emph{L-derivative}; see Chapter 5 of 
Carmona-Delarue \cite{bible1} or Gangbo-Tudorascu \cite{gangbo2019differentiability} for a deeper discussion and comparison.

For any $\mu \in \mc{P}(\R)$,
consider a function $h: \R \to \R$ such that, for almost every $x \in \R$, the integral $\int_\R h(x-y) \mu(dy)$ exists or its principal
value exists. Denote this quantity ``$(h * \mu)(x)$", the \emph{convolution of $h$ with $\mu$}.  
We will find it convenient to set for $z \in \R^2 \setminus \{ (0,0) \}$
\begin{equation} \label{htransforms}
	h_0(z) := \log |z|, \ \  h_1(z): = \frac{z}{|z|^2}, \ \  h_2(z):=\frac{1}{|z|^2}.
\end{equation}
We retain the same definitions when evaluated at $x \in \R \setminus \{ 0 \}$, viewed
as embedded in $\R^2 \setminus \{ (0,0) \}$.
Hence, we may write the \emph{Hilbert Transform} as
\begin{equation} \label{hilberttransform}
	H\mu(x) := \text{p.v.} \int_{\R} \frac{\mu(dy)}{x-y} = \lim_{\epsilon \downarrow 0}\int_{|x-y| > \epsilon} \frac{\mu(dy)}{x-y} = (h_1 * \mu)(x).
\end{equation}
Recall there exists $A_p>0$ such that $\Vert H\mu \Vert_p \leq A_p \Vert m \Vert_p$ 
for all $\mu \in \mc{P}^p(\R)$, $1 < p < \infty$, with density $m(x)$ (see, e.g., Theorem 1.8.8 of Blower \cite{gb1}).
Define also the transform\footnote{There is no need for a principal value integral since $z/|z|^2$ is integrable near $z=(0,0)$ in $\RR^2$.}
\begin{equation} \label{coulombtransform}
	\mathcal{H}\mu(z) :=\int_{\R^2} \frac{z-w}{|z-w|^2} \mu(dw), \ \ z \in \R^2.
\end{equation}
and write
\begin{equation}
	\label{coulombtransformproduct}
\mathcal{H}[\mu \mathcal{H}\mu](z) 
:= \int_{\R^2} \left \langle \frac{z-w}{|z-w|^2} , \mathcal{H}\mu(w)  \right \rangle \mu(dw),
\end{equation}
where $\langle \xi, \eta \rangle$ stands for ordinary dot product of $\xi, \eta \in \RR^2$. Also we let $D(\xi,\eta)$ denote the diameter of the circumcircle of the triangle
determined by $\xi,\eta,$ and $(0,0)$ in $\RR^2$. 

For any $\beta > 0$, let $\mu_\beta \in \mc{P}(\R)$ denote the Wigner semicircle law with density
\begin{equation} \label{semicircledensity}
	m_\beta(x) := \frac{1}{\pi \beta} \sqrt{2\beta - x^2} \cdot \mathbbm{1}_{[-\sqrt{2\beta} , \sqrt{2\beta}]}(x). 
\end{equation}
Throughout the paper, $(\Omega, \mc{F}, \mb{F} = (\mc{F}_t)_{t \geq 0}, \mb{P})$ will denote a complete filtered probability space
supporting an $N$-dimensional Wiener process $(\mathbf{W}_t)_{t \geq 0}$ and supporting a $2N$-dimensional Brownian motion $(\mathbf{B}_t)_{t \geq 0}$ in 
$(\R^2)^N$ whose two dimensional components we denote by $B^i_t \in \R^2$.
We often write $X \sim \mu$ to mean the random variable $X$ has distribution $\mu \in \mc{P}(\R)$. 

Define the open Weyl chamber $$\mc{W}^N := \{ \x \in \R^N : x^1 < \cdots < x^N \}$$
and write $\overline{\mc{W}^N}$ for its closure. 
Similarly, we write
$$
\mc{D}^N := \{ \bz \in (\R^2)^N : z^i \neq z^j, \text{for all } 1 \leq i < j \leq N \}. 
$$
We let $B(r) := \{ z \in \RR^2 \  : \ |z| < r \}$ denote the open ball of radius $r \geq 0$.
Finally, we will need some linear ordering $\prec$ on $\RR^2 \approx \CC$.
Given such an ordering, restrict the domain $\mc{D}^N$ by defining
$$
\mc{D}^N_{\text{ordered}} := \{ \bz \in (\RR^2)^N : z^1 \prec \cdots \prec z^N , \ z^k \neq z^\ell, 1 \leq k < \ell \leq N \}.
$$
For a concrete example of such a linear ordering $\prec$ and an application,
see Example \ref{applicationofspiral} below.

\section{Statement and review of main results} \label{mainresults}
\subsection*{The Dyson Game}
Fix $\sigma >0$ and $C_2 \in \RR$. For $1 \leq i \leq N$, let 
	\begin{equation} \label{statecost}
		F^{N,i}(\x) :=\frac{(x^i)^2}{8} + C_2 \frac{(h_2 * \mu^{N,i}_\bx)(x^i)}{N-1} = \frac{(x^i)^2}{8} + \frac{C_2}{(N-1)^2} \sum_{k:k\neq i} \frac{1}{(x^i-x^k)^2} , \ \ \x \in \mc{W}^N.
	\end{equation}
and consider the ergodic $N$-Nash system
	\begin{equation} \label{dysonnash}
		-\frac{\sigma^2}{2(N-1)} \Delta_\x v^{N,i}(\x) + \sum_{k:k \neq i} \partial_kv^{N,k}(\x)\partial_kv^{N,i}(\x) + \frac{1}{2}(\partial_iv^{N,i})^2(\x) = F^{N,i}(\x)
		- \lambda^{N,i}, \ \ \  \x \in \mc{W}^N,  \ \ 1 \leq i \leq N.
	\end{equation} 
The game theoretic counterpart of these equations is the \emph{closed loop} model of the Dyson game, detailed in Section \ref{dysongame}, where
Lemma \ref{Nsubsolution} shows that if we can write $C_2 = \beta(\frac{3}{2}\beta - 2\sigma^2)/4$ for some $\beta \in \R$, then the ergodic value pairs 
	\begin{equation} \label{ehjbsolution}
		\begin{cases}
		v^{N,i}_\beta(\x) :=
			\frac{(x^i)^2}{4} - \frac{\beta}{2} (\log | \cdot | * \mu^{N,i}_\x)(x^i), & \x \in \mc{W}^N \\
			\lambda^{N,i}_\beta := \frac{\beta}{4}+\frac{\sigma^2}{4(N-1)}  & 
		\end{cases}, \ \ 1 \leq i \leq N,
	\end{equation} 
	form a classical solution to the $N$--Nash system \eqref{dysonnash} on $\mc{W}^N$.
	Lemma \ref{openloopehjbtheorem} similarly solves the \emph{open loop} model of the Dyson game
given the relationship $C_2=\beta (\beta - 2 \sigma^2)/4$. 
Now define 
	\begin{equation} \label{roots}
		\beta_{closed}(C_2) :=\frac{2}{3} \left ( \sigma^2 + \sqrt{\sigma^4 + 6 C_2}  \right ), \ \  
		\beta_{open}(C_2) :=  \sigma^2 + \sqrt{\sigma^4 + 4 C_2}. 
	\end{equation}
Then Theorems \ref{verificationthm}, \ref{openverificationthm} verify that \emph{$\beta/\sigma^2$-Dyson Brownian motion} $(\bX^*_t)_{t \geq 0}$, with components
\begin{equation} \label{dbm}
	dX_t^{*i} = -\partial_iv^{N,i}_\beta(\X^*_t)dt + \frac{\sigma}{\sqrt{N-1}}dW^i_t =  \left [ \frac{\beta}{2}(h_1 * \mu_{\X^*_t}^{N,i})(X^{*i}_t)
	- \frac{X^{*i}_t}{2} \right ] dt + \frac{\sigma}{\sqrt{N-1}}dW^i_t,
\end{equation}
serves as a \emph{closed loop} Nash equilibrium if $\beta = \beta_{closed}(C_2)$ and 
as an \emph{open loop} Nash equilibrium if $\beta = \beta_{open}(C_2)$. These optimality concepts are reviewed in Section \ref{dysongame}. 
In particular, $\beta_{closed}(C_2) \neq \beta_{open}(C_2)$ for any $C_2 \geq -\sigma^4/6$, and they do not converge together as $N \to \infty$; see the end of Section \ref{verificationtheorem} for more discussion. 
\begin{remark} \label{cmsremark}
	Lemma \ref{Nsubsolution} showing \eqref{ehjbsolution} solves
	\eqref{dysonnash} was the real starting point of 
	this paper: it indicates that the most basic element of exact solvability
	of Calogero-Moser-Sutherland models (cf. Proposition 11.3.1 of Forrester \cite{pf1}, especially the algebraic identities \eqref{keyidentity1}, \eqref{keyidentity2} below) is compatible with Nash optimality as expressed through the $N$ Nash system \eqref{dysonnash}.	
	We note that the form ``$\beta (\beta - 2 \sigma^2)/4$" is characteristic of 
the classical Calogero--Moser--Sutherland models (up to a constant factor),
but the new relationship ``$\beta ( \frac{3}{2} \beta - 2\sigma^2)/4$'' has the interpretation of 
yielding a smaller repulsion for a given such coefficient value $C_2$, which benefits players at the ``edge'' who have more space.
Letting $\sigma=1$, we recall the significance of $\beta=2$ as corresponding to the free fermion regime (see Section 11.6 of Forrester \cite{pf1}). Our calculations suggest
$\beta=4/3$ might admit an analogous interpretation and significance, which we plan to pursue in future work.
\end{remark}

Now observe we may write the solution \eqref{ehjbsolution} to the $N$--Nash system \eqref{dysonnash}
	in the form $v^{N,i}_\beta(\x) = U_\beta(x^i,\mu_\x^{N,i})$, where 
	\begin{equation} \label{masterfield}
		U_\beta(x,\mu):= \frac{x^2}{4} - \frac{\beta}{2} (\log | \cdot | * \mu)(x), \ \ (x, \mu) \in \R \times \mc{P}^p_2(\R), \ \ 1 < p < \infty.
	\end{equation} 
	Using equation (71) of Cardaliaguet--Porretta \cite{pcap1}
	to \emph{guess} the mean field analog of the $N$--Nash system \eqref{dysonnash},
Lemma \ref{masterequation} shows that the pair $(U_\beta(x,\mu), \frac{\beta}{4})$ forms a solution to a mean field equation we refer to as the \emph{Voiculescu--Wigner master equation} on $\R \times \mc{P}^p_2(\R)$, $2\leq p < \infty$: 
		\begin{equation} \label{dysonmaster}
			\int_{\R} \partial_\mu U(x,\mu)(z) \partial_xU(z,\mu)\mu(dz)  +  \frac{1}{2} |\partial_xU(x,\mu)|^2 
			= \frac{x^2}{8} + \frac{\pi^2\beta^2}{8}m(x)^2-\lambda, 
		\end{equation}
where we recall the definition \eqref{wassersteingradient} for the Wasserstein gradient ``$\partial_\mu$".
Relying on this result,
Theorem \ref{meanfieldverification} identifies the limiting flow $\bmu^*=(\mu^*_t)_{t \geq 0}$
of the empirical measures $(\mu_{\bX^*_t}^N )_{t \geq 0}$ of \eqref{dbm} as an equilibrium of the mean field game formulation, according to Definition \ref{mfgsolutions}.

Since the results we just reviewed above are standard principles (albeit in a nonstandard and singular setting requiring somewhat special formalism and arguments), we have left their full statements to the body of the paper.
We turn now to stating completely the most significant theorems.

If the reader compares the ergodic $N$--Nash system \eqref{dysonnash} with the master equation \eqref{dysonmaster}, they might be puzzled how to go from one to the other; in particular,
	it is unclear what should account for the change in the form of the cost (not only the nonlocal--to--local transition, but also the coefficients).
	To the point, we saw the diffusion parameter $\sigma^2$ is linked to the state cost $F^{N,i}(\x)$ of \eqref{statecost} through its relationship to the singular cost coefficient, $C_2 = \beta (\frac{3}{2}\beta - 2\sigma^2)/4$,
	but only the coupling parameter $\beta$ appears (explicitly) in the master equation \eqref{dysonmaster}. The astute reader may object that \eqref{dysonmaster} was merely a guess, so it might not be the right 
	mean field analog of \eqref{dysonnash}.
	It turns out that, despite vanishing, the diffusion term \emph{does} contribute in the limit but its contribution
	cancels with the local contributions from the drift--interaction terms, leaving
	only a local contribution from the control cost that accounts for the apparent discrepancy above.

To recover the master equation \eqref{dysonmaster} from the $N$-Nash system \eqref{dysonnash} rigorously,  
	we generalize in two ways the program outlined in Remark (x) after Theorem 2.3 of Lasry--Lions \cite{mfg}: 
	first, to serve as test functions, we work with a natural class of player ensembles that are \emph{locally optimal} for the Dyson game, and second we consider \emph{localized} convergence.
	By ``locally optimal,'' we mean we can recover the master equation \eqref{dysonmaster} by integrating the $N$--Nash system \eqref{dysonnash} 
	against ensembles sharing the same (universal) local limit as the Nash--optimal ensemble;
	by ``localized,'' we mean instead of working with an exchangeable system, 
	we classify the ranked players by their mean field location.
	
		More precisely, fix $\beta>\sigma^2 >0$ and let $V(x)$ be twice continuously differentiable with $V''(x) \geq c_V$ for all $x \in \R$ and some constant $c_V>0$. 
	Let $\X \in \mc{W}^N$ be distributed according to a \emph{generalized $\beta/\sigma^2$--ensemble}: 
	\begin{equation} \label{invariantmeasuregeneral}
		\mu^N_{\beta, V}(d\x) = m^N_{\beta, V}(\x) d \x = \frac{N!}{Z_{\beta, V}^N} \cdot 
		\prod_{1 \leq k < \ell \leq N} (x^\ell - x^k)^{\beta/\sigma^2} \cdot \exp \left \{ - \frac{N-1}{\sigma^2} \sum_{i=1}^N V(x^i) \right \} \mathbbm{1}_{\mc{W}^N}(\x) d\x,
	\end{equation}
	where $Z_{\beta,V}^N<\infty$ is a normalization constant. We also write
	$\mu_\beta^N, m_\beta^N$ for $\mu_{\beta, V}^N, m_{\beta,V}^N$ when $V(x)=x^2/2$.
	Recall that the local behavior of the player ensembles $\mu^N_{\beta,V}$ is dominated by the effective repulsion parameter $\beta/\sigma^2$ and thus $\mu^N_{\beta,V}$ shares the same local limit as the Nash--optimal Gaussian ensemble
	$\mu^N_\beta$. 
	Note that $\mu^N_{\beta,V}$ is the invariant distribution of the diffusion
\begin{equation} \label{generalizeddbm}
	dX^i_t = \left [  \frac{\beta}{2} (h_1*\mu_{\X_t}^{N,i})(X^i_t)  - \frac{V'(X^i_t)}{2} \right ] dt + \frac{\sigma}{\sqrt{N-1}} dW^i_t, \ \ 1 \leq i \leq N.
\end{equation}

By Theorems 4.4.1, 4.4.3.(i), and 5.4.3 of Blower \cite{gb1}, there exists a unique measure $\mu_{\beta,V} \in \mc{P}^2(\R)$, compactly supported on a single 
interval with density $m_{\beta,V}(x) \in L^2(\R)$, 
that satisfies the $1$--Wasserstein convergence $d_1(\mu_\X^N,\mu_{\beta,V}) \overset{N \to \infty}{\to} 0$ almost surely for $\X \sim \mu_{\beta,V}^N$, and
that satisfies the \emph{Euler--Lagrange} (or \emph{Schwinger--Dyson}) equation:
	\begin{equation} \label{eulerlagrange}
		\beta H\mu_{\beta,V}(x) - V'(x) = 0, \ \ \text{for all } \ x \in \text{supp}(\mu_{\beta,V}).
	\end{equation}
As above, we write $\mu_\beta, m_\beta$ for $\mu_{\beta,V}, m_{\beta,V}$ when $V(x)=x^2/2$.
Finally, we write 
$$
\gamma^q = \gamma_{\beta,V}^q := \inf \{ x \in \R : \mu_{\beta,V}((-\infty,x]) \geq q \},  \ \ q \in [0,1].
$$

\begin{prop}\label{conjecture}
	Assume $\beta > \sigma^2 > 0$. 
	Consider a sequence $i = i(N)$ of player(s) such that $\lim_{N \to \infty} i/N = q \in [0,1]$. Then we have
	\begin{equation} \label{singularcostlimit}
		\lim_{N \to \infty} \frac{\mb{E} (h_2 * \mu^{N,i}_\X)(X^i)}{N-1}   
		= \lim_{N \to \infty} \frac{1}{(N-1)^2}  \mb{E} \sum_{k: k \neq i} \frac{1}{(X^i-X^k)^2}
	= \frac{\pi^2\beta}{3(\beta-\sigma^2)} m_{\beta,V}(\gamma^q)^2.
	\end{equation}	
\end{prop}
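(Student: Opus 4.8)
The plan is to compute the limit of the expected singular energy $\frac{1}{(N-1)^2}\mathbb{E}\sum_{k:k\neq i}(X^i-X^k)^{-2}$ by exploiting the fact that $\mu^N_{\beta,V}$ is the invariant measure of the diffusion \eqref{generalizeddbm}, so that one can extract integral identities from stationarity. The key observation is that $(X^i-X^k)^{-2}$ is the derivative in $X^i$ of $-(X^i-X^k)^{-1}$, so the singular cost is closely tied to the derivative of the Hilbert-transform-type interaction appearing in the drift. Concretely, I would first apply the stationary Fokker--Planck relation (or equivalently integrate $\partial_i$ of a suitable test function against $\mu^N_{\beta,V}$ and use that the generator integrates to zero) to the function $x^i \mapsto (h_1 * \mu^{N,i}_\x)(x^i)$. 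Since $\partial_i (h_1 * \mu^{N,i}_\x)(x^i) = -\frac{1}{N-1}\sum_{k:k\neq i}(x^i-x^k)^{-2} = -(h_2*\mu^{N,i}_\x)(x^i)$, this produces an exact identity at finite $N$ relating $\mathbb{E}(h_2*\mu^{N,i}_\x)(X^i)/(N-1)$ to expectations of the smoother quantities $\mathbb{E}[(h_1*\mu^{N,i}_\x)(X^i)]^2$, $\mathbb{E}(h_1*\mu^{N,i}_\x)(X^i)\,V'(X^i)$, and a diffusion/second-derivative remainder of order $1/(N-1)$.

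Next I would pass to the limit in each term of that identity. The smoother terms converge by the $1$--Wasserstein convergence $d_1(\mu^N_\X,\mu_{\beta,V})\to 0$ together with the Euler--Lagrange equation \eqref{eulerlagrange}: for a player with $i/N\to q$, $X^i\to\gamma^q$ and $(h_1*\mu^{N,i}_\X)(X^i)\to H\mu_{\beta,V}(\gamma^q)=V'(\gamma^q)/\beta$, so the products on the right concentrate. The factor $\pi^2/3$ and the precise shape $m_{\beta,V}(\gamma^q)^2$ should emerge because the finite-$N$ identity, after dividing by $(N-1)$, carries a combinatorial factor: one is really summing over \emph{ordered} pairs and the local spacing near rank $i$ scales like $1/(N m_{\beta,V}(\gamma^q))$, so $\sum_{k:k\neq i}(X^i-X^k)^{-2}$ behaves like $(Nm)^2\sum_{j\neq 0} j^{-2} = (Nm)^2\cdot \pi^2/3$ — the $\pi^2/3 = \sum_{j\geq 1}2/j^2$ coming from summing over both sides of player $i$ in the bulk. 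I would make this rigorous not by a direct local-spacing estimate but by checking that the algebraic identity forces the constant: the coefficient $\beta/(\beta-\sigma^2)$ should drop out of combining $\frac{1}{2}|H\mu|^2$-type terms with the $V'$ term via \eqref{eulerlagrange}, and matching against the known answer $\pi^2\beta^2 m^2/8$ in the master equation \eqref{dysonmaster} (which was itself obtained by the same heuristic) provides a consistency cross-check.

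The main obstacle I anticipate is \textbf{uniform integrability near the singularity}: the quantity $(X^i-X^k)^{-2}$ blows up when two consecutive particles collide, and although the repulsion $\beta/\sigma^2$ with $\beta>\sigma^2$ guarantees that consecutive spacings have finite enough negative moments for the expectation to be finite, controlling these moments \emph{uniformly in $N$} and justifying the exchange of limit and expectation is delicate. I would handle this by invoking the moment and rigidity estimates for $\beta$--ensembles available in Blower \cite{gb1} (Theorems 4.4.1, 4.4.3(i), 5.4.3 are already cited), which give control on $d_1(\mu^N_\X,\mu_{\beta,V})$ and on local fluctuations, and by establishing an a priori bound $\sup_N \mathbb{E}\,(N-1)^{-2}\sum_{k:k\neq i}(X^i-X^k)^{-2} < \infty$ via a Lyapunov/integration-by-parts argument using the density \eqref{invariantmeasuregeneral} directly (integrating $(x^i-x^{i\pm1})^{-2}$ against the Vandermonde weight $(x^\ell-x^k)^{\beta/\sigma^2}$ converges precisely because $\beta/\sigma^2 > 1$). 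A secondary technical point is that the boundary ranks $q\in\{0,1\}$ require separate treatment since $m_{\beta,V}$ vanishes there and the bulk spacing heuristic degenerates; the identity-based approach should still deliver the (vanishing) right-hand side, but care is needed with the edge scaling.
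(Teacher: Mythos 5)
Your proposal is on the right track — integration by parts against the explicit invariant density to produce exact finite-$N$ identities, combined with $p$-Wasserstein convergence and the Euler--Lagrange equation \eqref{eulerlagrange} to pass to the limit, is indeed the backbone of the paper's proof — but there are two genuine gaps in the mechanism you describe.

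First, you propose applying the stationarity/IBP relation to the single test function $(h_1 * \mu^{N,i}_\x)(x^i)$. This produces (in effect) the paper's identities \eqref{ibp1}--\eqref{ibp2}, which give you, in the limit, $\mb{E}[V'(X^i)(h_1*\mu^{N,i}_\X)(X^i)] \to \beta(Hm_{\beta,V})^2(\gamma^q)$ and $\mb{E}[(h_1*\mu^{N,i}_\X)(X^i)^2] - \frac{\sigma^2}{\beta}\cdot\frac{\mb{E}(h_2*\mu^{N,i}_\X)(X^i)}{N-1} \to (Hm_{\beta,V})^2(\gamma^q)$. But this is underdetermined: you have two equations for the three unknown limits $\lim\mb{E}[(h_1)^2]$, $\lim\mb{E}[V'\cdot h_1]$, $\lim\frac{\mb{E}[h_2]}{N-1}$, and the first cannot be evaluated by weak convergence alone since $[(h_1*\mu^{N,i}_\X)(X^i)]^2$ itself carries an order-one local contribution proportional to $m_{\beta,V}(\gamma^q)^2$. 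The paper closes the system by a \emph{separate} use of the invariance identity $\mu^N_{\beta,V}[\mc{L}_{\beta,V}(v^{N,i}_{\beta,V})]=0$ applied to the logarithmic function $v^{N,i}_{\beta,V}(\x) = \frac{V(x^i)}{2} - \frac{\beta}{2}(h_0*\mu^{N,i}_\x)(x^i)$ (i.e., the candidate value function, not $h_1*\mu^{N,i}_\x$). That choice is not interchangeable with yours: the drift-interaction term for $v^{N,i}_{\beta,V}$ produces sums of the form $\sum_{k\neq i}\sum_{\ell\neq i,k}\frac{1}{x^k-x^\ell}\frac{1}{x^i-x^k}$ which converge to the nonlocal quantity $H[m_{\beta,V}Hm_{\beta,V}](\gamma^q)$ — a term your test function would not generate. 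Also, applying the generator directly to $(h_1*\mu^{N,i}_\x)(x^i)$ brings in $\Delta_\x$, whose action produces cubic singularities $\sum_{k\neq i}(x^i-x^k)^{-3}$, which are strictly worse than what you are trying to control.

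Second, and more seriously, you do not have a rigorous route to the constant $\pi^2/3$. You offer the local-spacing heuristic $\sum_{j\neq 0}j^{-2}=\pi^2/3$ and then explicitly decline to make it rigorous, falling back on ``matching against the known answer in the master equation'' — but the master equation's coefficient $\pi^2\beta^2 m^2/8$ was itself obtained by the same principle you are trying to prove, so this is circular. The actual ingredient in the paper is the Hilbert transform product rule (item (2) of Theorem 2.2 in Carton--Lebrun, quoted as \eqref{productformula}):
\begin{equation*}
\pi^2 m^2(x) = (Hm)^2(x) - 2H[mHm](x).
\end{equation*}
Once the invariance identity has produced the limiting combination $(Hm_{\beta,V})^2(\gamma^q) - 2H[m_{\beta,V}Hm_{\beta,V}](\gamma^q)$, this deterministic identity converts it to $\pi^2 m_{\beta,V}(\gamma^q)^2$ with no microscopic/spacing input whatsoever — which is in fact what the paper's Remark \ref{sinekernel} emphasizes is a distinctive feature of this argument in the bulk. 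Without this identity your proof does not close. Your uniform-integrability concerns are real but secondary; the paper addresses them via the bound \eqref{finiteh2} (established in the proof of Theorem \ref{verificationthm}) together with the fact that the other terms in the limiting linear relation converge.
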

	\begin{rem} \label{gorinshkolnikov}
		Proposition \ref{conjecture} is an extension of the guess in Remark $3.9$ of Gorin--Shkolnikov \cite{vgmsedge} 
		to the case of a uniformly convex potential $V(x)$ and to any convergent sequence of indices, i.e., both at the edge ($q=0,1$) and in the bulk ($q \in (0,1)$).
		Indeed, our result implies their guess upon taking $V(x)=x^2/2$, $\sigma=1, q=1, \lim_{N \to \infty} X^N = \gamma^1 = \sqrt{2\beta}$. 
	Moreover, this result has other related applications; for example, 
	it immediately implies 	Lemma 3.3 of 
	Gorin--Shkolnikov \cite{vgmsedge}, an innocuous statement
	which nevertheless can take some effort to prove.
	We have not yet found the calculation of the limit \eqref{singularcostlimit} explicitly in the RMT literature,
so Proposition \ref{conjecture} illustrates how
		the $N$ Nash System \eqref{dysonnash}
		can readily lead to a basic application in random matrix theory
		that is interesting in its own right.
	But experts of Calogero-Moser-Sutherland models likely
	know how to compute the mean of the ``$1/r^2$-statistic", perhaps
	in the manner we suggest in Remark \ref{averageindex} below. Proposition \ref{conjecture} is difficult because
		one cannot exploit the algebraic identities \eqref{keyidentity1}, \eqref{keyidentity2} that occur upon averaging. 
	\end{rem}
	\begin{rem} \label{sinekernel}
		For some choices of parameters, one can compute expressions like \eqref{singularcostlimit} directly.
		To sketch this for the archetype choice $V(x)=x^2/2, \beta=2, \sigma = 1$ corresponding to the mean field scaled
		GUE ensemble \eqref{density}, 
		we can use the asymptotic formula for the sine kernel (see, e.g., Section 3.5 of Anderson-Guionnet-Zeitouni \cite{agz1}) to get 
		$$
		\frac{\mb{E}(h_2 * \mu^{N,i}_\X)(X^i)}{N-1}  	 
		\approx  m_2(\gamma^q)^2 \cdot \int_\R \frac{1}{y^2} \left ( 1 - \left ( \frac{\sin(\pi y)}{\pi y} \right )^2  \right ) dy 
		= 2\frac{\pi^2}{3} m_2(\gamma^q)^2.
		$$
		This observation emphasizes that \emph{some microscopic input is relevant to the limit \eqref{singularcostlimit}} (and thus also to our main theorems below), 
		but an interesting aspect of the proof of Proposition \ref{conjecture} is that we do not need to rely on such detailed local limit behavior
		in the bulk, $q \in (0,1)$. 
	\end{rem}

\bt \label{mainresult}
	Fix $\beta>\sigma^2>0$ and let $V: \R \to \R$ be twice continuously differentiable of at most polynomial growth and satisfying
	$V''(x) \geq c_V > 0$ for all $x \in \R$ for some constant $c_V$.
	Recall the definition \eqref{invariantmeasuregeneral} of $\mu_{\beta,V}^N$ and characterization \eqref{eulerlagrange} of $\mu_{\beta,V}$. 
	Then for any (deterministic) sequence $i=i(N)$ of indices with $\lim_{N \to \infty} i/N = q \in [0,1]$, we have the following asymptotic contributions:
	\begin{enumerate}
		\item The self--interaction term contributes
			\begin{equation} \label{controlcontributiongeneral}
				\lim_{N \to \infty} \mu^N_{\beta,V} \left [ (\partial_i v^{N,i}_\beta)^2 \right ] 
				= \frac{\pi^2 \beta^2 \sigma^2}{12(\beta-\sigma^2)} m_{\beta,V}(\gamma^q)^2 + |\partial_xU_\beta(\gamma^q,\mu_{\beta,V})|^2.
			\end{equation}

		\item The drift--interaction term contributes
			$$
			\lim_{N \to \infty} \mu^N_{\beta,V} \left [ \sum_{k:k\neq i} \partial_k v^{N,k}_\beta \partial_k v^{N,i}_\beta  \right ]
			= \frac{\pi^2 \beta^2 \sigma^2}{12(\beta-\sigma^2)} m_{\beta,V}(\gamma^q)^2 
			+ \int_\R \partial_\mu U_\beta(\gamma^q,\mu_{\beta,V})(z) \partial_xU_\beta(z,\mu_{\beta,V}) \mu_{\beta,V}(dz).
			$$
		\item The diffusion term contributes
			$$
			\lim_{N \to \infty} \mu^N_{\beta,V} \left [ - \frac{\sigma^2}{2(N-1)} \Delta_\x v^{N,i}_\beta   \right ] 
			= - \frac{\pi^2 \beta^2 \sigma^2}{6(\beta-\sigma^2)} m_{\beta,V}(\gamma^q)^2.
			$$

	\end{enumerate} 
	Thus, the local contributions cancel and so, integrated against $\mu_{\beta,V}^N$, the sequence of $i$th equations from the $N$--Nash system \eqref{dysonnash} with $C_2 = \beta ( \frac{3}{2} \beta - 2\sigma^2)/4$ in the definition
\eqref{statecost} of $F^{N,i}(\x)$ and with solution $(v^{N,k}_\beta(\x),\lambda^{N,k}_\beta)_{k=1}^N$ 
	converges to the Voiculescu-Wigner master equation \eqref{dysonmaster} with solution $(U_\beta(x,\mu), \frac{\beta}{4})$
	at $(\gamma^q, \mu_{\beta,V}) \in \R \times \mc{P}^2_2(\R)$.
\et
\subsection*{The Coulomb Game}
By a slight abuse in this section, we use the same symbols
for the analogous objects defined on $\RR^2$.
Fix constants $C_1, C_2 > 0$ and for $1 \leq i \leq N$ let 
\begin{equation}
	\label{2dstatecost}
	F^{N,i}(\bz) := \frac{|z^i|^2}{8} + 
	C_1 \int \int_{ \substack{w,u \in \RR^2 \\ w \neq u}} \frac{2}{D^2(z^i-w,z^i-u)} \mu^{N,i}_\bz(dw) \mu^{N,i}_\bz(du) + C_2 \frac{(h_2 * \mu^{N,i}_\bz)(z^i)}{N-1},
	 \ \ \ \bz \in \mc{D}^N,
\end{equation}
where we recall $D(\xi,\eta)$ is the diameter of the circumcircle of the triangle
determined by $\xi,\eta,$ and $(0,0)$ in $\RR^2$. 
Consider the ergodic $N$-Nash system on $\mc{D}^N$: 
\begin{equation} \label{2dNnash}
- \frac{\sigma^2}{2(N-1)} \Delta_\bz v^{N,i}(\bz) + \sum_{k: k \neq i} \langle \nabla_{z^k} v^{N,k}(\bz), \nabla_{z^k} v^{N,i}(\bz) \rangle 
+ \frac{1}{2} | \nabla_{z^i} v^{N,i}(\bz) |^2 = F^{N,i}(\bz) - \lambda^{N,i}, \ \ 1 \leq i \leq N,
\end{equation}
The game theoretic counterpart of these equations is the \emph{closed loop} model of the Coulomb game,  detailed in Section \ref{coulombgame}.
Lemma \ref{2dNnashprop} shows that if we can write $C_1=\beta^2/8$ and $C_2 = 3\beta^2/8$ for some  $\beta \in \RR$, then the ergodic value pairs 
\begin{equation} \label{2dNnashsoln}
\begin{cases}
	v^{N,i}_\beta(\bz) := \frac{|z^i|^2}{4} - \frac{\beta}{2} (h_0 * \mu^{N,i}_\bz)(z^i), & \bz \in \mc{D}^N \\
	\lambda^{N,i}_\beta := \frac{\beta}{4} + \frac{\sigma^2}{2(N-1)}
\end{cases}
\end{equation}
	form a classical solution to the $N$--Nash system \eqref{2dNnash} on $\mc{D}^N$. 
		Lemmas \ref{2dpotentialgame}, \ref{2dehjb} similarly solve the \emph{open loop} model of the Coulomb game
given the relationships $C_1=\beta^2/8$ and $C_2 = \beta^2/4$. 

\begin{figure}[h!] 
		\centering
		\includegraphics[width=.32 \textwidth]{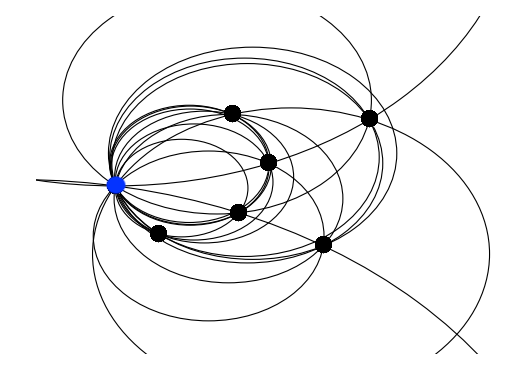}
		\includegraphics[width=.32 \textwidth]{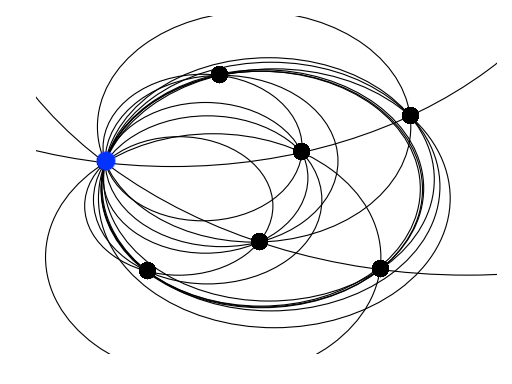}
		\includegraphics[width=.32 \textwidth]{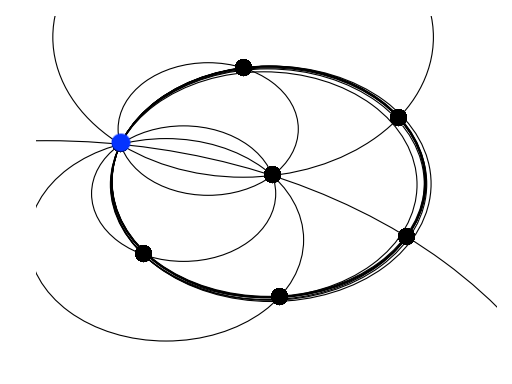}		\caption{A simulation of planar Coulomb dynamics with the circumcircles from the cost function \eqref{2dstatecost} for a given player (in blue).  The first plot shows players in an arbitrary configuration, and the next two plots show the evolution towards equilibrium. Note in particular the formation of the triangular/Abrikisov lattice; see Serfaty \cite{serfaty2015coulomb}.}
		\label{onlyfigure2}
\end{figure}
	
	Just as before,
	we can write $v^{N,i}(\bz) = U(z^i,\mu^{N,i}_\bz)$, where 
	\begin{equation} \label{2dmasterfield}
U_\beta(z, \mu) := \frac{|z|^2}{4} - \frac{\beta}{2} \int_{\R^2} \log |z- w| \mu(dw), \ \  (z,\mu) \in \RR^2 \times \calP_2(\RR^2).
\end{equation}
Lemma \ref{2dmasterequationcalc} shows that the pair $(U_\beta(z,\mu),\frac{\beta}{4})$
solves the \emph{Coulomb master equation} 
\begin{equation} \label{2dmasterequation}
\begin{aligned}
	\int_{\R^2} \langle \partial_\mu U(z, \mu)(w) , (\nabla_z U)(w, \mu) \rangle \mu(dw) & + \frac{1}{2} | (\nabla_z U)(z, \mu) |^2   \\
	& = 
	\frac{|z|^2}{8}  + \frac{\beta^2}{8} \int \int_{ \substack{w,u \in \RR^2 \\ w \neq u}}\frac{2}{D^2(z-w,z-u)} \mu(dw) \mu(du) - \lambda.
\end{aligned}
\end{equation}

Despite the limitations of the existing literature on planar Coulomb dynamics
as compared to the one dimensional case,
we are nevertheless able to obtain results concerning ``convergence
of equations'' 
as generally as for the Dyson game. Fix $\beta, \sigma>0$ and let $V : \R^2 \to \R$ be a twice continuously differentiable and $c_V$-uniformly convex function for some constant $c_V>0$. 
Let $\bZ \in \mc{D}^N_{ordered}$ be distributed according to the generalized 
$\beta/\sigma^2$-ensemble on $\mc{D}^N_{ordered}$:
\begin{equation} \label{2dorderedensemble}
\mu^N_{\beta,V}(d\bz) = m_{\beta,V}(\bz) d \bz : = \frac{N!}{Z^N_{\beta,V}} \cdot
\prod_{1 \leq k < \ell \leq N} |z^\ell - z^k|^{\beta/\sigma^2} \cdot 
\exp \left \{   - \frac{N-1}{\sigma^2} \sum_{i=1}^N V(z^i) \right \} \cdot \mathbbm{1}_{\mc{D}^N_{ordered}}(\bz) d\bz
\end{equation}
where $Z^N_{\beta,V}<\infty$ is a normalization constant. We note that at least formally
$\mu^N_{\beta,V}$ is the invariant distribution of the \emph{$\beta/\sigma^2$-planar Coulomb dynamics}
\begin{equation}
	\label{2dgeneraldynamics}
	dZ^i_t = \left [ \frac{\beta}{2} (h_1 * \mu^{N,i}_{\bZ_t})(Z^i_t) - \frac{\nabla_z V(Z^i_t)}{2} \right ] dt + \frac{\sigma}{\sqrt{N-1}} dB^i_t, \ \ \ 1 \leq i \leq N.
\end{equation}
For existence and uniqueness of \eqref{2dgeneraldynamics},
see Section 1.4.4 of Bolley-Chafa\"{i}-Fontbona \cite{bolley2018dynamics},
Theorem 2.1 of Liu-Yang \cite{liu2016propagation},
and the discussion around equations (1.5) and (1.6) of Lu-Mattingly \cite{lu2019geometric}.
\begin{rem}
	We do not expect the dynamics \eqref{2dgeneraldynamics} to respect  
	a given linear ordering $\prec$ of $\RR^2$, e.g., the 
	case $N=2$ rules this out for the \emph{spiral ordering} introduced in Example \ref{applicationofspiral} below.
\end{rem}

Now we know (see, e.g., Section 2.6 of Serfaty's lectures \cite{serfaty2015coulomb}
and Corollary 1.7 of Chafa\"{i}-Hardy-Ma\"{i}da \cite{chafai2018concentration}) 
that there exists a compactly supported measure $\mu_{\beta,V} \in \mathcal{P}(\R^2)$ with density $m_{\beta,V}(z) = \Delta V(z)/(\pi \beta)$ on its support
such that $d_1( \mu^{N,i}_\bZ, \mu_{\beta,V}) \overset{N \to \infty}{\to} 0$ almost surely. Moreover, $\mu_{\beta,V}$ satisfies 
the Euler-Lagrange equation
\begin{equation}
	\label{2deulerlagrange}
\beta \int_{\R^2} \frac{z-w}{|z-w|^2} m_{\beta,V}(w) dw - \nabla_z V(z) = 0, \ \ z \in \text{supp} \mu_{\beta,V}.
\end{equation}
\begin{ex} \label{ginibre}
	The prototypical case occurs when taking $V(z) = |z|^2/2$.
	The density of the equilibrium measure is then a constant
$m_{\beta,V}(z) = 2/\pi \beta$ supported on the ball $B(\sqrt{\beta/2})$. 
If further we set $\sigma=1$ and $\beta=2$, then
	 $\mu_{\beta,V}^N(\bz)$ coincides with the density of the eigenvalues of a $N \times N$ random
	 matrix belonging to the \emph{complex Ginibre ensemble}, 
	which is known to be determinantal and exactly solvable (see, e.g., Meckes-Meckes \cite{meckes2015rate} and references therein).
\end{ex}

\bt \label{big2dtheorem}
Fix $\beta, \sigma>0$.
Let $V: \RR^2 \to \RR$ be twice continuously differentiable, of at most polynomial growth, and $c_V$-uniformly convex for some $c_V>0$. 
Let $\bZ = (Z^1 \prec \cdots \prec Z^N)$ be distributed according to the ensemble $\mu^N_{\beta,V}$ of \eqref{2dorderedensemble}.
Let $i=i(N)$ be a (deterministic) sequence of indices such that $Z^i$ converges to the
macroscopic location $\gamma \in \supp \mu_{\beta,V}$ in $L^p$ for all $p \geq 1$.
Recall $D(\xi,\eta)$ is the diameter of the circumcircle of the triangle
formed by $\xi,\eta,$ and $(0,0)$ in $\RR^2$.
Then we have 
\begin{equation}
	\label{diameterlimit}
	\lim_{N \to \infty} \mathbb{E} \int \int_{ \substack{w,u \in \RR^2 \\ w \neq u}}\frac{\mu^{N,i}_\bZ(dw) \mu^{N,i}_{\bZ}(du)}{D^2(Z^i-w,Z^i-u)}  = 
\int \int_{ \substack{w,u \in \RR^2 \\ w \neq u}}\frac{\mu_{\beta,V}(dw) \mu_{\beta,V}(du)}{D^2(\gamma-w,\gamma-u)}.
\end{equation}
\begin{equation}
	\label{2dreciprocalgaplimit}
	\lim_{N \to \infty} \frac{\mb{E}(h_2 * \mu^{N,i}_\bZ)(Z^i)}{N-1} = 
\lim_{N \to \infty} \frac{1}{(N-1)^2} \mb{E} \sum_{k:k\neq i} \frac{1}{|Z^i - Z^k|^2} = 0. 
\end{equation}
Hence, integrated against $\mu^N_{\beta,V}$, 
the sequence of $i$th equations from the 
$N$-Nash system \eqref{2dNnash} with $C_1 = \beta^2/8$, any $C_2 >0$ in the definition
\eqref{2dstatecost} of $F^{N,i}(\bz)$, and
with solution $(v^{N,k}_\beta(\bz), \lambda^{N,k}_\beta)_{k=1}^N$ converges to the Coulomb master equation \eqref{2dmasterequation} with the solution $(U_\beta(z,\mu), \frac{\beta}{4})$
at the point $(\gamma, \mu_{\beta,V}) \in \RR^2 \times \mc{P}^2_2(\RR^2)$. 
\et

\begin{cor} \label{2depsilonnash} 
Let $C_1 = \beta^2/8$ for some $\beta >0$ and $C_2 \geq 3\beta^2/8$ in the definition
\eqref{2dstatecost} of $F^{N,i}(\bz)$. 
Then the planar Coulomb dynamics \eqref{2dgeneraldynamics}
with $V(z) = |z|^2/2$ form both open and closed loop $\epsilon_N$-Nash equilibria
for the Coulomb game, for some $\epsilon_N \to 0$ as $N \to \infty$.
\end{cor}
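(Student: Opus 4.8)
The plan is to combine the verification theorems for the Coulomb game (in both its closed and open loop versions) with the convergence result of Theorem~\ref{big2dtheorem} specialized to the quadratic potential $V(z)=|z|^2/2$. Recall from Lemma~\ref{2dNnashprop} and its open-loop analogue (Lemmas~\ref{2dpotentialgame}, \ref{2dehjb}) that the field $U_\beta(z,\mu)$ of \eqref{2dmasterfield} solves the closed loop $N$-Nash system \eqref{2dNnash} exactly when $C_1=\beta^2/8$ and $C_2=3\beta^2/8$, and solves the open loop system when $C_1=\beta^2/8$ and $C_2=\beta^2/4$; in either case the Nash-optimal trajectories are exactly the planar Coulomb dynamics \eqref{2dgeneraldynamics} with $V(z)=|z|^2/2$, i.e. \eqref{2dgeneralynamics} with $\nabla_z V(z)=z$. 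So for the \emph{exact} coefficient values the dynamics form genuine ($\epsilon_N=0$) Nash equilibria. The point of the Corollary is to upgrade to \emph{arbitrary} $C_2\ge 3\beta^2/8$, where the solution fields no longer solve the $N$-Nash system on the nose, and to do so in the asymptotic $\epsilon_N\to 0$ sense for both information structures simultaneously.

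First I would fix the candidate equilibrium to be the Coulomb dynamics with $V(z)=|z|^2/2$, which by Example~\ref{ginibre} has invariant law $\mu^N_{\beta,V}$ supported near $B(\sqrt{\beta/2})$, and compute the ergodic cost incurred by a given player $i$ when all players follow these dynamics: this is $\lambda^{N,i}_\beta$ up to the error coming from replacing the true $C_2$ by the ``correct'' value $3\beta^2/8$ (resp. $\beta^2/4$ in the open loop case). The key observation is \eqref{2dreciprocalgaplimit}: the extra cost term $C_2\,(h_2*\mu^{N,i}_{\bZ})(Z^i)/(N-1)$ has expectation tending to $0$ as $N\to\infty$ under the invariant ensemble, uniformly enough in the index $i=i(N)$ (using $c_V$-convexity with $c_V=1$). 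Next I would bound the cost of an arbitrary admissible deviation by player $i$: since the state cost $F^{N,i}$ only \emph{increases} by the nonnegative term $(C_2-3\beta^2/8)\,(h_2*\mu^{N,i}_{\bz})(z^i)/(N-1)$ relative to the solvable model, and since in the solvable model $U_\beta$ is an exact value function (so no unilateral deviation helps), the deviating player's cost is at least $\lambda^{N,i}_\beta - o(1)$; meanwhile following the equilibrium costs at most $\lambda^{N,i}_\beta + o(1)$ by the previous step. Subtracting gives the $\epsilon_N$-Nash inequality with $\epsilon_N\to 0$. The open loop case is handled identically, using Lemmas~\ref{2dpotentialgame}, \ref{2dehjb} and the open-loop verification theorem in place of their closed loop counterparts, together with the same vanishing \eqref{2dreciprocalgaplimit} (now the ``correct'' shift is $\beta^2/4\le 3\beta^2/8$, which is still $\le C_2$, so the extra term remains nonnegative).

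The main obstacle I anticipate is making the estimates \emph{uniform} over admissible controls and over the index $i=i(N)$: Theorem~\ref{big2dtheorem} gives the limit \eqref{2dreciprocalgaplimit} only in expectation under the \emph{invariant} ensemble $\mu^N_{\beta,V}$ along the equilibrium trajectory, so to control the cost \emph{after a deviation} I must argue that the extra cost term, being nonnegative, can simply be dropped from below (it only hurts the deviator), while from above it is evaluated along the equilibrium where \eqref{2dreciprocalgaplimit} applies. This asymmetry is exactly why we need $C_2\ge 3\beta^2/8$ (resp. $\ge\beta^2/4$) rather than equality, and why only a one-sided ($\epsilon_N$-Nash) rather than exact conclusion is available. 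A secondary technical point is confirming that the ergodic costs and the verification arguments of Section~\ref{coulombgame} are robust to the perturbation of $F^{N,i}$ by a nonnegative term — this should follow because adding a nonnegative running cost can only raise the value, and the comparison/verification inequalities used there are one-sided in the right direction. I would also need the limiting cost $\lambda^{N,i}_\beta=\beta/4+\sigma^2/2(N-1)$ to be asymptotically independent of $i$, which it manifestly is, so that a single $\epsilon_N$ works for all players at once.
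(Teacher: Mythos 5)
Your proposal is correct and follows essentially the same route as the paper: establish exact Nash optimality of the Coulomb dynamics for the solvable coefficients $C_2=3\beta^2/8$ (closed loop) and $C_2=\beta^2/4$ (open loop) via a two-dimensional verification theorem, then sandwich the cost with general $C_2$ between the solvable cost and the solvable cost plus $\epsilon_N:=(C_2-3\beta^2/8)\,\mathbb{E}(h_2*\mu^{N,i}_{\bZ})(Z^i)/(N-1)$, which vanishes by \eqref{2dreciprocalgaplimit}. The one-sided use of the nonnegative excess term (dropped from below for the deviator, evaluated along the invariant ensemble from above) is exactly the paper's argument.
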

\begin{rem}
	Given the lengthscale $1/\sqrt{N}$ of the expected gap size between bulk players in two dimensions,
	one expects $\epsilon_N \lesssim 1/N$; however,
	we do not yet have a rigorous proof of this rate. 
\end{rem}

\begin{ex} \label{applicationofspiral}
Although we are not able to be as explicit as in the one dimensional case
on the specific conditions of the sequence of indices $i=i(N)$ in the general setting of 
Theorem \ref{big2dtheorem}, 
conditions can still be articulated explicitly if we let
$\prec$ denote the \emph{spiral ordering} of $\CC \approx \RR^2$, introduced by Meckes-Meckes \cite{meckes2015rate}, which is defined as follows.
Let $0 \in \CC$ be the smallest element and for nonzero $w,z \in \CC$, 
write $w \prec z$ if either 
\begin{enumerate}
	\item $\lfloor \sqrt{N} |w| \rfloor < \lfloor \sqrt{N} |z| \rfloor$
	\item $\lfloor \sqrt{N} |w| \rfloor = \lfloor \sqrt{N} |z| \rfloor$ and $\arg w < \arg z$
	\item $\lfloor \sqrt{N} |w| \rfloor = \lfloor \sqrt{N} |z| \rfloor$, $\arg w = \arg z$, and $|w| \geq |z|$. 
\end{enumerate}
Here, we take $\arg z \in (0, 2\pi]$, adopting the same convention as \cite{meckes2015rate}, and also $\lfloor x \rfloor$
denotes the floor function giving the largest integer less than or equal to $x \in \R$.

For the archetype case of the complex Ginibre ensemble given by the choices $V(z) = |z|^2/2$, $\sigma=1$, and $\beta=2$,
Meckes-Meckes \cite{meckes2015rate} articulate 
concentration results using
predicted locations for \emph{most of} the players, i.e., 
for all but $M := N- \lfloor \sqrt{N} \rfloor^2$ players. 
Letting $1 \leq k \leq N-M = \lfloor \sqrt{N} \rfloor^2$, the predicted location for $Z^k$ is given by
\begin{equation}
	\label{predictedlocation}
	\tilde{\lambda}^k := \frac{\lceil \sqrt{k} \rceil - 1}{\sqrt{N}}
	\exp \left \{ 2 \pi \bi \cdot \frac{ k-(\lceil \sqrt{k} \rceil - 1)^2}{(2 \lceil \sqrt{k} \rceil - 1)} \right \}
\end{equation}
where $\bi := \sqrt{-1}$. In words,
the sequence $\{ \tilde{\lambda}^k \}_{k=1}^{N-M}$, which is naturally
ordered according to the spiral ordering $\prec$, starts at $0$, then runs through
$1/\sqrt{N}$ times the third roots of unity,
then $2/\sqrt{N}$ times the fifth roots of unity,
then $3/\sqrt{N}$ times the seventh roots of unity, and so on. 
Notice that this does not determine any predicted locations in the outermost annulus
$\{ z \in \RR^2 :  \sqrt{ 1- \frac{M}{N} } \leq |z| \leq 1 \}$. 

We can now recite the concentration results: if $k$ satisfies $\lceil \sqrt{k} \rceil \leq \sqrt{N} - \sqrt{\log N}$, then we have for the ordered ensemble $\bZ$ the moment estimate
\begin{equation}
	\label{ginibremomentestimate}
\mb{E}| Z^k - \tilde{\lambda}^k |^p \leq \frac{p C^p \Gamma(\frac{p+1}{2})}{N^{p/2}}
\end{equation}
for some universal constant $C>0$ and for all $p \geq 1$ (see the first estimate in the proof of Theorem 1 in Meckes-Meckes \cite{meckes2015rate}).

Hence we finally arrive at the point of this example.
Consider a sequence of indices $i = i(N)$ such that 
$1 \leq i(N) \leq \lfloor \sqrt{N} \rfloor^2$, 
$\lceil \sqrt{i(N)} \rceil \leq \sqrt{N}-\sqrt{ \log N}$, and finally 
\begin{equation}
	\label{2dindexconditions}
\lim_{N \to \infty} i(N)/N = q, \ \ \ 
\lim_{N \to \infty} \left (\lceil \sqrt{i(N)} \rceil - \sqrt{i(N)} \right ) = \theta,
\end{equation}
for some $q, \theta \in [0,1)$ (notice $1$ is omitted).
Write $\gamma^{q,\theta} := q \cdot \exp \{2 \pi \bi  \theta \}$.
Then we have that
$\lim_{N \to \infty} \tilde{\lambda}^{i(N)} = \gamma^{q,\theta}$ and 
so by the moment estimate \eqref{ginibremomentestimate} we have that
$Z^i$ converges to $\gamma^{q,\theta} \in B(1)$. 
\end{ex}

\subsection*{Higher dimensional games with logarithmic interactions}

Although logarithmic interactions are regarded as ``Coulomb" only 
in dimensions $d=1,2$ (see, e.g., Section 1.4 and Chapter 15 of Forrester \cite{pf1}), 
our main construction and calculations continue to hold in dimension $d \geq 3$.
Indeed, we could have pursued a more unified treatment of many results for general $d \geq 1$, at the cost of losing some emphasis on some key distinctions between the $d=1$ and $d \geq 2$ cases.
The corresponding results for $d \geq 3$ are most similar to the $d=2$ case:
if we let (for this short section only) $\bz=(z^1, \ldots, z^N)$ be a vector
of components $z^i \in \RR^d$, $1 \leq i \leq N$, 
we may define $F^{N,i}(\bz)$ and $v^{N,i}_\beta(\bz)$ exactly as in \eqref{2dstatecost} and \eqref{2dNnashsoln}, respectively.
Then we can calculate just as for Lemma \ref{2dNnashprop} that if we can write 
$C_1=\beta^2/8$ and $C_2=
\beta(\frac{3}{2}\beta+(d-2)\sigma^2)/4$ for some  $\beta \in \RR$, then the ergodic value pairs $(v^{N,i}_\beta(\bz), \lambda^{N,i}_\beta)_{i=1}^N$
with $\lambda^{N,i}_\beta := \beta/4+d\frac{\sigma^2}{4(N-1)}$ will solve the associated
$N$ Nash system of the form \eqref{2dNnash}.
The open loop case can similarly be solved explicitly as in Section \ref{coulombgame}
given the choices $C_1=\beta^2/8$ and { $C_2=\beta(\beta+2(d-2)\sigma^2)/4$.}
Notice for either model of player information, 
the formulas for $C_1$ are the same for any $d \geq 2$. 
Finally, one can endeavor to formulate analogs of Theorem \ref{big2dtheorem} and Corollary \ref{2depsilonnash}
for $d \geq 3$, confirming the open and closed loop models still converge together.

\section{$N$ player formulation of the Dyson Game} \label{dysongame}
\subsection*{Closed loop model}
\bd \label{admissibility}
A function $\boldsymbol{\phi} : \mc{W}^N \to \R^N$ is \emph{admissible} if for every $\x_0 \in \mc{W}^N$, 
there exists a unique strong solution $(\X_t )_{t \geq 0} = (\X^{\boldsymbol{\phi}}_t)_{t \geq 0}$ to the stochastic differential equation
\begin{equation} \label{Ndynamics}
			d\X_t = \boldsymbol{\phi}(\X_t) dt + \frac{\sigma}{\sqrt{N-1}} d\mathbf{W}_t, \ \ \X_0 = \x_0, \ \
\sigma \geq 0 
		\end{equation} 
		that remains in $\overline{\mc{W}^N}$ and satisfies the integrability condition
		\begin{equation} \label{admissibleintegrability}
			\mb{E} \int_0^T \left [ \Vert \boldsymbol{\phi}(\X_t)\Vert^2
			+ \Vert \X_t \Vert^2 + \sum_{1 \leq k < \ell \leq N} \frac{1}{(X^\ell_t - X^k_t)^2} 
			\right ] dt < \infty, \ \text{for all} \ \  T > 0.
		\end{equation} 
		We denote the class of such feedbacks by $\mc{A}^{(N)}$.
\ed
The class $\mc{A}^{(N)}$ is fairly rich; 
indeed, Theorem 2.2 of C\'{e}pa--L\'{e}pingle \cite{ecdl1} offers
general solvability of \eqref{Ndynamics} under the state constraint, while the stronger condition \eqref{admissibleintegrability} will need to be checked (see the proof
of Theorem \ref{verificationthm}). Also, the constraint ``$\X_t \in \overline{\mc{W}^N}$ for all $t > 0$" is 
consistent with the framework suggested by the early work \cite{jlpl1} of Lasry--Lions for state--constrained problems and is necessary if $C_2 \leq 0$,
but this condition will be \emph{forced} by the form of singular cost if $C_2>0$.

The \emph{closed loop model for the $N$ player Dyson game} can be formulated as follows.
Fix any feedback profile $\boldsymbol{\phi} \in \mc{A}^{(N)}$ and $C_2 \in \R$.
Interpreting the components $(X^i_t)_{t \geq 0}$ of \eqref{Ndynamics} as \emph{players}, we accordingly define for every $1 \leq i \leq N$
the $i$th player's admissible class $\mc{A}^i(\boldsymbol{\phi}^{-i})$ to be the collection of $\psi:\R^N \to \R$ 
such that $(\psi,\boldsymbol{\phi}^{-i}):= (\phi^1, \ldots, \phi^{i-1},\psi,\phi^{i+1}, \ldots, \phi^N) \in \mc{A}^{(N)}$. 
Recall the definition \eqref{statecost} of the state--cost $F^{N,i}(\bx)$.
Then the search for Nash equilibria in the closed loop model requires each player $i$, $1\leq i \leq N$, to minimize the ergodic cost
$$
J^{N,i}(\psi |\x_0, \boldsymbol{\phi}^{-i}) := \limsup_{T \to \infty} \frac{1}{T} \mb{E} \int_0^T
\left [ \frac{1}{2} \psi(\X_t)^2+ F^{N,i}(\X_t) \right] dt
$$ 
over deviations $\psi \in \mc{A}^i(\boldsymbol{\phi}^{-i})$, 
subject to $(\X_t)_{t \geq 0} = (\X^{(\psi, \boldsymbol{\phi}^{-i})}_t)_{t \geq 0}$ satisfying \eqref{Ndynamics} with $\X_0 = \x_0 \in \mc{W}^N$.  
\bd \label{Nnashequilibrium}
A feedback profile $\boldsymbol{\phi}^* = (\phi^{*1}, \ldots, \phi^{*N}) \in \mc{A}^{(N)}$
is a \emph{closed loop Nash equilibrium over classes $\mc{A}^i \subset \mc{A}^i(\boldsymbol{\phi}^{*-i})$, $1 \leq i \leq N$}, 
if for all $1 \leq i \leq N$, $\phi^{*i} \in \mc{A}^i$ and for every $\x_0 \in \mc{W}^N$, we have
$$\inf_{\psi \in \mc{A}^i} J^{N,i}(\psi | \x_0, \boldsymbol{\phi}^{*-i})=J^{N,i}(\phi^{*i} | \x_0 , \boldsymbol{\phi}^{*-i}).$$
\ed
 
	We now solve explicitly the $N$--Nash system \eqref{dysonnash} for the closed loop Dyson game, 
	the system of $N$ ergodic Hamilton--Jacobi--Bellman (HJB) equations associated to the search for a Nash equilibrium (compare with equation (1) of \cite{cdll1} and see Section 2.5.3 of \cite{bible1} for some intuition).
	\begin{lem}
		 \label{Nsubsolution}
	Assume $\sigma \geq 0$ and that the coefficient $C_2$ of \eqref{statecost} satisfies $C_2 \geq -\sigma^4/6$,
	so we may write $C_2 = \beta (\frac{3}{2}\beta - 2\sigma^2)/4$ for some $\beta \in \R$. 
	Then the pairs $(v_\beta^{N,i}(\x), \lambda^{N,i}_\beta)_{i=1}^N$ defined in
	\eqref{ehjbsolution} 
	form a classical solution to the $N$--Nash system \eqref{dysonnash} on $\mc{W}^N$.
\end{lem}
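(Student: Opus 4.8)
The plan is a direct verification: substitute the candidate \eqref{ehjbsolution} into the left-hand side of the $N$--Nash system \eqref{dysonnash} and show that, after two elementary rational--function identities, everything collapses to $F^{N,i}(\x) - \lambda^{N,i}_\beta$. Since each $v^{N,i}_\beta$ is a finite sum of $(x^i)^2/4$ and terms $\log|x^i - x^k|$, whose arguments are nonzero on the open Weyl chamber $\mc{W}^N$, the candidate is real--analytic there, so it suffices to check the equation pointwise on $\mc{W}^N$.

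First I would record the derivatives. Abbreviating $S_1 := \sum_{k:k\neq i}(x^i - x^k)^{-1}$ and $S_2 := \sum_{k:k\neq i}(x^i-x^k)^{-2}$, one gets
$$
\partial_i v^{N,i}_\beta = \frac{x^i}{2} - \frac{\beta}{2(N-1)} S_1, \qquad
\partial_k v^{N,i}_\beta = \frac{\beta}{2(N-1)}\cdot\frac{1}{x^i - x^k} \quad (k \neq i),
$$
and, differentiating once more, $\Delta_\x v^{N,i}_\beta = \tfrac12 + \tfrac{\beta}{N-1} S_2$ (the $\partial_i^2$ and $\sum_{k\neq i}\partial_k^2$ contributions each yield $\tfrac{\beta}{2(N-1)}S_2$). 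Hence the self--interaction term is $\tfrac12(\partial_i v^{N,i}_\beta)^2 = \tfrac{(x^i)^2}{8} - \tfrac{\beta x^i}{4(N-1)}S_1 + \tfrac{\beta^2}{8(N-1)^2}S_1^2$, the diffusion term is $-\tfrac{\sigma^2}{4(N-1)} - \tfrac{\sigma^2\beta}{2(N-1)^2}S_2$, and the drift--interaction term equals $\tfrac{\beta}{4(N-1)}\sum_{k\neq i}\tfrac{x^k}{x^i - x^k} - \tfrac{\beta^2}{4(N-1)^2}\sum_{k\neq i}\tfrac{1}{x^i-x^k}\sum_{\ell\neq k}\tfrac{1}{x^k-x^\ell}$.

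The two identities to invoke are the ones recorded below as \eqref{keyidentity1}, \eqref{keyidentity2}: the telescoping identity $\sum_{k\neq i}\tfrac{x^k}{x^i-x^k} = -(N-1) + x^i S_1$ (write $x^k = x^i - (x^i-x^k)$), and the Calogero three--body identity $\sum_{k\neq i}\tfrac{1}{x^i-x^k}\sum_{\ell\neq k}\tfrac{1}{x^k-x^\ell} = \tfrac12 S_1^2 - \tfrac32 S_2$, which follows by splitting off the $\ell = i$ term (contributing $-S_2$) and applying $\tfrac{1}{(a-b)(b-c)} = \tfrac{1}{a-c}(\tfrac{1}{a-b} + \tfrac{1}{b-c})$ together with an antisymmetrization to the remaining genuinely three--index sum (contributing $\tfrac12 S_1^2 - \tfrac12 S_2$). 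Substituting, the $x^i S_1$ terms from the drift-- and self--interaction pieces cancel, the $S_1^2$ terms likewise cancel, the coefficient of $S_2/(N-1)^2$ becomes $\tfrac{3\beta^2}{8} - \tfrac{\sigma^2\beta}{2} = \beta(\tfrac32\beta - 2\sigma^2)/4 = C_2$ by hypothesis, and the leftover scalar is $\tfrac{(x^i)^2}{8} - \tfrac{\beta}{4} - \tfrac{\sigma^2}{4(N-1)} = \tfrac{(x^i)^2}{8} - \lambda^{N,i}_\beta$; this equals $F^{N,i}(\x) - \lambda^{N,i}_\beta$ exactly, finishing the proof.

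I do not anticipate a genuine obstacle, since this is ultimately a computation; the only place needing care is the bookkeeping in the drift--interaction term, and in particular getting the coefficient $-\tfrac32$ (rather than the $-1$ that would appear in the open--loop system \eqref{opendysonnash}, where the opponents' reactions enter differently and the closed--loop cross--term $\sum_{k\neq i}\partial_k v^{N,k}\partial_k v^{N,i}$ is replaced) right in the three--body identity \eqref{keyidentity2}. It is precisely this $\tfrac32$ that forces the relation $C_2 = \beta(\tfrac32\beta - 2\sigma^2)/4$ here, as opposed to $C_2 = \beta(\beta - 2\sigma^2)/4$ in the open loop, and hence underlies the fact that $\beta_{closed}(C_2) \neq \beta_{open}(C_2)$.
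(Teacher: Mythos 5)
Your proof is correct and follows essentially the same direct-verification route as the paper: the derivatives, the telescoping of $\sum_{k\neq i}x^k/(x^i-x^k)$, and the three-body antisymmetrization (the paper's \eqref{basicalgebra}) all match, and the coefficient bookkeeping yielding $C_2=\beta(\tfrac32\beta-2\sigma^2)/4$ and $\lambda^{N,i}_\beta$ is right. One small mislabel: the two identities you use are not \eqref{keyidentity1}, \eqref{keyidentity2} (those are the index-averaged versions used in the open-loop Lemma \ref{openloopehjbtheorem}), but the pointwise identities you state and prove in-line, so the argument stands as written.
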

	\bpf
The proof follows by direct calculation. First we collect some facts:
\begin{equation} \label{partials}
\partial_i v^{N,i}_\beta(\x) = \frac{x^i}{2} - \frac{\beta}{2} (h_1 * \mu^{N,i}_\x)(x^i) ,
\ \ \partial^2_iv^{N,i}_\beta(\x) = \frac{1}{2} + \frac{\beta}{2} (h_2 * \mu^{N,i}_\x)(x^i) 
\end{equation} 
and similarly if $k \neq i$ 
$$
\partial_k v^{N,i}_\beta(\x) = \frac{\beta}{2(N-1)} \frac{1}{x^i-x^k}, \ \ \partial^2_kv^{N,i}_\beta(\x) = \frac{\beta}{2(N-1)}\frac{1}{(x^i-x^k)^2}. 
$$
Then we can compute
$$
-\frac{\sigma^2}{2(N-1)} \Delta_\x v^{N,i}_\beta(\x) = -\frac{\sigma^2}{2(N-1)} \sum_{k=1}^N \partial_k^2 v^{N,i}_\beta(\x)
=  -\frac{\sigma^2}{4(N-1)} -\frac{\sigma^2\beta}{2} \frac{(h_2 * \mu^{N,i}_\x)(x^i)}{N-1}  
$$
and
\begin{equation} \label{controlcalc}
\begin{aligned}
	& \frac{1}{2}(\partial_iv^{N,i}_\beta)^2(\x) = \frac{1}{2} \left ( \frac{x^i}{2} - \frac{\beta}{2}(h_1 * \mu^{N,i}_\x)(x^i)\right)^2 \\
	& = \frac{(x^i)^2}{8} + \frac{\beta^2}{8} \frac{(h_2 * \mu^{N,i}_\x)(x^i)}{N-1}
	 - \frac{\beta}{4} x^i \cdot (h_1*\mu^{N,i}_\x)(x^i) 
	+  \frac{\beta^2}{8(N-1)^2} \sum_{k:k\neq i} \sum_{\ell: \ell \neq i,k} \frac{1}{x^i-x^k} \frac{1}{x^i-x^\ell}.
\end{aligned}
\end{equation}
Similarly, we have
\begin{equation} \label{driftinteraction}
\ba
& \sum_{k: k\neq i} \partial_k v^{N,k}_\beta(\x)\partial_k v^{N,i}_\beta(\x)  =
\frac{\beta}{2(N-1)} \sum_{k: k \neq i} \left (  \frac{x^k}{2}-\frac{\beta}{2}(h_1 * \mu^{N,k}_\x)(x^k) \right) \frac{1}{x^i - x^k} \\
& = \frac{\beta^2}{4} \frac{(h_2 * \mu^{N,i}_\x)(x^i)}{N-1} 
+  \frac{\beta}{4(N-1)} \sum_{k: k \neq i} \frac{x^k}{x^i-x^k} 
-  \frac{\beta^2}{4(N-1)^2} \sum_{k: k\neq i} \sum_{\ell: \ell \neq k,i} \frac{1}{x^k-x^\ell}\frac{1}{x^i-x^k}.
\ea
\end{equation}
The two final terms of \eqref{controlcalc}, \eqref{driftinteraction} cancel by the algebra
\begin{equation} \label{basicalgebra}
	2\sum_{k: k\neq i} \sum_{\ell: \ell \neq k,i} \frac{1}{x^k-x^\ell}\frac{1}{x^i-x^k} = 
	\sum_{k: k\neq i} \sum_{\ell: \ell \neq k,i} \frac{1}{x^k-x^\ell} \left [ \frac{1}{x^i-x^k} - \frac{1}{x^i-x^\ell} \right ]
	=\sum_{k:k\neq i} \sum_{\ell: \ell \neq i,k} \frac{1}{x^i-x^k} \frac{1}{x^i-x^\ell},
\end{equation}
while the two second--to--last terms of \eqref{controlcalc}, \eqref{driftinteraction} combine to yield the constant $-\frac{\beta}{4}$. 
Putting everything together completes the proof.
\epf

\subsection*{Open loop model} \label{openloop}
To formulate the \emph{open loop model for the $N$ player Dyson game}, we proceed as above.
\bd \label{openadmissibility}
A profile $(\boldsymbol{\alpha}_t)_{t \geq 0} =((\alpha^1_t, \ldots, \alpha^N_t))_{t \geq 0}$ of $\R$--valued processes is \emph{admissible} if
it is $\mb{F}$--progressively measurable and for every $\x_0 \in \mc{W}^N$, 
the process $(\X_t)_{t \geq 0} = (\X^{\boldsymbol{\alpha}}_t)_{t \geq 0}$ defined by 
		\begin{equation} \label{openNdynamics}
			d\X_t = \boldsymbol{\alpha}_t dt + \frac{\sigma}{\sqrt{N-1}} d\mathbf{W}_t, \ \ \X_0 = \x_0, \ \ \sigma \geq 0 
		\end{equation} 
		remains in $\overline{\mc{W}^N}$ and satisfies the integrability condition
		\begin{equation} \label{openadmissibleintegrability}
			\mb{E} \int_0^T \left [ \Vert \boldsymbol{\alpha}_t \Vert^2
				+ \Vert \X_t \Vert^2 + \sum_{1 \leq k < \ell \leq N} \frac{1}{(X^\ell_t - X^k_t)^2} 
			\right ]
			dt < \infty, \ \text{for all} \ \  T > 0.
		\end{equation} 
We denote the class of such admissible strategies by $\mb{A}^{(N)}$.  
\ed
Fix a strategy profile $(\boldsymbol{\alpha}_t)_{t \geq 0} \in \mb{A}^{(N)}$ and $C_2 \in \R$. Define for every $1 \leq i \leq N$
the $i$th player's admissible class $\mb{A}^i(\boldsymbol{\alpha}^{-i})$ to be the collection of $\R$--valued processes $(\eta_t)_{t \geq 0}$ such that
$$((\eta_t,\boldsymbol{\alpha}^{-i}_t))_{t \geq 0} := ((\alpha^1_t, \ldots, \alpha^{i-1}_t,\eta_t,\alpha^{i+1}_t, \ldots, \alpha^N_t) )_{t \geq 0} \in \mb{A}^{(N)},$$  
henceforth abbreviated ``$(\eta,\boldsymbol{\alpha}^{-i})$". Then the search for Nash equilibria in the open loop model requires each player $i$, $1\leq i \leq N$,
to minimize the ergodic cost (recall the definition $F^{N,i}$ of the state cost \eqref{statecost}) 
\begin{equation} \label{openloopcostfunctional}
	J^{N,i}(\eta \  |\x_0,\boldsymbol{\alpha}^{-i}) := \limsup_{T \to \infty} \frac{1}{T} \mb{E} \int_0^T
\left [ \frac{1}{2} \eta_t^2+ F^{N,i}(\X_t) \right] dt.
\end{equation}
over deviations $\eta \in \mb{A}^i(\boldsymbol{\alpha}^{-i})$, subject to $(\X_t)_{t \geq 0}  = (\X^{(\eta,\boldsymbol{\alpha}^{-i})}_t)_{t \geq 0}$
satisfying \eqref{openNdynamics} with $\X_0 = \x_0 \in \mc{W}^N$. 
Note by a slight abuse,
we maintain the same notation despite now working with control processes instead of feedbacks.
We omit an explicit definition of open loop Nash equilibrium since it is already indicated by Definition \ref{Nnashequilibrium}.

\bl \label{potentialgame}
Fix $C \in \RR$ and define a global cost function by 
\begin{equation} \label{openstatecost}
	F^N(\x) := \frac{\Vert \x \Vert^2}{8} + C  \sum_{i=1}^N \frac{(h_2 * \mu_\x^{N,i})(x^i)}{N-1}, \ \ \x \in \mc{W}^N, 
\end{equation} 
and corresponding global cost functional
\begin{equation} \label{opencostfunctional}
	J^N(\boldsymbol{\alpha}|\x_0) := \limsup_{T \to \infty} \frac{1}{T} \mb{E} \int_0^T \left [ \frac{1}{2} \Vert \boldsymbol{\alpha}_t \Vert^2 + F^{N}(\X_t)\right] dt
\end{equation}
over $(\boldsymbol{\alpha}_t)_{t\geq 0} \in \mb{A}^{(N)}$,
subject to $(\X_t)_{t \geq 0}  = (\X_t^{\boldsymbol{\alpha}} )_{t \geq 0}$ satisfying \eqref{openNdynamics} with $\X_0=\x_0 \in \mc{W}^N$.
Suppose that in definitions \eqref{statecost}, \eqref{openstatecost} of $F^{N,i}(\bz)$, $F^N(\bz)$
we take $C=C_2/2$.
Then the open loop model for the Dyson game is a \emph{potential game} in the following sense:
For any profile $(\boldsymbol{\alpha}_t)_{t \geq 0} \in \mb{A}^{(N)}$ such that the limit in \eqref{opencostfunctional} exists, and 
for any deviation $(\eta_t)_{t \geq 0} \in \mb{A}^i(\boldsymbol{\alpha}^{-i})$, $1 \leq i \leq N$, such that 
the limit in \eqref{openloopcostfunctional} exists, we have
\begin{equation} \label{potentialgamecondition}
	J^N((\eta,\boldsymbol{\alpha}^{-i}) | \x_0 ) - J^N(\boldsymbol{\alpha}| \x_0 ) = 
	J^{N,i}(\eta \  |\x_0,\boldsymbol{\alpha}^{-i}) - J^{N,i}( \alpha^i \  |\x_0,\boldsymbol{\alpha}^{-i}).
\end{equation}
\el 
\bpf  
First we calculate
$$
	\sum_{\ell =1}^N \sum_{k:k \neq \ell} \frac{1}{(x^\ell - x^k)^2} 
	= 2 \sum_{k:k \neq i} \frac{1}{(x^i-x^k)^2} + \sum_{\ell : \ell \neq i } \sum_{k:k \neq \ell,i} \frac{1}{(x^\ell - x^k)^2}.
	$$
Then under the assumptions of the statement,
a straightforward check confirms that condition \eqref{potentialgamecondition} holds, which exactly meets Definition 2.23 in \cite{bible1} for potential game.
\epf

As indicated by the characterizing condition \eqref{potentialgamecondition}, the potential game structure of
Lemma \ref{potentialgame} allows us to reduce the search for an open loop Nash equilibrium 
to a single auxiliary global problem. Given this setting of classical optimal control, the minimizer of \eqref{opencostfunctional} can be achieved
by strategies in closed loop feedback form and any such candidate is characterized by a solution to the ergodic HJB equation
\begin{equation} \label{opendysonnash}
	-\frac{\sigma^2}{2(N-1)} \cdot \frac{1}{N} \Delta_\x W(\x) + \frac{1}{2N} \Vert \nabla_\x W(\x) \Vert^2  
	= \frac{1}{N} F^{N}(\x) - \lambda, \ \ \x \in \mc{W}^N, 
\end{equation}
(the factor of ``$\frac{1}{N}$" will give the correct scale for the comparison at the end of Section \ref{verificationtheorem} and anticipates taking limits).
\begin{rem}
	If $C_2>0$, then the existence and uniqueness for bounded below, 
	unbounded above solutions to \eqref{opendysonnash} follows from
	Theorem 3.1 of Barles--Meireles \cite{gbjm1}.
\end{rem}
Compare the next statement with Lemma \ref{Nsubsolution}.
\begin{lem} \label{openloopehjbtheorem}
Assume $\sigma^2 \geq 0$ and that the coefficient $C_2$ satisfies $ C_2 \geq -\sigma^4/4$, so we may write 
$C_2 = \beta (\beta - 2 \sigma^2)/4$ for some $\beta \in \R$. 
Then the ergodic value pair 
	\begin{equation} \label{potentialfunction}
	\begin{cases}
		W_\beta(\x): = \frac{\Vert \bx \Vert^2}{4}  - \frac{\beta}{2(N-1)} \sum_{1 \leq k < \ell \leq N} \log (x^\ell-x^k),
		& \x \in \mc{W}^N \\
		\lambda_\beta^N := \frac{\beta}{8} + \frac{\sigma^2}{4(N-1)} & \text{\emph{otherwise}}
	\end{cases}
	\end{equation}
forms a classical solution to the HJB equation \eqref{opendysonnash} on $\mc{W}^N$.
\end{lem}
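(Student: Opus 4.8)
The plan is to verify directly that the pair $(W_\beta(\x), \lambda_\beta^N)$ of \eqref{potentialfunction} satisfies the ergodic HJB equation \eqref{opendysonnash}, by computing the gradient and Laplacian of $W_\beta$ and checking that the identity holds with the stated constant $\lambda_\beta^N$ precisely when $C = C_2/2$ and $C_2 = \beta(\beta - 2\sigma^2)/4$. First I would record the first and second partial derivatives: since $W_\beta(\x) = \frac{1}{4}\Vert\x\Vert^2 - \frac{\beta}{2(N-1)}\sum_{k<\ell}\log(x^\ell - x^k)$, one gets $\partial_i W_\beta(\x) = \frac{x^i}{2} - \frac{\beta}{2(N-1)}\sum_{k:k\neq i}\frac{1}{x^i-x^k} = \frac{x^i}{2} - \frac{\beta}{2}(h_1 * \mu_\x^{N,i})(x^i)$, so that in fact $\partial_i W_\beta = \partial_i v_\beta^{N,i}$ with $v_\beta^{N,i}$ as in \eqref{ehjbsolution} (a convenient sanity check via \eqref{partials}), and $\partial_i^2 W_\beta(\x) = \frac{1}{2} + \frac{\beta}{2(N-1)}\sum_{k:k\neq i}\frac{1}{(x^i-x^k)^2}$, whence $\Delta_\x W_\beta(\x) = \frac{N}{2} + \frac{\beta}{2(N-1)}\sum_{i=1}^N\sum_{k:k\neq i}\frac{1}{(x^i-x^k)^2}$.

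Next I would expand $\frac{1}{2N}\Vert\nabla_\x W_\beta(\x)\Vert^2 = \frac{1}{2N}\sum_{i=1}^N\left(\frac{x^i}{2} - \frac{\beta}{2(N-1)}\sum_{k:k\neq i}\frac{1}{x^i-x^k}\right)^2$. Squaring each term produces three contributions summed over $i$: a quadratic piece $\frac{1}{2N}\cdot\frac{\Vert\x\Vert^2}{4} = \frac{\Vert\x\Vert^2}{8N}$; a cross piece $-\frac{\beta}{4N(N-1)}\sum_{i}\sum_{k:k\neq i}\frac{x^i}{x^i-x^k}$; and a purely interaction piece $\frac{\beta^2}{8N(N-1)^2}\sum_i\left(\sum_{k:k\neq i}\frac{1}{x^i-x^k}\right)^2$. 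The last of these splits, just as in \eqref{controlcalc}, into a diagonal part $\frac{\beta^2}{8N(N-1)^2}\sum_i\sum_{k:k\neq i}\frac{1}{(x^i-x^k)^2}$ and an off-diagonal part involving $\sum_i\sum_{k:k\neq i}\sum_{\ell:\ell\neq i,k}\frac{1}{(x^i-x^k)(x^i-x^\ell)}$. By the symmetric version of the algebraic identity \eqref{basicalgebra} (now summed symmetrically over $i$ as well), this triple sum vanishes; likewise the cross piece $-\frac{\beta}{4N(N-1)}\sum_i\sum_{k:k\neq i}\frac{x^i}{x^i-x^k}$ symmetrizes to a constant via $\frac{x^i}{x^i-x^k} + \frac{x^k}{x^k-x^i} = 1$, giving $-\frac{\beta}{4N(N-1)}\cdot\frac{N(N-1)}{2} = -\frac{\beta}{8}$.

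Finally I would assemble the left-hand side of \eqref{opendysonnash}: the $-\frac{\sigma^2}{2N(N-1)}\Delta_\x W_\beta$ term contributes $-\frac{\sigma^2}{4(N-1)} - \frac{\sigma^2\beta}{4N(N-1)^2}\sum_i\sum_{k:k\neq i}\frac{1}{(x^i-x^k)^2}$, and combining with the pieces above, the coefficient of the surviving double sum $\frac{1}{N(N-1)^2}\sum_i\sum_{k:k\neq i}\frac{1}{(x^i-x^k)^2}$ is $\frac{\beta^2}{8} - \frac{\sigma^2\beta}{4} = \frac{\beta(\beta-2\sigma^2)}{8} = \frac{C_2}{4} = \frac{C_2/2}{2}$; since $\frac{1}{N}F^N(\x)$ with $C = C_2/2$ has singular part $\frac{C_2/2}{N(N-1)}\sum_i (h_2 * \mu_\x^{N,i})(x^i) = \frac{C_2/2}{N(N-1)^2}\sum_i\sum_{k:k\neq i}\frac{1}{(x^i-x^k)^2}$, these match. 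The remaining constants $-\frac{\sigma^2}{4(N-1)} - \frac{\beta}{8}$ reproduce $-\lambda_\beta^N$, and the quadratic terms $\frac{\Vert\x\Vert^2}{8N}$ match $\frac{1}{N}\cdot\frac{\Vert\x\Vert^2}{8}$. This completes the verification; the only real bookkeeping obstacle is tracking the off-diagonal triple sums and making sure the symmetrization identity \eqref{basicalgebra} is applied in its fully symmetric (sum-over-$i$) form, but this is routine once the partial derivatives are in hand.
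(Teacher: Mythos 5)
Your verification follows essentially the same route as the paper's proof: the paper simply packages your two symmetrization steps as the algebraic identities \eqref{keyidentity1} and \eqref{keyidentity2} (citing Proposition 11.3.1 of Forrester), whereas you re-derive them via $\frac{x^i}{x^i-x^k}+\frac{x^k}{x^k-x^i}=1$ and the three-term cancellation \eqref{basicalgebra}; both of those derivations are correct. The one thing to fix is an arithmetic slip in the final coefficient matching: since $C_2=\beta(\beta-2\sigma^2)/4$, you have $\frac{\beta^2}{8}-\frac{\sigma^2\beta}{4}=\frac{\beta(\beta-2\sigma^2)}{8}=\frac{C_2}{2}$, not $\frac{C_2}{4}$. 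As written, your chain of equalities asserts the left-hand coefficient is $C_2/4$ while the singular part of $\frac{1}{N}F^N$ carries coefficient $C_2/2$, which would \emph{not} match; with the corrected value $C_2/2$ the two sides agree and the verification closes exactly as you intend.
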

\bpf
The proof is exactly given by Propositions 11.3.1 of Forrester \cite{pf1}, which we repeat for the convenience of the reader. 
Recall the algebraic identities (see pg.252 of \cite{agz1}) 
	\begin{equation} \label{keyidentity1}
		\sum_{i=1}^N x^i\cdot (h_1 * \mu^{N,i}_\x)(x^i) = \frac{N}{2}. 
	\end{equation}
	and
	\begin{equation} \label{keyidentity2}
		 \sum_{i=1}^N \frac{(h_2 * \mu_\x^{N,i})(x^i)}{N-1} = \sum_{i=1}^N [(h_1*\mu^{N,i}_\x)(x^i)]^2. 
	\end{equation} 
Since $\partial_i W_\beta(\x) = \partial_i v^{N,i}_\beta(\x)$, we have by \eqref{partials}, \eqref{keyidentity1}, and \eqref{keyidentity2}  
$$
- \frac{\sigma^2}{2N(N-1)} \Delta_\x W_\beta(\x)  = - \frac{\sigma^2}{4(N-1)} 
- \frac{\beta \sigma^2}{4} \frac{1}{N} \sum_{i=1}^N \frac{(h_2 * \mu_\x^{N,i})(x^i)}{N-1}
$$
and
$$
\frac{1}{2N} \Vert \nabla_x W_\beta(\x)  \Vert^2 = \frac{\beta^2}{8} \cdot \frac{1}{N} \sum_{i=1}^N \frac{(h_2 * \mu_\x^{N,i})(x^i)}{N-1}
+ \frac{1}{N} \sum_{i=1}^N \frac{(x^i)^2}{8} - \frac{\beta}{8}. 
$$
\epf

\section{Verification theorems} \label{verificationtheorem}
\subsection*{Closed loop model}
Assume that the coefficient $C_2$ from \eqref{statecost} 
can be written $C_2 = \beta (\frac{3}{2}\beta - 2\sigma^2)/4 > - \sigma^4/8$ for some $\beta > \sigma^2 > 0$.
Then, by Lemma \ref{Nsubsolution}, the set of solution pairs \eqref{ehjbsolution} to the $N$--Nash system \eqref{dysonnash} 
furnishes a feedback profile $\boldsymbol{\phi}_\beta^*(\x) : = (-\partial_iv^{N,i}_\beta(\x))_{i=1}^N$ with trajectories $(\X^*_t )_{t \geq 0} $
given by \emph{$\beta/\sigma^2$-Dyson Brownian motion} \eqref{dbm}.
Since $\beta > \sigma^2$, $(\X^*_t)_{t \geq 0}$ remains in the interior $\mc{W}^N$ for all $t>0$, even if $\X_0^* = \x_0 \in \overline{\mc{W}^N}$ (see 
\cite{ecdl2,lrzs1}), but we still need to check that $\boldsymbol{\phi}_\beta^*$ satisfies the integrability condition \eqref{admissibleintegrability}.

For $(\x,\alpha^i) \in \mc{W}^N \times \R$, write the cost as
	\begin{equation} \label{fullcost}
		f_\beta^{N,i}(\x,\alpha^i) := 
	 \frac{(\alpha^i)^2}{2} + \frac{(x^i)^2}{8} + \frac{\beta}{4} \left (\frac{3}{2}\beta - 2\sigma^2 \right ) \cdot \frac{(h_2 * \mu^{N,i}_\x)(x^i)}{N-1}
	\end{equation}
	Note the (control) Hamiltonian of player $i$, $1 \leq i \leq N$, is given for
	$(\x,\boldsymbol{y}^i,\boldsymbol{\alpha}) \in \mc{W}^N \times \R^N  \times \R^N$ by
	\begin{equation} \label{hamiltonian}
	H_\beta^i(\x,\boldsymbol{y}^i,\boldsymbol{\alpha}) := \sum_{k=1}^N y^{ik} \alpha^k + f_\beta^{N,i}(\x,\alpha^i).
	\end{equation}
	The following \emph{Verification Theorem} is proven in detail because, somewhat surprisingly,
	its content supports our proof of Proposition \ref{conjecture} in Section \ref{justifypassage}.
	\bt \label{verificationthm}
	Fix $\beta > \sigma^2 > 0$ in \eqref{fullcost} and recall from \eqref{ehjbsolution} 
	the solution pairs $(v^{N,i}_\beta(\x), \lambda^{N,i}_\beta)$, $1 \leq i \leq N$, to the $N$--Nash system \eqref{dysonnash}. 
	Fix an interior initial condition $\x_0 \in \mc{W}^N$.
	Then $\boldsymbol{\phi}^*_\beta(\x) := (-\partial_i v^{N,i}_\beta(\x))_{i=1}^N$
	is a closed loop Nash equilibrium over the classes $\mc{A}^i \subset \mc{A}^{i}(\boldsymbol{\phi}_\beta^{*-i})$ of deviations
	$\psi^i(\x)$, $1 \leq i \leq N$, such that $(\X_t )_{t \geq 0}  = (\X^{(\psi^i,\boldsymbol{\phi}_\beta^{*-i})}_t )_{t \geq 0} $ 
	of \eqref{Ndynamics} satisfies the stability conditions 
	\begin{equation} \label{controlconditions}
		\ba
		\limsup_{T \to \infty} \frac{1}{T} \mb{E} \int_0^T f_\beta^{N,i}(\X_t, \psi^i(\X_t)) dt < \infty, \ \ \  
		& \limsup_{T \to \infty} \frac{1}{T} \mb{E}[v^{N,i}_\beta(\X_T) - v^{N,i}_\beta(\x_0)] = 0.
		\ea
	\end{equation}
	Further, the cost to each player $i$ under the equilibrium dynamics $(\X^*_t)_{t \geq 0} = (\X^{\boldsymbol{\phi}_\beta^*}_t)_{t \geq 0}$ satisfies
	$$
	\inf_{\psi \in \mc{A}^i} J^{N,i}(\psi | \x_0, \boldsymbol{\phi}^{*-i}_\beta) =  J^{N,i}(\phi^{*i}_\beta | \x_0, \boldsymbol{\phi}^{*-i}_\beta) 
	= \lambda^{N,i}_\beta =  \frac{\sigma^2}{4(N-1)}+ \frac{\beta}{4}.
	$$ 
	\et
	\bpf
	We check the candidate Nash equilibrium $\boldsymbol{\phi}^*_\beta$ satisfies the integrability condition \eqref{admissibleintegrability}
	and that the corresponding dynamics $(\X^*_t)_{t \geq 0}$ satisfy the stability conditions in \eqref{controlconditions}. First, we review some facts.
	Observe $W_\beta(\x)$ of \eqref{potentialfunction} is uniformly convex: for any vector $v \in \R^N$ and $\x \in \mc{W}^N$
	$$
	v^\dagger \nabla \nabla W_\beta(\x)v = \frac{1}{2}\sum_{k=1}^N v_i^2 +   
	\frac{\beta}{2(N-1)} \sum_{1 \leq k < \ell \leq N} \frac{(v_\ell - v_k)^2}{(x_\ell - x_k)^2} 
	\geq \frac{1}{2}\sum_{k=1}^N v_i^2.	
	$$
	Notice we may write the global dynamics of the system \eqref{dbm} as the gradient flow
	$$
	d\X_t^* = -\nabla_\bx W_\beta(\X_t^*)dt + \frac{\sigma}{\sqrt{N-1}} d\mathbf{W}_t, \ \ \X^*_0 = \x_0,
	$$
	with (unique) globally invariant log--concave probability measure given by the $\beta/\sigma^2$--ensemble:
		\begin{equation} \label{invariantmeasure}
		\ba
		\mu_\beta^N(d\x) = m^N_\beta(\x) d\x  & := \frac{N!}{Z_{\beta}^N} \cdot  \exp \left \{ -\frac{2(N-1)}{\sigma^2}W_\beta(\x) \right \}
		\mathbbm{1}_{\mc{W}^N}(\x) d\x,  \\
		\ea
	\end{equation}
	where $Z_{\beta}^N < \infty$ is the normalization constant (compare these expressions with (13), (14), and (16) of Dyson \cite{theOG}). 
	Write $P_tf(\x):=\mb{E}_\x f(\X^*_t)$ for the semigroup and the generator as
	$$ 
	\mc{L} : = \frac{\sigma^2}{2(N-1)} \Delta_\x  - \nabla_\x W_\beta(\x) \cdot \nabla_\x.
	$$
	Recall that invariance means $\mu^N_\beta(\mc{L}f) = 0$ for suitable $f$. More precisely,
	we say $f$ is in the domain of $\mc{L}$ to mean there exists a function $g$ such that for every
	$\x \in \mc{W}^N$, $\int_0^T |g(\X^*_t)| dt <\infty$ and
	the process $M_T:= f(\X_T^*) - f(\x) - \int_0^T g(\X^*_t) dt$ is an $\mb{F}$--martingale; one then writes $\mc{L}f = g$. 
	The dynamics are reversible with respect to $\mu^N_\beta$ and for any $f \in L^1(\mu^N_\beta)$, we have (by ``asymptotic
	flatness,'' following from the monotonicity of the drift; see Section 7.3 of Arapostathis-Borkar-Ghosh \cite{arapostathis2012ergodic})
	\begin{equation} \label{ergodicity}
		\frac{1}{T} \int_0^T P_tf(\x_0) dt \overset{T \to \infty}{\rightarrow} \mu^N_\beta[f]= \int f(\x) \mu^N_\beta(d\x).
	\end{equation}
	
	Now we check the integrability condition \eqref{admissibleintegrability} using ideas from the proof of Lemma 4.3.3 of Anderson--Guionnet--Zeitouni \cite{agz1}.
	Since $x^2 - 2\beta \log(1+|x|)$ is uniformly bounded below and $\log |x-y| \leq \log (1+|x|) + \log (1+|y|)$ for $x,y \in \R$, we can estimate
	for any $1 \leq k \neq \ell \leq N$
	\begin{equation} \label{lyapunovinequality}
	\ba
		W_\beta(\x) + \frac{\beta}{4(N-1)} \log |x^k - x^\ell| \geq \frac{1}{4} \sum_{i=1}^N \left [ (x^i)^2 - 2\beta \log(1+|x^i|)  \right ] \geq - M',
	\ea
	\end{equation}
	for some constant $M' = M'(N) \in \R$ independent of $k,\ell$; indeed, we also have $W_\beta(\x) \geq -M'$.
	Defining $T_M := \inf\{ t \geq 0 \ : \ W_\beta(\X_t^*) \geq M  \}$, the estimate \eqref{lyapunovinequality} implies that on the event $T_M>T$,
	each gap can be controlled: $|X^k_t-X^\ell_t|\geq \exp\left [-\frac{4(N-1)}{\beta}(M+M') \right ]$ for all $t \leq T$ and $k \neq \ell$.
	Hence, we can use Ito's formula along with Lemma \ref{openloopehjbtheorem} to compute up until the time $T_M$ (recall the definition \eqref{openstatecost} of $F^N(\bx)$)
	\begin{equation} \label{lyapunovsde}
	\begin{aligned}
	 & dW_\beta(\X_t^*) =  \left ( - F^N(\bX^*_t) + N \lambda^N_\beta
	  -  \frac{1}{2} \Vert \nabla_\bx W_\beta(\bX^*_t) \Vert^2 \right )  dt + \frac{\sigma}{\sqrt{N-1}}\nabla_\bx W_\beta(\X^*_t) \cdot d\mathbf{W}_t.   \\
	& = \left ( - \frac{ \Vert \bX^{*}_t \Vert^2}{4} + \frac{N}{4} \left (\beta + \frac{\sigma^2}{N-1} \right ) - 
		\frac{\beta}{4} (\beta- \sigma^2)\sum_{i=1}^N \frac{(h_2 * \mu_{\X^*_t}^{N,i})(X^{*i}_t)}{N-1}  \right ) dt
		+ \frac{\sigma}{\sqrt{N-1}}\nabla_\bx W_\beta(\X^*_t) \cdot d\mathbf{W}_t.
		\end{aligned}
	\end{equation}
	where the local martingale in \eqref{lyapunovsde} stopped at time $T_M$ is a true martingale.
	Putting everything together and recalling $\beta > \sigma^2$, we have
	\begin{equation} \label{finiteh2}
	\mb{E} \sum_{i=1}^N \int_0^{T \wedge T_M} (h_2 * \mu^{N,i}_{\X_t^*})(X_t^{*i}) dt 
	\leq \frac{4(N-1)}{\beta (\beta-\sigma^2)} \cdot \left [\frac{N}{4} \left (\beta + \frac{\sigma^2}{N-1} \right ) T + W_\beta(\bx_0) + M' \right ] < \infty.
	\end{equation}
	The proof of Lemma 4.3.3 of \cite{agz1} shows $\lim_{M\to \infty} T_{M} = +\infty$ almost surely, 
	so an application of Fatou's lemma implies the 
	expectation of the time integral of the reciprocal gaps squared is finite for every $T>0$. 
	In addition, since $\mu^N_\beta$ is the global invariant measure, our work also implies 
	that if $\X^* \sim \mu^N_\beta$, we have $\mb{E} \sum_{i=1}^N (h_2 * \mu_{\X^*}^{N,i})(X^{*i}) < \infty$.
	(Notice this argument fails for $\beta = \sigma^2$ and 
	in fact these expectations can be shown to diverge by comparison with a $2$--dimensional Bessel process.)
 
	Hence, it is now easy to see that the conditions of \eqref{admissibleintegrability}, \eqref{controlconditions} hold for the 
	candidate Nash equilibrium $\boldsymbol{\phi}^*_\beta$ when $\beta>\sigma^2>0$. For example, we now know that $v^{N,i}_\beta$ is in
	the domain of $\mc{L}$, so by the fundamental theorem of calculus, the fact that $\mc{L}v^{N,i}_\beta \in L^1(\mu^N_\beta)$,
	ergodicity \eqref{ergodicity}, and then invariance of $\mu^N_\beta$, 
	the second condition of \eqref{controlconditions} follows from
	\begin{equation} \label{ergodiclimit}
	\lim_{T \to \infty} \frac{1}{T} \mb{E}[v^{N,i}_\beta(\X^*_T) - v^{N,i}_\beta(\x_0)]
	= \lim_{T \to \infty} \frac{1}{T} \int_0^T P_t (\mc{L}v^{N,i}_\beta)(\x_0) dt 
	= \mu^N_\beta[\mc{L}v^{N,i}_\beta] = 0. 
	\end{equation} 

	Now fix $1 \leq i \leq N$ and $\psi \in \mc{A}^i$. Let $(\X_t )_{t \geq 0}  = (\X^{(\psi^i,\boldsymbol{\phi}_\beta^{*-i})}_t )_{t \geq 0} $ 
	satisfy \eqref{Ndynamics}. Using the fact that $(v^{N,i}_\beta(\x),\lambda_\beta^{N,i})$, $1 \leq i \leq N$, 
	solves the $N$--Nash system \eqref{dysonnash} and that Ito's formula holds
	for all time as long as no collisions occur,  we can compute (as in Proposition 2.11 of Carmona--Delarue \cite{bible1})
	\begin{equation} \label{Nverificationcalc}
		\ba
		& dv^{N,i}_\beta(\X_t)+[f_\beta^{N,i}(\X_t,\psi(\X_t)) - \lambda^{N,i}_\beta ]dt 
		=  \frac{\sigma}{\sqrt{N-1}} \sum_{k=1}^N \partial_kv^{N,i}_\beta(\X_t) dW^k_t\\
		& +[H_\beta^i(\X_t,\nabla_\x v^{N,i}_\beta(\X_t), (\psi(\X_t),\boldsymbol{\phi}^*_\beta(\X_t)^{-i})) 
		- H_\beta^i(\X_t,\nabla_\x v^{N,i}_\beta(\X_t), \boldsymbol{\phi}^*_\beta(\X_t)) ]dt 
		\ea
	\end{equation}
	First, the local martingale on the right of \eqref{Nverificationcalc} is a true martingale by the integrability condition \eqref{admissibleintegrability}.
	Second, the Hamiltonian \eqref{hamiltonian} satisfies a strict \emph{Isaacs' condition} (see Definition 2.9 of \cite{bible1}),
	so the difference of Hamiltonian values is nonnegative.  
	Third, we can use the two conditions of \eqref{controlconditions} 
	to take expectations and limits to conclude
	$$
	J^{N,i}(\psi | \x_0, \boldsymbol{\phi}^{*-i}) = \limsup_{T \to \infty} \frac{1}{T} \mb{E}
	\int_0^T f_\beta^{N,i}(\X_t, \psi(\X_t)) dt 
	 \geq  \lambda^{N,i}_\beta.
	$$
	with equality if and only if $\psi \equiv  \phi^{*i}_\beta = - \partial_i v^{N,i}_\beta $.  
	This confirms that the ergodic constant coincides with the associated minimal cost and
	concludes the proof.
	\epf
\subsection*{Open loop model}
To express the cost of the global functional $J^N$ from \eqref{opencostfunctional}, we write (compare with \eqref{fullcost} above)
	\begin{equation} \label{opencostfunction}
	f_{\beta}^{N}(\x,\boldsymbol{\alpha}) := \frac{1}{2} \Vert \boldsymbol{\alpha} \Vert^2 + \frac{\Vert \x \Vert^2}{8} 
		+ \frac{\beta}{8} (\beta-2\sigma^2) \sum_{i=1}^N \frac{(h_2 * \mu_\x^{N,i})(x^i)}{N-1}. 
	\end{equation}

\bt \label{openverificationthm}
	Fix $\beta > \sigma^2 > 0$ in \eqref{opencostfunction} and recall from \eqref{potentialfunction} the solution pair $(W_\beta(\x), \lambda^{N}_\beta)$ to the ergodic HJB equation \eqref{opendysonnash}.	
	Fix an interior initial condition $\x_0 \in \mc{W}^N$. Let $(\X^*_t)_{t \geq 0}$ be given by $\beta/\sigma^2$--Dyson Brownian motion \eqref{dbm}. 
	Then the profile $(\boldsymbol{\alpha}^*_t)_{t \geq 0} := (- \nabla_\x W_\beta(\X_t^*))_{t \geq 0}$
	is a global minimizer over the class $\mb{A} \subset \mb{A}^{(N)}$ of strategies $(\boldsymbol{\alpha}_t)_{t \geq 0}$ such that 
	$(\X_t)_{t \geq 0}  = (\X^{\boldsymbol{\alpha}}_t )_{t \geq 0} $ of \eqref{openNdynamics} satisfies the stability conditions
	\begin{equation} \label{opencontrolconditions}
		\ba
		\limsup_{T \to \infty} \frac{1}{T} \mb{E} \int_0^T f_\beta^{N}(\X_t, \boldsymbol{\alpha}_t) dt < \infty, \ \ \  
		& \limsup_{T \to \infty} \frac{1}{T} \mb{E}[W_\beta(\X_T) - W_\beta(\x_0)] = 0.
		\ea
	\end{equation}
	The global cost under the equilibrium dynamics $(\X^*_t)_{t \geq 0} =(\X^{\boldsymbol{\alpha}^*}_t)_{t \geq 0}$ then satisfies
	$$
	\inf_{\boldsymbol{\alpha} \in \mb{A}} J^{N}(\boldsymbol{\alpha} | \x_0) 
	=  J^{N}(\boldsymbol{\alpha}^* | \x_0) = N \cdot \lambda^N_\beta =  N \cdot \left ( \frac{\sigma^2}{4(N-1)} + \frac{\beta}{8} \right ). 
	$$
	Further, for the Dyson game, the strategy $(\boldsymbol{\alpha}^*_t)_{t \geq 0} \in \mb{A}^{(N)}$ is also an open loop Nash equilibrium over classes 
	$\mb{A}^i \subset \mb{A}^i(\boldsymbol{\alpha}^{*-i})$, $1 \leq i \leq N$, of deviations $(\eta_t)_{t \geq 0}$ 
	such that the limits in \eqref{opencontrolconditions} exist, i.e., 
	$$
	\inf_{ \eta \in \mb{A}^i} J^{N,i}(\eta | \x_0,\boldsymbol{\alpha}^{*-i} ) =  J^{N,i}(\alpha^{*i} | \x_0, \boldsymbol{\alpha}^{*-i}), \ \ \ 1 \leq i \leq N.
	$$ 
	Finally, when $\beta = \sigma^2$ in the cost \eqref{opencostfunction} of $J^N$,
	the global minimization problem admits a minimizing sequence with value $N \cdot \lambda^N_{\sigma^2}$.
	\et 
	\bpf
	The proof follows essentially verbatim that of the closed loop Verification Theorem \eqref{verificationthm}, 
	except for the last two statements. The first one follows because we have identified in Proposition \ref{potentialgame} that the potential game structure 
	holds over stable deviations where the limit of the time average cost \eqref{openloopcostfunctional} exists. 
	To sketch the last claim, for $\beta>\sigma^2$, $1 \leq i \leq N$, and with $\X \sim \mu_\beta^N$ of \eqref{invariantmeasure}, we have
	\begin{equation} \label{infimizingcalc}
	\ba
	\mb{E}[f_{\beta}^{N}(\X,-\nabla_\x W_\beta(\X) )] - \mb{E}[f_{\sigma^2}^{N}(\X,-\nabla_\x W_\beta(\X))] 
		& = \left [  \frac{\beta}{8}(\beta - 2 \sigma^2 ) - \left ( - \frac{\sigma^4}{8} \right )    \right ] 
		\sum_{i=1}^N \frac{\mb{E} (h_2 * \mu^{N,i}_\X)(X^i)}{(N-1)} \\
		& = \left [ \frac{\beta}{4} - \frac{(\beta+\sigma^2)}{8}  \right ] (\beta-\sigma^2) \sum_{i=1}^N  \frac{\mb{E} (h_2 * \mu^{N,i}_\X)(X^i)}{(N-1)} \\
		& \overset{\beta \downarrow \sigma^2}{\to} 0,
	\ea
	\end{equation}
	where we rely on \eqref{finiteh2} to know that the factor right of the square brackets is bounded as $\beta \downarrow \sigma^2$.  
	\epf
		\begin{rem} \label{averageindex}
It is now straightforward to conclude 
		an integrated version of the limit \eqref{singularcostlimit}.
		Indeed, either make use of Theorem \ref{openverificationthm} directly or
		average the optimal cost calculation of Theorem \ref{verificationthm} over all indices combined with the algebraic identities \eqref{keyidentity1}, \eqref{keyidentity2} to compute
		\begin{equation} \label{averageindexcalc}
			\lim_{N \to \infty} \frac{1}{N} \cdot \sum_{i=1}^N \frac{ \mb{E}(h_2 * \mu_\X^{N,i})(X^i)}{N-1} = \frac{1}{2(\beta-\sigma^2)} 
			= \frac{\pi^2 \beta}{3(\beta - \sigma^2)} \mb{E}[m_\beta(\bar{X})^2],
		\end{equation}
		where $\X \sim \mu_\beta^N$ and $\bar{X} \sim \mu_\beta$.
		Thus, Proposition \ref{conjecture} is difficult because
		one cannot exploit the algebraic identities \eqref{keyidentity1}, \eqref{keyidentity2} that occur upon averaging. 
	\end{rem}

\subsection*{Comparison of closed and open loop models} \label{deift9} 
	Keeping $\sigma>0$ fixed, we know by Theorems \ref{verificationthm}, \ref{openverificationthm} that the Nash--optimal repulsion
	parameters are given by the larger roots in the variable ``$\beta$" of the quadratic relationships 
	$C_2 = \frac{\beta}{2} \left ( \frac{3}{4} \beta - \sigma^2  \right )$ from Lemma \ref{Nsubsolution} for $C_2 >-\sigma^4/8$
	and $C_2 = \frac{\beta}{4} (\beta - 2 \sigma^2)$ from Lemma \ref{openloopehjbtheorem} for $C_2 \geq -\sigma^4/4$; these roots are respectively given by $\beta_{closed}(C_2), \beta_{open}(C_2)$ of \eqref{roots}, which are graphed in top left plot of Figure \ref{onlyfigure}.
	For the open loop model, we can approximate individual player costs using the mean field equation \eqref{openloopmasterequation} below
	(see top right plot of Figure \ref{onlyfigure}).		
	We can also approximate the average cost under the open loop equilibrium, 
	which we denote by $\bar{\lambda}^N_{open}(C_2)$; 
	namely, averaging \eqref{openloopcostfunctional} over $1 \leq i \leq N$ and using the limit calculation \eqref{averageindexcalc} of Remark \ref{averageindex},
	we have for $N$ large 
	$$
	\bar{\lambda}^N_{open}(C_2) := \lambda^N_{\beta_{open}(C_2)} +\frac{C_2}{2} \cdot \frac{1}{N} \sum_{i=1}^N \frac{\mathbb{E}(h_2*\mu_{\X}^{N,i})(X^i)}{N-1} 
	\approx \frac{\beta_{open}(C_2)}{8} +\frac{C_2}{4(\beta_{open}(C_2)-\sigma^2)},
	$$
	where $\X \sim \mu_{\beta_{open}(C_2)}^N$ (recall definition \eqref{invariantmeasure} of $\mu_\beta^N$).
	\begin{figure}[h!] 
		\centering
		\includegraphics[width=.49 \textwidth]{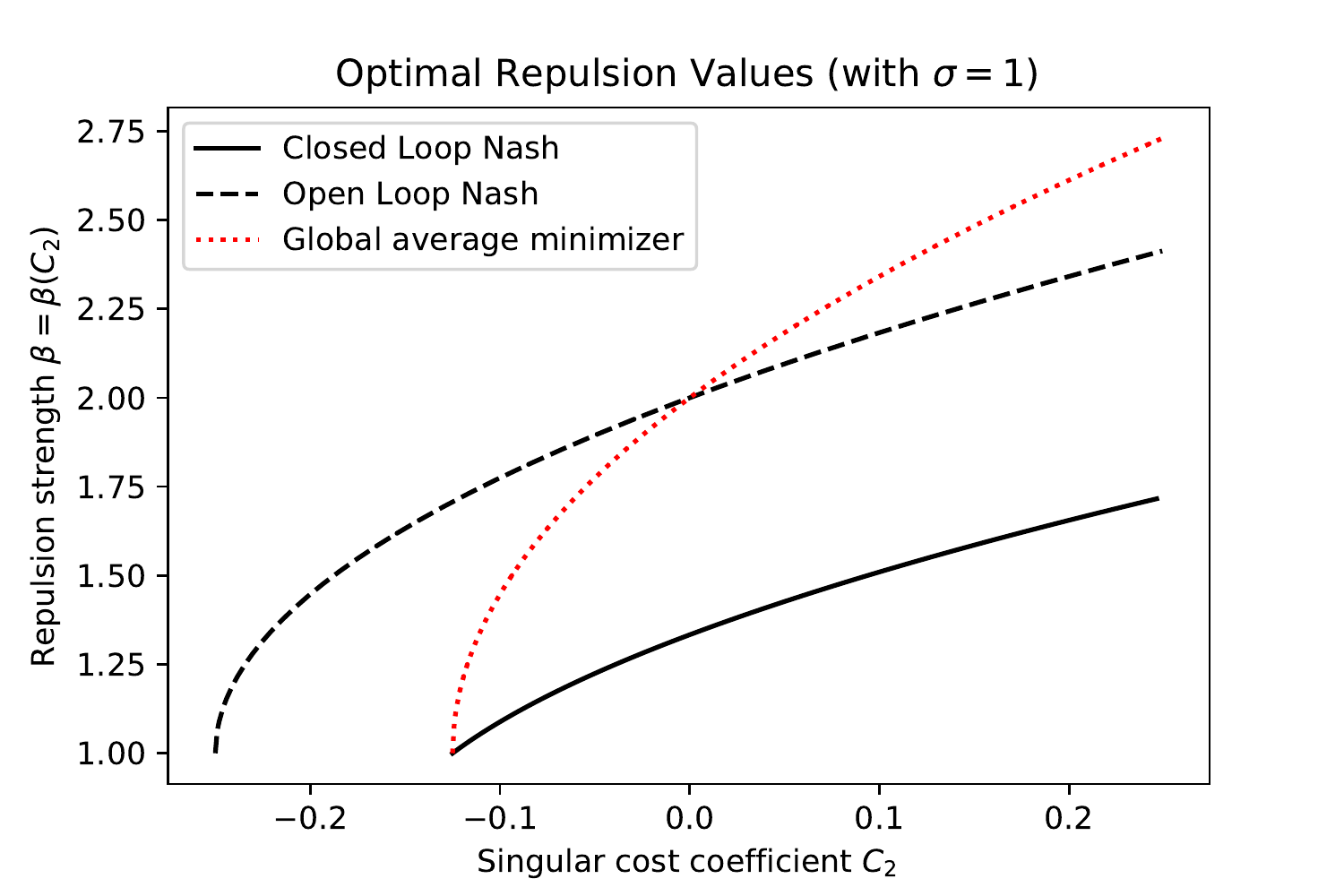}
		\includegraphics[width=.49 \textwidth]{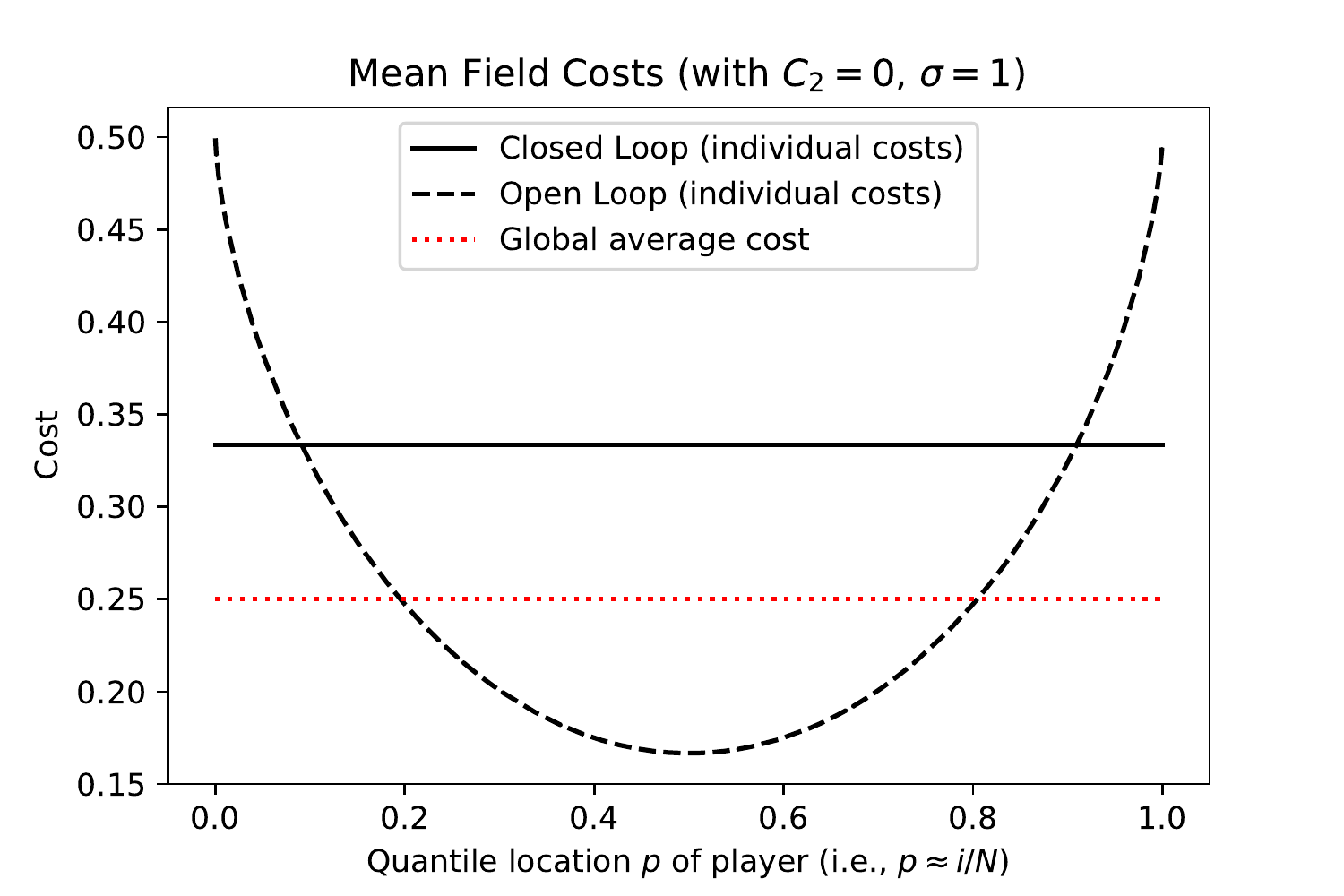}
		\includegraphics[width=.49 \textwidth]{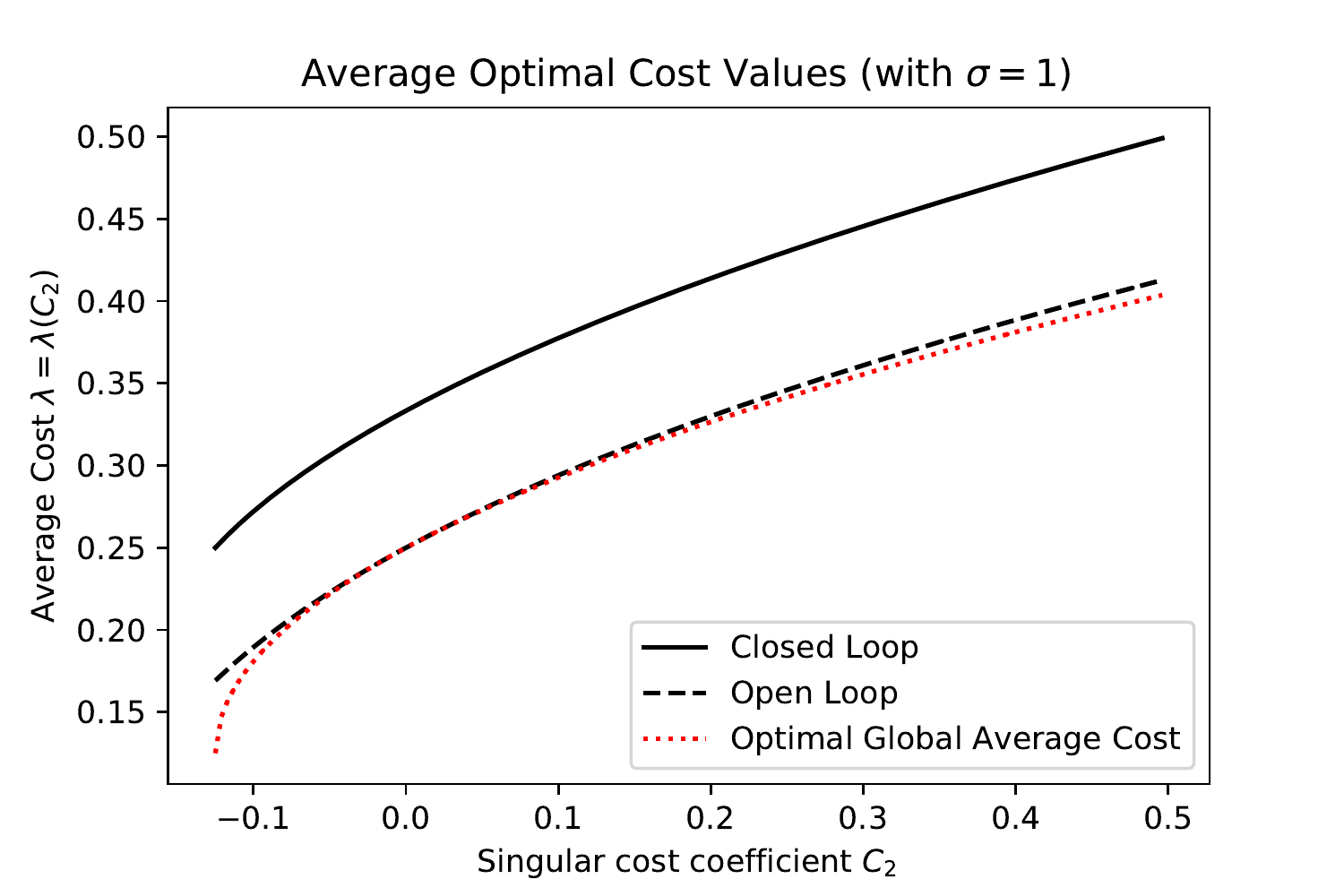}
		\includegraphics[width=.49 \textwidth]{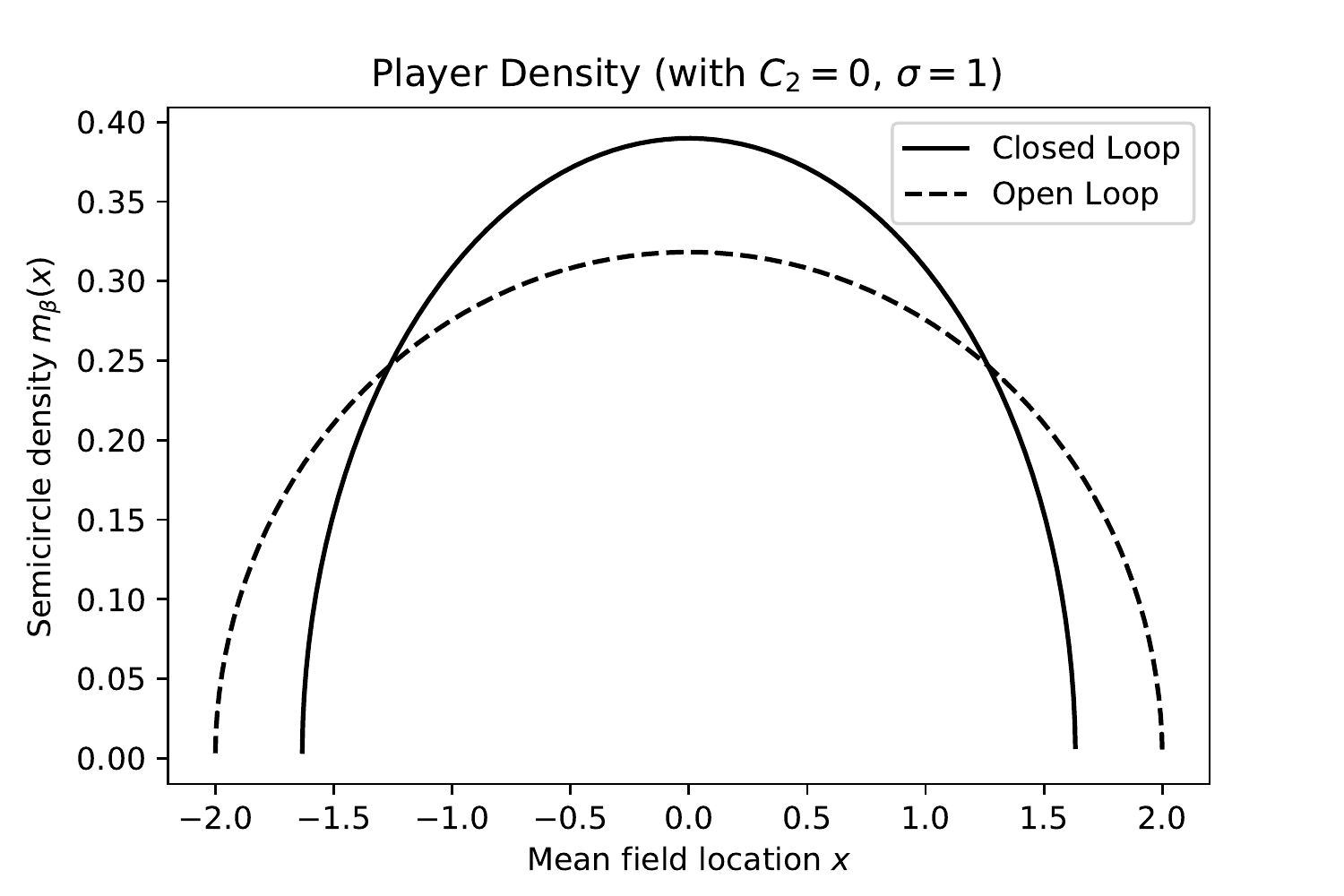}
		\caption{The top left plot compares the Nash--optimal $\beta_{closed}(C_2), \beta_{open}(C_2)$ and global average minimizing $\beta_{open}(2C_2)$ parameters. 
		The bottom left compares the average cost $\bar{\lambda}_{open}^N(C_2)$ under the open loop Nash equilibrium 
		with the optimal global average cost $\lambda^N_{\beta_{open}(2C_2)}$, which only coincide at $C_2=0$; in particular, except at this coincidence, 
		the Nash equilibriums are never global average minimizers.
		Fixing $C_2=0$, the top right compares the (approximate) optimal costs by location for individual players of each model. 
		The bottom right compares the equilibrium player densities.}
		\label{onlyfigure}
	\end{figure}
	
	Notice the open loop strategy prescribes a higher repulsion to accommodate the players densely packed around the origin, i.e.,
	$\beta_{open}(C_2)>\beta_{closed}(C_2)$, and in doing so achieves a lower average cost, 
	i.e., $\bar{\lambda}^N_{open}(C_2)< \frac{1}{N} \sum_{i=1}^N \lambda_{\beta_{closed}(C_2)}^{N,i} \approx \frac{\beta_{closed}(C_2)}{4}$.
	However, the closed loop Nash equilibrium is fair to all (no matter their rank) while the open loop Nash equilibrium
	has greater cost for players away from the origin.  
	Lastly, recall $\lambda^N_{\beta_{open}(C_2)}$ is an auxiliary cost; indeed,
	$\beta_{open}(2C_2)$ minimizes the actual global average cost with minimum $\lambda^N_{\beta_{open}(2C_2)}$, for given $C_2 > -\sigma^4/8$
	(see red curves in Figure \ref{onlyfigure}). 

	For either model of player information, we may write the equilibrium profile in feedback form using the function 
		$$
		\boldsymbol{\phi}_\beta^*(\x) =  - \nabla_\x W_\beta(\x) = (-\partial_iv^{N,i}_\beta(\x))_{i=1}^N 
		= \left ( \frac{\beta}{2}(h_1*\mu_\x^{N,i})(x^i) - \frac{x^i}{2}  \right )_{i=1}^N.
		$$	
		Our discussion above implies a surprising consequence:
		the strategies $\boldsymbol{\phi}_{\beta_{closed}(C_2)}^*(\x)$, $\ \boldsymbol{\phi}_{\beta_{open}(C_2)}^*(\x)$ do not converge together as $N \to \infty$ (unless $C_2=0$ \emph{and}
		we remove the state--constraint and reciprocal gap integrability conditions from our Definition \ref{admissibility} of admissibility). Indeed,
	the closed and open loop Nash equilibriums always yield different mean field behavior.  
	This discrepancy appears in the radius of the limiting semicircle law $\mu_\beta$ (see the bottom right plot of Figure \ref{onlyfigure}),
	but the repulsion parameter $\beta/\sigma^2$ also distinguishes the local limit behavior. 
	We thus believe this result offers a new perspective on Problem 9 of Deift's list \cite{deift1}, 
	which asks to construct a model to explain \v{S}eba's findings \cite{goegue}
	that gaps between parked cars can exhibit GUE statistics ($\beta=2\sigma^2$) on a two--way street but GOE statistics ($\beta=\sigma^2$) on a one--way street.
	Although we do not construct a specific model for this observational study, 
	we have shown rigorously that the local limit can vary depending only on the chosen model of player information 
	and not just on the optimization problem players face. For example, if $C_2=0$, $\sigma=1$, 
	then $\beta_{closed}(0) =  4/3$ and $\beta_{open}(0) = 2$ 
	(see the right two plots in Figure \ref{onlyfigure}; note the latter corresponds to GUE \eqref{density} under mean field scaling).

	\section{Mean field equations} \label{meanfieldapproximation}
	Recall the definition \eqref{masterfield} of $U_\beta(x,\mu)$,
	which is readily observed to be the mean field analog of the solution $v^{N,i}_\beta(\x)$ of \eqref{ehjbsolution}.
	Contrastingly, the state cost $F^{N,i}(\x)$ of \eqref{statecost} cannot naively be written as a function with a probability measure argument
	because the singular cost term ``$(h_2 * \mu)(x)$" is not well defined for $x \in \text{supp}(\mu)$. 
	Instead, we will find that this ill--behaved transform should be replaced by a local term proportional to the squared density ``$m(x)^2$".

To serve as the mean field analog of the ergodic HJB equation \eqref{opendysonnash} 
	for the global minimization problem auxiliary to the open loop model,
	we introduce the following ergodic Hamilton--Jacobi equation on the Wasserstein space $\mc{P}^3_2(\R)$: 
	\begin{equation} \label{openloopmasterequation}
	\ba
	\frac{1}{2} \int_\R |\partial_\mu \mc{U}(\mu)(x)|^2 \mu(dx) 
		= \int_\R \frac{x^2}{8} \mu(dx) + \frac{\pi^2\beta^2}{24}  \int_\R m(x)^3 dx   - \lambda
	\ea
\end{equation}
 (such an ergodic Hamilton-Jacobi equation appears in (1.4) of Gangbo--\'{S}wi\c{e}ch \cite{gangbo1}).  
 Given the form of the function $W_\beta(\x)$ of \eqref{potentialfunction}, we are motivated to consider the functional
\begin{equation} \label{meanfieldpotential}
	\mc{U}_\beta(\mu):= \frac{1}{4} \left ( \int_\R x^2 \mu(dx) - \beta \int_\R \int_\R \log |x-y| \mu(dy) \mu(dx)  \right ), \ \ 
	\mu \in \mc{P}^p_2(\R), \ \ 1 < p < \infty.
\end{equation}

	\bl \label{lemmaformalcalcs}
	The \emph{formal} calculations 
	\begin{equation} \label{formalcalcs}
	\partial_\mu \mc{U}_\beta(\mu)(x) = \partial_xU_\beta(x,\mu) = \frac{x}{2} - \frac{\beta}{2}H\mu(x) , 
		\ \ \ \partial_\mu U_\beta(x,\mu)(z) =  \frac{\beta}{2} \frac{1}{x-z}, 
	\end{equation} 
	obtained from the computational expression \eqref{wassersteingradient} can be understood rigorously as follows: 

	For every $\mu \in \mc{P}_2^3(\R)$, 
	the \emph{subdifferential of minimum norm} is the unique element $\partial_\mu \mc{U}_\beta(\mu)(x) \in L^2(\mu)$ satisfying, 
	for every $\phi \in C^{1}_0(\R)$, 
	\begin{equation} \label{weakcarrillo}
	\int_\R \partial_\mu \mc{U}_\beta(\mu)(z) \phi(z) \mu(dz) =
	\int_\R \frac{z}{2} \phi(z) \mu(dz) - \frac{\beta}{4} \int_\R \int_\R \frac{\phi(z) - \phi(y)}{z-y} \mu(dz) \mu(dy).
	\end{equation}

	For every $\mu \in \mc{P}_2^p(\R)$, $2\leq p<\infty$, with density $m(x)$ and for every $\phi(x) \in L^2(\mu)$, we define  
	\begin{equation} \label{wassersteinaction}
	\int_\R \partial_\mu U_\beta(x,\mu)(z) \phi(z) \mu(dz) 
		:= \lim_{\delta \downarrow 0} \frac{U_\beta(x,(T_\delta)_\# \mu) - U_\beta(x,\mu)}{\delta}  = \frac{\beta}{2}H[\phi \cdot m](x),
	\end{equation}
	where $T_\delta(z):=z+\delta\phi(z)$.
	\el
	\bpf
	Either Lemma 3.7 of Carrillo--Ferreira--Precioso \cite{carrillo1} or Lemma 5.3 of Berman--\"{O}nnheim \cite{berman2016propagation} 
	establishes the existence of a minimal selection $\partial_\mu \mc{U}_\beta(\mu)(x) \in L^2(\mu)$ for every $\mu \in \mc{P}_2^3(\R)$,
	and the weak expression \eqref{weakcarrillo} appears as (3.12) of the first source or as (5.3) of the second.
	Turning to $U_\beta(x,\mu)$, the expression on the righthand-side of
	 \eqref{wassersteinaction} makes sense since $\phi(x)m(x) \in L^{\frac{2p}{p+1}}(\R)$:
	\begin{equation} \label{holder}
	\int_\R (\phi(x)^2m(x))^{\frac{p}{p+1}} \cdot m(x)^{\frac{p}{p+1}} dx 
	\leq \left ( \int_\R \phi(x)^2 m(x) dx \right )^{p/(p+1)} \left ( \int_\R m(x)^p dx \right )^{1/(p+1)} <\infty,
	\end{equation} 
	by H\"{o}lder's inequality. Then we can compute the derivative in a transport direction directly: 
	\begin{equation} \label{metricslopecalc}
		\ba
\lim_{\delta \downarrow 0} \frac{U_\beta(x,(T_\delta)_\# \mu) - U_\beta(x,\mu)}{\delta} 
		& =   \lim_{\delta \downarrow 0} \frac{-\frac{\beta}{2} \int_{\R} \big[\log(|z+\delta\phi(z)-x|-\log(|z-x|) \big] \mu(dz)}{\delta}
		=\frac{\beta}{2} H[\phi \cdot m](x), 
		\ea
	\end{equation}
	where the last equality follows by the monotone convergence theorem (see also the proof of Lemma 2.45 of Deift--Kriecherbauer--McLaughlin \cite{deiftpotential}).
	\epf
\begin{lem}
\label{masterequation} 
	Fix $\beta \in \R$. For every $\mu \in \mc{P}_{2}^p(\R)$, $2 \leq p < \infty$, the pair $(U_\beta(x,\mu), \frac{\beta}{4})$ 
	forms a solution of the Voiculescu--Wigner master equation \eqref{dysonmaster}. 
	For every $\mu \in \mc{P}_{2}^3(\R)$,
	the pair $(\mc{U}_\beta(\mu),\frac{\beta}{8})$ forms a solution to the ergodic Hamilton--Jacobi equation \eqref{openloopmasterequation}.
\end{lem}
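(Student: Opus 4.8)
The plan is to reduce both claims to direct substitution of the expressions from Lemma~\ref{lemmaformalcalcs} into the respective equations, the bookkeeping being organized around two classical identities for the finite Hilbert transform. Writing $H\mu = Hm$ when $\mu$ has density $m$, I would first record the elementary identity
\[
H[(\cdot)m](x) = \text{p.v.}\int_\R \frac{y\,m(y)}{x-y}\,dy = x\,Hm(x) - 1,
\]
obtained by writing $y = -(x-y)+x$ and using $\int_\R m = 1$, together with the Tricomi--Poincar\'e--Bertrand identity (in the paper's normalization, i.e.\ without the factor $1/\pi$)
\[
2\,H[m\,Hm](x) = (Hm)(x)^2 - \pi^2 m(x)^2,
\]
valid for a.e.\ $x$ when $m \in L^p(\R)$, $p>1$; here all the transforms are well defined in $L^p$ or $L^{p/2}$ by the boundedness $\Vert Hm \Vert_p \le A_p \Vert m \Vert_p$ recalled in Section~\ref{notation}. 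I would cite a standard reference for this identity (Tricomi, King's monograph, or within the RMT literature, e.g.\ \cite{agz1,gb1}).

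For the Voiculescu--Wigner master equation \eqref{dysonmaster} with $U = U_\beta$: Lemma~\ref{lemmaformalcalcs} gives $\partial_x U_\beta(x,\mu) = \tfrac{x}{2} - \tfrac{\beta}{2}Hm(x)$, hence $\tfrac{1}{2}|\partial_x U_\beta(x,\mu)|^2 = \tfrac{x^2}{8} - \tfrac{\beta}{4}x\,Hm(x) + \tfrac{\beta^2}{8}(Hm)(x)^2$. For the nonlocal pairing I apply \eqref{wassersteinaction} with $\phi(z) = \partial_x U_\beta(z,\mu)$; note $\phi\,m \in L^{p/2}(\R) \subset L^1(\R)$ exactly because $p \ge 2$, so the right-hand side of \eqref{wassersteinaction} is meaningful, and it equals $\tfrac{\beta}{4}H[(\cdot)m](x) - \tfrac{\beta^2}{4}H[m\,Hm](x)$. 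Invoking the two identities above turns this into $-\tfrac{\beta}{4} + \tfrac{\beta}{4}x\,Hm(x) - \tfrac{\beta^2}{8}(Hm)(x)^2 + \tfrac{\pi^2\beta^2}{8}m(x)^2$. Adding the two contributions, the $x\,Hm(x)$ terms and the $(Hm)(x)^2$ terms cancel, leaving exactly $\tfrac{x^2}{8} + \tfrac{\pi^2\beta^2}{8}m(x)^2 - \tfrac{\beta}{4}$, i.e.\ \eqref{dysonmaster} holds with $\lambda = \tfrac{\beta}{4}$.

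For the ergodic Hamilton--Jacobi equation \eqref{openloopmasterequation} with $\mc{U} = \mc{U}_\beta$: again $\partial_\mu \mc{U}_\beta(\mu)(x) = \tfrac{x}{2} - \tfrac{\beta}{2}Hm(x)$, so $\tfrac{1}{2}\int_\R |\partial_\mu \mc{U}_\beta(\mu)|^2\,d\mu = \int_\R \tfrac{x^2}{8}m\,dx - \tfrac{\beta}{4}\int_\R x\,Hm\,m\,dx + \tfrac{\beta^2}{8}\int_\R (Hm)^2 m\,dx$. The middle integral equals $\tfrac12$ by the symmetrization $\iint \tfrac{x}{x-y}m(x)m(y)\,dx\,dy = \tfrac12 \iint \tfrac{x-y}{x-y}m(x)m(y)\,dx\,dy = \tfrac12$ (the continuous analog of \eqref{keyidentity1}). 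For the last integral I derive $\int_\R m\,(Hm)^2\,dx = \tfrac{\pi^2}{3}\int_\R m^3\,dx$ by pairing the pointwise Tricomi identity with $m$ and using skew-adjointness $\int_\R (Hf)g = -\int_\R f\,(Hg)$: this gives $-\int m(Hm)^2 = \tfrac12 \int m(Hm)^2 - \tfrac{\pi^2}{2}\int m^3$, whence the claim, with $\int_\R m^3\,dx < \infty$ precisely because $p \ge 3$. Substituting produces the right-hand side of \eqref{openloopmasterequation} with $\lambda = \tfrac{\beta}{8}$.

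The only real difficulty is the analytic bookkeeping: justifying the Tricomi--Poincar\'e--Bertrand identity and the various principal-value products at the stated integrability, and justifying the Fubini/symmetrization and skew-adjointness steps. The thresholds $p \ge 2$ (master equation) and $p \ge 3$ (Hamilton--Jacobi) are exactly what is needed for $(Hm)m \in L^1$, $(Hm)^2 m \in L^1$, and $m^3 \in L^1$ respectively, after which every manipulation above is licit; I would dispatch this by quoting the relevant $L^p$ mapping properties and identities from the cited references rather than reproving them.
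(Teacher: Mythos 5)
Your argument is essentially the paper's: both proofs reduce everything to the Hilbert-transform product rule $\pi^2m^2 = (Hm)^2 - 2H[mHm]$, the elementary shift $H[(\cdot)m] = xHm - 1$, the symmetrization $\int xHm\,m\,dx = \tfrac12$, and the cubic identity $\int m(Hm)^2 = \tfrac{\pi^2}{3}\int m^3$; the only cosmetic difference is that you re-derive the latter from the Tricomi identity plus skew-adjointness of $H$, whereas the paper cites it from Blower/Voiculescu. One small slip: $L^{p/2}(\R)\not\subset L^1(\R)$ for $p>2$ (and in any case $L^1$ is the \emph{wrong} target for the Hilbert transform, which is unbounded there); what you actually want is $\phi m \in L^q$ for some $q\in(1,\infty)$, which follows as in the paper's inequality \eqref{holder} giving $xm\in L^{2p/(p+1)}$ together with $Hm\cdot m\in L^{p/2}$ for $p>2$ — the net reasoning is fine, only the stated inclusion is off.
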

	\bpf
	For the first statement,
	note by item (2) of Theorem 2.2 in Carton--Lebrun \cite{cl1} that
	for any $m \in L^p(\R)$ with $2 \leq p < \infty$, we have the Hilbert transform \emph{product rule}
	\begin{equation} \label{productformula}
		\pi^2 m^2(x) = (Hm)^2(x) - 2H[mHm](x),  
	\end{equation}
	for almost every $x \in \R$. Moreover, since $\mu \in \mc{P}^p_2(\R)$, we have $x m(x) \in L^{\frac{2p}{p+1}}(\R)$ by \eqref{holder}.
	Since $\frac{2p}{p+1} \geq \frac{4}{3}$, the two expressions of \eqref{formalcalcs} from Lemma \ref{lemmaformalcalcs} can now be combined rigorously to compute
	\begin{equation} \label{threetwo}
	\int \partial_\mu U_\beta(x,\mu)(z)\partial_xU_\beta(z,\mu)\mu(dz) =  \frac{\beta}{4}H[zm(z)](x) - \frac{\beta^2}{4} H[mHm](x),
	\end{equation}
	Next, we have 
	$$
	 \frac{1}{2} \left ( \frac{x}{2} - \frac{\beta}{2}H\mu(x) \right )^2 = \frac{x^2}{8} + \frac{\beta^2}{8}(H\mu)^2(x) -  \frac{\beta}{4} x H\mu(x),
	$$ 
	and these last two equations lead us to compute	for almost every $x \in \R$
	$$
	\frac{\beta}{4} \left ( H[zm(z)](x) - x Hm(x) \right) = - \frac{\beta}{4}. 
	$$
 	The statement then follows by combining terms and using the product rule \eqref{productformula}.

For the second claim, Lemma 6.5.4 of Blower \cite{gb1} or Lemma 3.3 of Voiculescu \cite{voic1} establishes the identity 
\begin{equation} \label{bloweridentitiy}
\int_\R (H\mu)^2(x) \mu(dx) = \frac{\pi^2}{3} \int_\R m^3(x) dx
\end{equation}
for $\mu \in \mc{P}^3(\R)$. Further, for $\mu \in \mc{P}^3_2(\R)$ we can integrate the identity
$$
x (H\mu)(x) = 1 - \text{p.v.} \int_\R \frac{y}{y-x} \mu(dy) 
$$
against $\mu$ to compute $\int_\R x (H\mu)(x) \mu(dx) = \frac{1}{2}$. The proof then follows by direct computation.
\epf
	\begin{rem} \label{voicconsistency}
		The notion of \emph{free information}, defined on $L^3(\R)$ by $\int m^3(x) dx$,
		was introduced by Voiculescu in \cite{voic1} and the identity \eqref{bloweridentitiy} indicates the role of ``$H\mu(x)$" as \emph{score function}. 
		The Hamilton--Jacobi equation \eqref{openloopmasterequation} essentially appears as the heuristic limit (1.4.4) of Biane--Speicher \cite{pbrs1}; indeed,
		when $\lambda=\frac{\beta}{8}$, the righthandside of \eqref{openloopmasterequation}
		can be written as the \emph{relative free Fisher information}, defined on $\mc{P}_2^3(\R)$ by (cf. Section 6.1 of \cite{pbrs1})
		\begin{equation} \label{relativeinfo}	
		\frac{1}{2} \int \left ( \frac{\beta}{2}H\mu(x) - \frac{x}{2} \right )^2 \mu(dx).
		\end{equation}
		Further, taking the Fr\'{e}chet derivative of \eqref{relativeinfo} formally
		yields the righthandside of \eqref{dysonmaster} with $\lambda = \frac{\beta}{4}$.
	Therefore, finding a measure $\bar{\mu}$ such that $U_\beta(x,\bar{\mu})$ is constant corresponds to 
		a first order condition on the relative free Fisher information \eqref{relativeinfo},
		which is consistent with Voiculescu's information--minimizing characterization of Wigner's semicircle law, Proposition 5.2 of \cite{voic1}.
		These considerations motivate why we refer to \eqref{dysonmaster} as the Voiculescu--Wigner master equation 
		(although Lions \cite{lionslectures} is responsible for the lefthandside).  
The idea that RMT-type statistics occur when some metric of information is minimized appears repeatedly in the literature;
see Chapter 3.6 of Mehta \cite{mehta} for the origins of this idea for a fixed ``symmetry class," but the attempt to identify
		a critical point of the repulsion parameter $\beta/\sigma^2$ using \emph{mutual} information seems to be more recent, e.g., \cite{voic6,pw1}.
	\end{rem}

\section{Mean field game formulation of the Dyson Game} \label{MFGformulation}

	Fix a curve $\bmu=(\mu_t)_{t \geq 0} \in C([0,\infty),\mc{P}_2(\R))$ of probability measures with densities $(m_t(x))_{t \geq 0}$. 
	Since we lose the diffusion in the limit, we adopt a \emph{weak formulation}
	of the associated mean field game, where one controls the law of the state.
\begin{defn}\label{meanfieldadmissibility}
	A feedback functional $\phi : \R \times \mc{P}(\R) \to \R$ is \emph{admissible for $\bmu=(\mu_t)_{t \geq 0}$}
	if there exists a unique flow $\bnu^\phi = (\nu_t)_{t \geq 0} \in C([0,\infty),\mc{P}_2(\R))$ with densities $(n_t(x))_{t \geq 0}$
	solving the transport equation (in the distributional sense)
	\begin{equation}
		\label{meanfielddynamics}
		\partial_t \nu_t(dx) + \nabla_x \circ ( \phi(x, \mu_t) \nu_t(dx) )=0, \ \ \nu_0 = \mu_0,
	\end{equation}
and satisfying the integrability condition
	\begin{equation} \label{meanfieldintegrability}
	  \int_0^T \int_\RR [\phi(x,\mu_t)^2 + m_t^2(x)] n_t(x)dx dt  < \infty, \ \ \text{for all} \ \  T>0.
	\end{equation} 
	We denote the class of such feedbacks by $\mc{A}(\bmu)$. 
\end{defn}
	For any initial law $\nu \in \mc{P}_2(\RR)$, consider the optimization problem of minimizing the ergodic cost
	\begin{equation} \label{meanfieldcostfunctional}
	J(\phi|\nu, \bmu ) := 
	\limsup_{T\to \infty} \frac{1}{T}  \int_0^T \int_\RR \left [\frac{1}{2} \phi(x,\mu_t)^2 + \frac{x^2}{8} + \frac{\pi^2 \beta^2}{8} \cdot m_t(x)^2 \right] n_t(x) dx dt
	\end{equation}
	over $\phi \in \mc{A}(\bmu)$, subject to controlled flows $\bnu^\phi = (\nu_t)_{t \geq 0} \in C([0,\infty),\mc{P}_2(\R))$ satisfying \eqref{meanfielddynamics}.
\begin{defn} \label{mfgsolutions}
	A pair $(\phi,\bmu)$
	with $\bmu = (\mu_t)_{t \geq 0} \in C([0,\infty),\mc{P}_2(\R))$
	and $\phi(x,\mu) \in \mc{A}(\bmu)$
	 is an \emph{ergodic strong mean field equilibrium (MFE) over the class $\mc{A} \subset \mc{A}(\bmu)$}
	if the flow $\bnu^\phi = (\nu_t)_{t \geq 0}$ of \eqref{meanfielddynamics}
	satisfies the fixed point condition $\bnu^\phi = \bmu$ and the optimality condition 
	\begin{equation} \label{mfgoptimality}
		\inf_{\psi \in \mc{A}} J(\psi | \mu_0 , \bmu ) = J(\phi | \mu_0, \bmu ).  
	\end{equation}
\end{defn}
Recall $\beta/\sigma^2$--Dyson Brownian motion $(\X^*_t)_{t \geq 0}$ of \eqref{dbm}. 
Theorem 1 of Rogers--Shi \cite{lrzs1} (see also Section 4.3.2 of \cite{agz1})
provides natural conditions under which, for any $T>0$, $(\mu^N_{\X^*_t})_{0 \leq t \leq T}$ converges a.s. 
on $C([0,T]; \mc{P}(\R))$ as $N \to \infty$ to the unique solution $(\mu^*_t)_{0 \leq t \leq T}$ of the McKean--Vlasov equation 
\begin{equation} \label{mkv1}
	\int_\R f(x) \mu_T(dx) = \int_\R f(x) \mu_0(dx) + \frac{1}{2} \int_0^T \left (
	\frac{\beta}{2} \int_\R \int_\R \frac{f'(x)-f'(y)}{x-y} \mu_t(dx) \mu_t(dy) - \int_\R x f'(x) \mu_t(dx) \right )dt
\end{equation}
for any twice continuously differentiable test function $f$ with $f,xf'(x),f''(x)$ bounded.
The flow $\bmu^*=(\mu^*_t)_{t \geq 0}$ is thus the natural candidate for a
strong MFE in the sense of Definition \ref{mfgsolutions}.  
Before stating the next theorem that verifies this candidate, we define for $(x,\mu,y,\alpha) \in \R \times \mc{P}^3_2(\R) \times \R \times \R$ the running cost
	\begin{equation} \label{meanfieldcost}
		f(x,\mu,\alpha) := \frac{\alpha^2}{2} + \frac{x^2}{8} + \frac{\pi^2\beta^2}{8} m(x)^2 
	\end{equation}
	and the (control) Hamiltonian 
	\begin{equation} \label{meanfieldhamiltonian} 
		H(x,\mu,y,\alpha) : = y \cdot \alpha + f(x,\mu,\alpha).
	\end{equation}
	Recall the definitions \eqref{masterfield}, \eqref{meanfieldpotential} of $U_\beta(x,\mu)$, $\mc{U}_\beta(\mu)$, respectively.

\begin{thm} \label{meanfieldverification}
	Fix $\beta>0$ and an initial condition $\mu^*_0 \in \mc{P}_2^3(\R)$. Assume $\bmu^* = (\mu_t^*)_{t \geq 0}$ satisfies \eqref{mkv1}. Consider the feedback 
\begin{equation} \label{mainfeedback}
	\phi^*_\beta(x,\mu):= - \partial_\mu \mc{U}_\beta(\mu)(x) = -\partial_xU_\beta(x,\mu) = \frac{\beta}{2} H\mu(x) - \frac{x}{2}.
\end{equation}
	Then $\bmu^* = (\mu^*_t)_{t \geq 0} \in C([0,\infty),\mc{P}^3_2(\R))$
	and the pair $(\phi^*_\beta, \bmu^* )$ 
	forms an ergodic strong MFE over the class $\mc{A} \subset \mc{A}(\bmu^*)$ of deviations $\psi(x,\mu)$ such that 
	$\bnu^\psi = (\nu_t)_{t \geq 0}$ satisfies \eqref{meanfielddynamics} with $\nu_0 = \mu_0^*$ and the stability conditions
	\begin{equation} \label{meanfieldcontrolconditions}
\begin{aligned}
		\limsup_{T \to \infty} & \frac{1}{T} \int_0^T \int_\RR f(x, \mu^*_t, \psi(x, \mu^*_t)) \nu_t(dx) dt < \infty, \\ 
		 \limsup_{T \to \infty} & \frac{1}{T} \left [\int_\RR U_\beta(x, \mu^*_T) \nu_T(dx) - \int_\RR U_\beta(x,\mu^*_0)\nu_0(dx) \right ] = 0.
\end{aligned}
	\end{equation}
	Further, the cost achieved by the optimal feedback $\phi^*_\beta(x,\mu)$ satisfies
	$$
	\inf_{ \psi \in \mc{A}} J(\psi |\mu_0^*,\bmu^*) = J(\phi^*_\beta|\mu_0^*,\bmu^*) = \lambda_\beta=\frac{\beta}{4}.
	$$
\end{thm}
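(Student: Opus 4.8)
The plan is to mirror the finite--population argument of Theorem~\ref{verificationthm}, with the Voiculescu--Wigner master equation~\eqref{dysonmaster} --- solved by $(U_\beta(x,\mu),\tfrac\beta4)$ by Lemma~\ref{masterequation} --- playing the role of the ergodic HJB equation, and the McKean--Vlasov equation~\eqref{mkv1} describing the equilibrium flow. I would carry this out in three steps: (i) show $\bmu^* \in C([0,\infty),\mc{P}^3_2(\R))$ and $\phi^*_\beta \in \mc{A}(\bmu^*)$; (ii) verify the fixed--point condition $\bnu^{\phi^*_\beta}=\bmu^*$; (iii) establish the optimality inequality $J(\psi\,|\,\mu^*_0,\bmu^*)\ge\lambda_\beta=\tfrac\beta4$ for every admissible deviation, with equality precisely when $\psi=\phi^*_\beta$.

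For (i), the point is that the drift $\phi^*_\beta(x,\mu)=\tfrac\beta2 H\mu(x)-\tfrac x2=-\partial_\mu\mc{U}_\beta(\mu)(x)$ of~\eqref{mainfeedback}, so $\bmu^*$ is the $d_2$--gradient flow of the functional $\mc{U}_\beta$ from~\eqref{meanfieldpotential}; existence, uniqueness and the requisite smoothing estimates are then furnished by the theory of~\cite{ambrosio1}, specialized to this singular logarithmic setting as in Carrillo--Ferreira--Precioso~\cite{carrillo1} and Berman--\"{O}nnheim~\cite{berman2016propagation}. Combining these with the explicit free--Ornstein--Uhlenbeck description of~\eqref{mkv1} (under which $\mu^*_t$ is a dilation of the free convolution of $\mu^*_0$ with a semicircle of time--dependent variance), one obtains that $m^*_t\in L^\infty(\R)$ for every $t>0$, and that the geodesic convexity of $\mc{U}_\beta$ together with $d_2$--continuity of $\bmu^*$ at the origin keeps $\|m^*_t\|_{L^3}$ under control as $t\downarrow0$ starting from $\mu^*_0\in\mc{P}^3_2$; with the uniform second--moment bound coming from the quadratic confinement, this yields $\bmu^*\in C([0,\infty),\mc{P}^3_2(\R))$. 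Admissibility $\phi^*_\beta\in\mc{A}(\bmu^*)$ then follows: in~\eqref{meanfieldintegrability} the term $\int(m^*_t)^3$ is finite, and $\int(\phi^*_\beta(x,\mu^*_t))^2m^*_t(x)\,dx$ is bounded by a constant times $\int x^2\mu^*_t(dx)+\int(H\mu^*_t)^2\mu^*_t(dx)$, the latter being $\tfrac{\pi^2}3\|m^*_t\|^3_{L^3}$ by the identity~\eqref{bloweridentitiy}.

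Step (ii) is immediate: substituting $\phi^*_\beta$ into the transport equation~\eqref{meanfielddynamics} produces exactly the distributional form of~\eqref{mkv1}, so the uniqueness statement of Rogers--Shi~\cite{lrzs1} (cf.\ Section~4.3.2 of~\cite{agz1}) forces $\bnu^{\phi^*_\beta}=\bmu^*$. For (iii), fix $\psi\in\mc{A}$ with controlled flow $\bnu^\psi=(\nu_t)_{t\ge0}$, $\nu_0=\mu^*_0$. Using $U_\beta(\cdot,\mu^*_t)$ (which is $C^1$ in $x$, with $\partial_xU_\beta(x,\mu^*_t)=\tfrac x2-\tfrac\beta2 H\mu^*_t(x)$ bounded on the relevant range once $m^*_t\in L^\infty$) as a time--dependent test function in the weak formulation of the transport equation for $\nu_t$, one gets
\[
\frac{d}{dt}\int_\R U_\beta(x,\mu^*_t)\,\nu_t(dx)=\int_\R\partial_t\big[U_\beta(x,\mu^*_t)\big]\,\nu_t(dx)+\int_\R\partial_xU_\beta(x,\mu^*_t)\,\psi(x,\mu^*_t)\,\nu_t(dx).
\]
Since $\mu^*_t$ itself solves~\eqref{meanfielddynamics} with feedback $\phi^*_\beta=-\partial_xU_\beta$, the chain rule along $\bmu^*$ gives $\partial_t[U_\beta(x,\mu^*_t)]=-\int_\R\partial_\mu U_\beta(x,\mu^*_t)(z)\,\partial_zU_\beta(z,\mu^*_t)\,\mu^*_t(dz)$, which by the master equation~\eqref{dysonmaster} equals $\lambda_\beta-\min_{\alpha\in\R}H(x,\mu^*_t,\partial_xU_\beta(x,\mu^*_t),\alpha)$ with $H$ the Hamiltonian~\eqref{meanfieldhamiltonian}. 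Writing $\partial_xU_\beta\cdot\psi=H(x,\mu^*_t,\partial_xU_\beta,\psi)-f(x,\mu^*_t,\psi)$ with $f$ from~\eqref{meanfieldcost}, and using that $H(x,\mu,y,\psi)-\min_\alpha H(x,\mu,y,\alpha)=\tfrac12(\psi+y)^2\ge0$ (vanishing iff $\psi=-y$), these identities combine to
\[
\begin{aligned}
\int_\R f(x,\mu^*_t,\psi(x,\mu^*_t))\,\nu_t(dx)
&=\lambda_\beta-\frac{d}{dt}\int_\R U_\beta(x,\mu^*_t)\,\nu_t(dx)\\
&\quad+\frac12\int_\R\big(\psi(x,\mu^*_t)-\phi^*_\beta(x,\mu^*_t)\big)^2\nu_t(dx).
\end{aligned}
\]
Integrating on $[0,T]$, dividing by $T$ and letting $T\to\infty$, the boundary term vanishes by the second stability condition in~\eqref{meanfieldcontrolconditions}, whence $J(\psi\,|\,\mu^*_0,\bmu^*)\ge\lambda_\beta$ with equality iff $\psi\equiv\phi^*_\beta$; for $\psi=\phi^*_\beta$ one has $\nu_t=\mu^*_t$ by (ii), the squared term drops, and since $t\mapsto\int U_\beta(x,\mu^*_t)\mu^*_t(dx)$ stays bounded (monotone decay of $\mc{U}_\beta$ along its gradient flow plus the uniform second--moment bound) one concludes $J(\phi^*_\beta\,|\,\mu^*_0,\bmu^*)=\lambda_\beta=\tfrac\beta4$, consistent with Lemma~\ref{masterequation}.

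The main obstacle is Step (i): propagating $L^3$--regularity of the equilibrium density, and in particular bounding $\|m^*_t\|_{L^3}$ as $t\downarrow0$ from the mere hypothesis $\mu^*_0\in\mc{P}^3_2$ --- this is exactly what makes the local master--equation cost $\tfrac{\pi^2\beta^2}8 m^*_t(x)^2$ and the control cost genuinely integrable along competing flows, and it is where the explicit free--convolution structure of~\eqref{mkv1} and the gradient--flow estimates of~\cite{ambrosio1,carrillo1,berman2016propagation} are indispensable. A secondary technical nuisance is the chain rule $\partial_t[U_\beta(x,\mu^*_t)]=-\int\partial_\mu U_\beta(x,\mu^*_t)(z)\partial_zU_\beta(z,\mu^*_t)\mu^*_t(dz)$, since $\partial_\mu U_\beta(x,\mu)(z)=\tfrac\beta2(x-z)^{-1}$ is singular on the diagonal; I would justify it as in Lemma~\ref{lemmaformalcalcs}, testing~\eqref{mkv1} against smooth bounded truncations of $w\mapsto\log|w-x|$ and passing to the limit using the $L^\infty$ bound on $m^*_t$ for $t>0$ to legitimize the principal--value integrals.
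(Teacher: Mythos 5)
Your proposal is correct and follows essentially the same route as the paper: regularity of $\bmu^*$ from the gradient--flow/free--convolution structure via \cite{ambrosio1,carrillo1,berman2016propagation}, admissibility from the $L^3$ bound, the fixed point from uniqueness of the McKean--Vlasov flow, and optimality from the chain rule combined with the master equation \eqref{dysonmaster} and the nonnegativity of the Hamiltonian gap $\tfrac12(\psi+y)^2$. The only cosmetic differences are that the paper checks \eqref{meanfieldintegrability} via the free energy identity rather than \eqref{bloweridentitiy}, justifies the chain rule through the transport--direction derivative of Lemma \ref{chainrule} (Proposition 8.4.6 of \cite{ambrosio1}) rather than truncation of the log kernel, and uses the exponential decay of $\mc{U}_\beta(\mu^*_T)$ where boundedness, as you note, already suffices.
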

\begin{proof}
	We first note that the curve $\bmu = (\mu_t^*)_{t\geq 0}$ is in $C([0,\infty),\mc{P}^3_2(\R))$ by known regularity results: first, 
	$\mu_t^*$ remains in $\mc{P}_2(\R)$ for all $t >0$ being a gradient flow of the ``free energy'' functional $\mc{U}_\beta(\mu)$
	by Theorem 3.2.(1) of Carrillo--Ferreira--Precioso \cite{carrillo1}
	\footnote{As pointed out in Remark 5.9 of Berman--\"{O}nnheim \cite{berman2016propagation}, there is apparently an error here 
	regarding the domain of $\mc{U}_\beta$, but the positive initial density in $L^3(\R)$
	ensures sufficient regularity of the flow $(\mu_t^*)_{t \geq 0}$ for our purposes.
	}; 
	second, $\mu_t^*$ remains in $\mc{P}^3(\R)$ for all $t >0$
	by Corollary 5.3 of Biane--Speicher \cite{pbrs1} or Remark 7.7 of Biler--Karch--Monneau \cite{regularityofflow}
	after rescaling variables as in Section 2.1 of \cite{carrillo1} (see also Proposition 4.7 of Voiculescu \cite{voic1}). Now 
	Theorem 3.8 of \cite{carrillo1} implies the densities $(m^*_t(x))_{t \geq 0}$ of $(\mu_t^*)_{t\geq 0}$ satisfy the following
	nonlinear transport equation (in the distributional sense; cf. (8.1.3) of \cite{ambrosio1}):
\begin{equation} \label{mkv2}
	\partial_t m_t^*(x) 
	  + \nabla_x  \circ \left ( m_t^*(x) \phi^*_\beta(x,\mu^*_t) \right ) = 0.
\end{equation}
With the regularity of $\bmu^*$ in hand,
the integrability condition \eqref{meanfieldintegrability} from our notion of admissibility follows
from H\"{o}lder's inequality and from the \emph{free energy identity} (see Theorem 11.2.1 of \cite{ambrosio1}, Proposition 6.1 of \cite{pbrs1}, or Theorem 3.2.(4) of \cite{carrillo1}):
	$$
	\int_0^T \int_\R  \phi_\beta^*(x,\mu^*_t)^2 \mu_t^*(dx) dt = \mc{U}_\beta(\mu^*_0) - \mc{U}_\beta(\mu^*_T)  < \infty. 
	$$
Hence, we have completely checked the Definition \ref{meanfieldadmissibility} of admissibility for the feedback \eqref{mainfeedback}, i.e., $\phi_\beta^*(x,\mu) \in \mc{A}(\bmu^*)$.

	Now it suffices to check the optimality condition \eqref{mfgoptimality} 
	and the stability conditions \eqref{meanfieldcontrolconditions} $\bmu = (\mu_t^*)_{t\geq 0}$. 
	Toward this end, 
	consider an admissible and stable $\psi(x,\mu) \in \mc{A}$, and let $\bnu^\psi = (\nu_t)_{t \geq 0}$ solve \eqref{meanfielddynamics} with $\nu_0 = \mu^*_0$. 
	Then by Lemma \ref{lemmaformalcalcs} and the chain rule of Lemma \ref{chainrule} below, we have (compare with \eqref{Nverificationcalc}) 
	\begin{equation} \label{mfgverificationcalc}
\begin{aligned}
		&\int_\RR U_\beta(x, \mu^*_T) \nu_T(dx) - \int_\RR U_\beta(x,\mu^*_0)\nu_0(dx)   \\ 
		& =  \int_0^T \int_\RR \left ( \partial_xU_\beta(x,\mu_t^*) \cdot \psi(x,\mu_t^*)  
		+ \int_\R \partial_\mu U_\beta(x,\mu_t^*)(z) \phi^*_\beta(z,\mu^*_t) m_t^*(z) dz \right ) \nu_t(dx) dt    \\
		& = \int_0^T \int_\R  \left [\lambda_\beta - f(x,\mu^*_t,\psi(x,\mu^*_t)) \right ] \nu_t(dx) dt  \\
		& + \int_0^T \int_\R \left [ H(x,\mu^*_t,\partial_x U_\beta(x,\mu^*_t),\psi(x,\mu^*_t))
		- H(x,\mu^*_t,\partial_xU_\beta(x,\mu^*_t),-\partial_x U_\beta(x,\mu^*_t))  \right ] \nu_t(dx) dt,
\end{aligned}
	\end{equation}
	where the second equality uses that $U_\beta(x,\mu)$ solves the master equation \eqref{dysonmaster} for $\mu \in \mc{P}^3_2(\R)$ by Lemma \ref{masterequation}. 
	Since the difference of the Hamiltonian values in \eqref{mfgverificationcalc} is nonnegative, we have for $\psi \in \mc{A}$
	$$
	J(\psi | \mu_0^* , \bmu^*) = \limsup_{T \to \infty} \frac{1}{T} \int_0^T \int_\R f(x,\mu^*_t,\psi(x,\mu^*_t)) \nu_t(dx) dt \geq \lambda_\beta = \frac{\beta}{4}.
	$$
	To establish equality here under the feedback $\phi_\beta^*(x,\mu)$ of \eqref{mainfeedback}, 
	note for this choice the difference of the Hamiltonians in \eqref{mfgverificationcalc} vanishes, so we just need to 
	establish the second condition in \eqref{meanfieldcontrolconditions}.  
	But from Theorem 3.2.(3) of \cite{carrillo1}, we have the second moment convergence of $\mu_T^*$ to the semicircle
	law $\mu_\beta$ of \eqref{semicircledensity}
	(recall it is the minimum of $\mc{U}_\beta$ by Theorem 3 of Ben Arous--Guionnet \cite{arous1}) as well as the relative entropy type estimate
	$$
	0 \leq \mc{U}_\beta(\mu_T^*) - \mc{U}_\beta(\mu_\beta) \leq e^{-c(T-s)} [\mc{U}_\beta(\mu^*_s) - \mc{U}_\beta(\mu_\beta)]
	$$
	for every $0\leq s \leq T$ and some constant $c>0$. Combining these two facts, 
	we readily have 
	$$
	\frac{1}{T} \int_\RR U_\beta(x,\mu^*_T) \mu_T^*(dx) = \frac{1}{T} \left [   2 \cdot \mc{U}_\beta(\mu_T^*) - \frac{1}{4} \int_\R x^2 \mu_T^*(dx)  \right ] 
	\overset{T \to \infty}{\to} 0,
	$$
	as required. This completes the proof after checking the application of the chain rule for \eqref{mfgverificationcalc}.
\end{proof}
	Since by Lemma \ref{lemmaformalcalcs} 
	we interpret ``$\partial_\mu U_\beta(x,\mu)(z)$'' through the derivative \eqref{wassersteinaction} rather than as a subdifferential of minimum norm 
	(the latter characterization does not seem straightforward to realize),
	we prove the following chain rule to complement Lemma \ref{lemmaformalcalcs} and justify \eqref{mfgverificationcalc} above.
\begin{lem} \label{chainrule}
	Assume the same setting as for Theorem \ref{meanfieldverification}. Let $\psi(x,\mu) \in \mc{A}(\mu^*_\cdot)$
	so that the laws $\bnu^\psi = (\nu_t)_{t \geq 0} \in C([0,\infty),\mc{P}^3_2(\R))$ 
	have densities $n_t(x)$ satisfying (in the distributional sense) the transport equation \eqref{meanfielddynamics}.
	Then we have for any $T \geq 0$ the following chain rule:
	$$
\begin{aligned}
	\int_{\R}U_\beta(x,\mu_T^*)& \nu_T(dx) - \int_{\R}U_\beta(x,\mu_0^*)\nu_0(dx)  = \\
	& \int_0^T \int_{\R} \left ( \partial_x U_\beta(x,\mu^*_t)\psi(x,\mu^*_t)
	+ \int_{\R}   \partial_\mu U_\beta(x,\mu^*_t)(z)\phi_\beta^*(z,\mu^*_t)\mu_t(dz) \right )\nu_t(dx)dt.
\end{aligned}
	$$
\end{lem}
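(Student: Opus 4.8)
The plan is to regularize the logarithmic kernel, apply the classical chain rule along continuity equations to the resulting smooth object, and then pass to the limit. For $\epsilon>0$ I would set $\ell_\epsilon(y):=\tfrac12\log(y^2+\epsilon^2)$, so $\ell_\epsilon\in C^\infty(\R)$ with bounded derivative $\ell_\epsilon'(y)=y/(y^2+\epsilon^2)$, and $\ell_\epsilon(y)\downarrow\log|y|$, $\ell_\epsilon'(y)\to 1/y$ pointwise as $\epsilon\downarrow 0$; then define
\begin{equation*}
	U_\beta^\epsilon(x,\mu):=\frac{x^2}{4}-\frac{\beta}{2}\int_\R\ell_\epsilon(x-w)\,\mu(dw),
\end{equation*}
which is \emph{linear} in $\mu$, with bounded-gradient linear functional derivative $\tfrac{\delta U_\beta^\epsilon}{\delta\mu}(x,\mu)(z)=-\tfrac\beta2\ell_\epsilon(x-z)$, and $C^1$ in $x$ with at most linearly growing derivative $\partial_xU_\beta^\epsilon(x,\mu)=\tfrac x2-\tfrac\beta2\int_\R\ell_\epsilon'(x-w)\mu(dw)$. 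I will also write $H_\epsilon f(x):=\int_\R\ell_\epsilon'(x-z)f(z)\,dz$ for the associated truncated Hilbert transform.

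First I would prove the chain rule for $U_\beta^\epsilon$ via the two-parameter map $\Phi_\epsilon(s,t):=\int_\R U_\beta^\epsilon(x,\mu_s^*)\,\nu_t(dx)$ on $[0,T]^2$. For fixed $s$: since $x\mapsto U_\beta^\epsilon(x,\mu_s^*)$ is $C^1$ with linearly growing gradient and $(\nu_t)$ solves \eqref{meanfielddynamics} with velocity $\psi(\cdot,\mu_t^*)$ controlled by \eqref{meanfieldintegrability}, the standard chain rule for weak solutions of continuity equations gives $\partial_t\Phi_\epsilon(s,t)=\int_\R\partial_xU_\beta^\epsilon(x,\mu_s^*)\,\psi(x,\mu_t^*)\,\nu_t(dx)$. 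For fixed $t$: using linearity of $U_\beta^\epsilon(x,\cdot)$, that $(\mu_s^*)$ solves \eqref{mkv2} with velocity $\phi_\beta^*(\cdot,\mu_s^*)\in L^2(\mu_s^*)$ (finite by \eqref{bloweridentitiy} and $\mu_s^*\in\mc{P}_2(\R)$), and integrating by parts in $z$, I obtain $\tfrac{d}{ds}U_\beta^\epsilon(x,\mu_s^*)=\tfrac\beta2 H_\epsilon[\phi_\beta^*(\cdot,\mu_s^*)m_s^*](x)$, hence $\partial_s\Phi_\epsilon(s,t)=\int_\R\tfrac\beta2 H_\epsilon[\phi_\beta^*(\cdot,\mu_s^*)m_s^*](x)\,\nu_t(dx)$. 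Both partials are jointly continuous on $[0,T]^2$ (by continuity of $s\mapsto\mu_s^*$ and $t\mapsto\nu_t$ in $\mc{P}_2^3(\R)$, boundedness of $\ell_\epsilon'$, and dominated convergence), so $\tau\mapsto\Phi_\epsilon(\tau,\tau)$ is $C^1$ with derivative $\partial_s\Phi_\epsilon+\partial_t\Phi_\epsilon$ evaluated on the diagonal; integrating over $[0,T]$ gives the $\epsilon$-version of the claimed identity.

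Then I would let $\epsilon\downarrow 0$. On the left, $U_\beta^\epsilon(x,\mu_0^*),U_\beta^\epsilon(x,\mu_T^*)\to U_\beta(x,\mu_0^*),U_\beta(x,\mu_T^*)$ with domination, the relevant double integrals being finite since $\mu_0^*,\mu_T^*\in\mc{P}^3(\R)$ have log-potentials bounded on compacts while $\nu_0,\nu_T\in\mc{P}_2(\R)$ control the logarithmic growth at infinity. On the right, $\partial_xU_\beta^\epsilon(x,\mu_\tau^*)\to\tfrac x2-\tfrac\beta2 H\mu_\tau^*(x)=\partial_xU_\beta(x,\mu_\tau^*)$ and $\tfrac\beta2 H_\epsilon[\phi_\beta^*(\cdot,\mu_\tau^*)m_\tau^*](x)\to\tfrac\beta2 H[\phi_\beta^*(\cdot,\mu_\tau^*)m_\tau^*](x)$, and the last limit is precisely $\int_\R\partial_\mu U_\beta(x,\mu_\tau^*)(z)\phi_\beta^*(z,\mu_\tau^*)\mu_\tau^*(dz)$ in the sense of the definition \eqref{wassersteinaction} (note $\phi_\beta^*(\cdot,\mu_\tau^*)m_\tau^*\in L^{3/2}(\R)$ by the H\"{o}lder estimate \eqref{holder} together with the $L^3$ bounds on $m_\tau^*$ and $H\mu_\tau^*$). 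Interchanging these limits with the time integral and with the integration against $\nu_\tau$ would then be carried out by dominating the truncated transforms $H_\epsilon\mu_\tau^*$ and $H_\epsilon[\phi_\beta^*(\cdot,\mu_\tau^*)m_\tau^*]$ by the maximal Hilbert transform, invoking its boundedness on $L^3(\R)$ and $L^{3/2}(\R)$, and pairing against $\nu_\tau$ by Cauchy--Schwarz using $n_\tau\in L^3(\R)$ and the $L^2(\nu_\tau)$ bound on $\psi(\cdot,\mu_\tau^*)$ from \eqref{meanfieldintegrability} --- all uniformly in $\epsilon$ and locally uniformly in $\tau$, by the $\mc{P}_2^3(\R)$-continuity of the two flows. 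Assembling the limits delivers the asserted identity.

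The hard part is exactly this last interchange: one must produce an $\epsilon$-independent, locally-$\tau$-integrable majorant for $H_\epsilon\mu_\tau^*$ and $H_\epsilon[\phi_\beta^*(\cdot,\mu_\tau^*)m_\tau^*]$ so that $\lim_{\epsilon\downarrow 0}$ commutes with $\int_0^T(\cdot)\,d\tau$ and with $\int_\R(\cdot)\,\nu_\tau(dx)$. This is where the singular-integral estimates (boundedness of the maximal Hilbert transform on $L^{3/2}$ and $L^3$), the H\"{o}lder bound \eqref{holder}, the free-information identity \eqref{bloweridentitiy}, and the $\mc{P}_2^3$-regularity of both $(\mu_\tau^*)$ and $(\nu_\tau)$ established in the proof of Theorem \ref{meanfieldverification} all have to be combined; the rest is routine calculus along continuity equations.
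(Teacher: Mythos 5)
Your regularization-and-limit strategy is a genuinely different route from the paper's. The paper works directly with the singular objects via the AGS Wasserstein framework: the Minkowski bound \eqref{minkowski} gives $d_2$-continuity of $\mu\mapsto\int_\R U_\beta(x,\mu)\nu(dx)$; Proposition 8.4.6 of \cite{ambrosio1} gives $d_2(\mu^*_{t+\delta},(T_\delta)_\#\mu^*_t)=o(\delta)$ with $T_\delta(z)=z+\delta\phi^*_\beta(z,\mu^*_t)$ (the printed $\psi$ there is a typo), feeding directly into the transport-direction derivative characterized in Lemma \ref{lemmaformalcalcs}; and the $\nu_t$-variation is then handled by the distributional continuity equation with $L^{3/2}$ test functions (density argument via Remark 8.1.1 of \cite{ambrosio1}). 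You instead smooth the log kernel to $\ell_\epsilon$, run the classical two-parameter chain rule on $\Phi_\epsilon(s,t)=\int U^\epsilon_\beta(x,\mu_s^*)\nu_t(dx)$, and recover the singular identity by uniform integrability and maximal Hilbert transform bounds. Both approaches are sound; the paper's is shorter because it outsources the hard analysis to cited AGS theorems, while yours is more self-contained and makes the nonlocal differentiation transparent at the cost of carrying the $\epsilon\downarrow 0$ interchange, which you correctly identify as the technical core and attack with the right tools ($L^{3/2}/L^3$ duality from \eqref{holder}, maximal singular integral bounds, and the $\mc{P}_2^3$-regularity of the flows). One small refinement: since $\psi(\cdot,\mu_t^*)$ is only $L^2_{loc}$ in $t$ via \eqref{meanfieldintegrability}, $\Phi_\epsilon(s,\cdot)$ is absolutely continuous rather than $C^1$, so the diagonal derivative identity should be stated a.e.\ in $\tau$ and integrated, rather than argued via joint continuity of both partials.
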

\begin{proof}
	For any $\nu \in \mc{P}^p_2(\R)$, $1 < p <\infty$, with density $n(x)$, we have by Minkowski's inequality
	\begin{equation} \label{minkowski}
\begin{aligned}
	\left | \int_{\R} \log(|z-x|)\nu(dx) \right |\leq & \int_{|z-x|\leq 1} |\log(|z-x|)|\nu(dx)+\int_{|z-x|> 1}| \log(|z-x|)|\nu(dx)\\
	\leq& 2\Big(\int_0^1 |\log(x)|^\frac{p}{p-1}dx\Big)^\frac{p-1}{p}\|n\|_{L^p(\R)} + \int_{\R}|z-x|\nu(dx)\\
	\leq& C_p \left(\|n\|_{L^p(\R)} + z^2+\int_{\R}x^2\nu(dx)\right).
\end{aligned}
\end{equation}
for some constant $C_p>0$. 
Hence, the mapping $\mu \mapsto \int_\R U_\beta(x,\mu) \nu(dx)$ is continuous on $\mc{P}_2(\R)$ with respect to the $2$--Wasserstein distance $d_2$.
Now fix $t \geq 0$ and let $p=3$. Define $T_\delta(x) := x+\delta \psi(x,\mu^*_t)$, $\delta>0$. 
We know by Proposition 8.4.6 of Ambrosio-Gigli-Savar\'{e} \cite{ambrosio1} that $d_2(\mu_{t+\delta}^*,(T_\delta)_{\#}\mu^*_t)= o(\delta)$.
Hence, by Lemma \ref{lemmaformalcalcs} and the continuity of $\mu\mapsto \int_{\R}U_\beta(x,\mu)\nu(dx)$, we have for almost every $t \geq 0$
$$
\begin{aligned}
	\int_{\R}U_\beta (x,\mu_{t+\delta}^*)\nu(dx)-\int_{\R}U_\beta (x,\mu_t^*)\nu(dx) & =  
	\int_\R \left [ U_\beta(x,(T_\delta)_\# \mu_t^*) - U_\beta(x,\mu_t^*)  \right ] \nu(dx)  \\
	& + \int_\R U_\beta(x,\mu_{t+\delta}^*) \nu(dx)  - \int_\R U_\beta(x,(T_\delta)_\# \mu_t^*) \nu(dx) \\
	& = \delta\int_{\R} \int_\R \partial_\mu U_\beta(x,\mu_t^*)(z)\phi_\beta^*(z,\mu_t^*) \mu^*_t(dz) \nu(dx)+o(\delta).
\end{aligned}
$$
By \eqref{holder} and \eqref{threetwo}, the integrand $x \mapsto \int_\R \partial_\mu U_\beta(x,\mu_t^*)(z)\partial_xU_\beta(z,\mu_t^*) \mu^*_t(dz)$ is in $L^{3/2}(\R)$, 
and so its integral with respect to $\nu \in \mc{P}^3_2(\R)$ is finite by H\"{o}lder's inequality.
The statement then follows from the product rule and the distributional equation for $(\nu_t)_{t \geq 0}$ (along with a density argument relying on the integrability condition \eqref{meanfieldintegrability} to allow for test functions in $L^{3/2}(\RR)$; cf. Remark 8.1.1 of \cite{ambrosio1}).
\end{proof}


	\section{Recovering the master equation from the $N$-Nash system} \label{justifypassage}

\begin{proof}[Proof of Proposition \ref{conjecture}]
	First, as $N \to \infty$ we have $\mb{E}(X^i - \gamma^q)^2 \to 0$. To see this, we know by Corollary 6.3.5 of Blower \cite{gb1} that $\mu_{\beta,V}^N$ 
	satisfies a logarithmic Sobolev inequality with constant $(N-1)/\sigma^2$ and thus by Proposition 6.7.3 of the same reference satisfies a Poincar\'{e}
	inequality with the same constant, implying $\text{Var}(X^i) \to 0$ as $N \to \infty$.
	The fact that $\mb{E} X^i \to \gamma^q$ as $N \to \infty$ 
	follows from the results of Section 2.6 of \cite{agz1}, which yields the desired $L^2$ convergence. 

	Next, by Section 2.6.2 of Anderson-Guionnet-Zeitouni \cite{agz1},
we know that $\max_{1 \leq i \leq N} |X^i| = \max \{ -X^1, X^N \}$
is exponentially tight at speed $N$. This allows one to restrict
some limit results involving the empirical measure
to compact sets; more specifically, for any $p\geq 1$, we have the $p$--Wasserstein convergence $d_p(\mu_\X^N,\mu_{\beta,V}) \overset{N \to \infty}{\to} 0$ almost surely
(see the discussion after Theorem 1.12
of Chafa\"{i}-Hardy-Ma\"{i}da \cite{chafai2018concentration}).
By the Euler-Lagrange identity \eqref{eulerlagrange}, we have that
$$
\begin{aligned}
 V'(\gamma)^2 & = \beta^2 (H m_{\beta,V})^2(\gamma), \ \ \ 
\int_\R \frac{V'(\gamma) - V'(z)}{\gamma - z} m_{\beta,V}(z)dz   = \beta \left [ (H m_{\beta,V})^2(\gamma) - H[m_{\beta,V} H m_{\beta,V}](\gamma)   \right ].
\end{aligned}
$$
For fixed $N$, we can compute
	$$
	\sum_{k: k \neq i } \frac{1}{x^i - x^k} \cdot \prod_{1 \leq k < \ell \leq N} (x^{\ell} - x^{k})^{\beta/\sigma^2} = 
	\frac{\sigma^2}{\beta} \partial_{i} \left [ \prod_{1 \leq k < \ell \leq N} (x^{\ell} - x^{k})^{\beta/\sigma^2} \right ] 
	$$
	and
	$$
	\left [  \left ( \sum_{k: k \neq i } \frac{1}{x^i - x^k} \right )^2 
	- \frac{\sigma^2}{\beta} \sum_{k: k \neq i } \frac{1}{(x^i - x^k)^2}   \right ] 
	\cdot \prod_{1 \leq k < \ell \leq N} (x^{\ell} - x^{k})^{\beta/\sigma^2} = 
	\left ( \frac{\sigma^2}{\beta} \right )^2 \partial_{i}^2 \left [ \prod_{1 \leq k < \ell \leq N} (x^{\ell} - x^{k})^{\beta/\sigma^2} \right ]. 
	$$ 
	Hence, we have by integration by parts (see the proof of Lemma 4.3.17 of \cite{agz1} and Section 3 of \cite{vgmsedge})
	\begin{equation} \label{ibp1}
		\mb{E} \left [ V'(X^i) \cdot (h_1 * \mu_\X^{N,i})(X^i) \right ] = \frac{\mb{E}V'(X^i)^2}{\beta} - \frac{\sigma^2 \mb{E}V''(X^i)}{\beta(N-1)}  
		 \overset{N \to \infty}{\to} \frac{V'(\gamma^q)^2}{\beta}
		 = \beta (H m_{\beta,V})^2(\gamma^q),
	\end{equation}
	and similarly
	\begin{equation} \label{ibp2}
		\mb{E} \left [ [(h_1 * \mu^{N,i}_\X)(X^i)]^2 - \frac{\sigma^2}{\beta} \frac{(h_2 * \mu^{N,i}_\X)(X^i)}{N-1} \right ] 
		\overset{N \to \infty}{\to}  \frac{V'(\gamma^q)^2}{\beta^2} = (H m_{\beta,V})^2(\gamma^q),
	\end{equation}
	where the limits hold by the $p$-Wasserstein convergence 
	and the polynomial growth of $V$.
	
	Now let $\mc{L}_{\beta,V}$ be the generator of \eqref{generalizeddbm} and
	take $v^{N,i}_{\beta, V}(\x):= \frac{V(x^i)}{2} - \frac{\beta}{2} (h_0 * \mu^{N,i}_\x)(x^i)$, $\x \in \mc{W}^N$, in the invariance identity $\mu_{\beta,V}^N[\mc{L}_{\beta,V}( v^{N,i}_{\beta, V})] = 0$
	(this further application of integration by parts along with Lemma \ref{Nsubsolution} is behind the optimal cost calculation of
	Theorem \ref{verificationthm} for the Gaussian case; see \eqref{ergodiclimit}, \eqref{Nverificationcalc}.
	Indeed, to see that $v^{N,i}_{\beta, V}$ is in the domain of the generator $\mathcal{L}_{\beta,V}$, note that the dynamics \eqref{generalizeddbm} are nonexplosive by Corollary 6.9 of Graczyk--Ma{\l}ecki \cite{pgjm1},
and a calculation similar to \eqref{lyapunovsde} and \eqref{finiteh2} confirms admissibility of \eqref{generalizeddbm} for the Dyson game, and thus the required integrability, when $\beta>\sigma^2$.)
	Calculating as in the proof of Lemma \ref{Nsubsolution} and letting $N \to \infty$ in this invariance identity, we have 
\begin{align}
	\frac{\beta^2}{4}  [ (H m_{\beta,V})^2(\gamma^q) & - H[m_{\beta,V} H m_{\beta,V}](\gamma^q)   ]  = \frac{\beta}{4} \int_\R \frac{V'(\gamma^q) - V'(z)}{\gamma^q - z} m_{\beta,V}(z)dz \nonumber \\
	&
	=\lim_{N \to \infty} \frac{\beta^2}{8} \mb{E} \left [ [(h_1 * \mu^{N,i}_\X)(X^i)]^2 - \frac{\sigma^2}{\beta} \frac{(h_2 * \mu^{N,i}_\X)(X^i)}{N-1}  \right] \nonumber
	\\
	&+ \lim_{N \to \infty} \mb{E} \left [ \frac{V'(X^i)^2}{4} - \frac{\beta}{4} V'(X^i) \cdot (h_1*\mu^{N,i}_\X)(X^i) 
	+ \frac{3\beta}{8} (\beta-\sigma^2)\frac{(h_2 * \mu^{N,i}_\X)(X^i)}{N-1} \right ] \nonumber \\
	& = \frac{\beta^2}{8} \cdot \frac{V'(\gamma^q)^2}{\beta^2} + \frac{V'(\gamma^q)^2}{4} - \frac{\beta}{4} \cdot \frac{V'(\gamma^q)^2}{\beta}  
	+ \frac{3\beta}{8} (\beta-\sigma^2)  \lim_{N \to \infty}  \frac{\mb{E}(h_2 * \mu^{N,i}_\X)(X^i)}{N-1} \nonumber \\
		& = \frac{ \beta^2}{8}(H m_{\beta,V})^2(\gamma^q) + \frac{3\beta}{8} (\beta-\sigma^2)  \lim_{N \to \infty}  \frac{\mb{E}(h_2 * \mu^{N,i}_\X)(X^i)}{N-1}. \nonumber
\end{align}
	Subtracting the term ``$\frac{ \beta^2}{8}(H m_{\beta,V})^2(\gamma^q)$'' from 
	both sides of this last expression, we arrive at
	\begin{equation} \label{generallimit}
		\lim_{N \to \infty} \frac{\mb{E} (h_2*\mu^{N,i}_\X)(X^i)}{N-1} = \frac{8}{3\beta(\beta-\sigma^2)} \cdot 
		 	\frac{\beta^2}{8}  \left [ (H m_{\beta,V})^2(\gamma^q) - 2H[m_{\beta,V} H m_{\beta,V}](\gamma^q)    \right ] = \frac{\pi^2\beta}{3(\beta-\sigma^2)} m_{\beta,V}(\gamma^q)^2,
	\end{equation}
	where the last equality follows by an application the Hilbert transform product rule \eqref{productformula} 
	since $m_{\beta,V}(x) \in L^2(\R)$ (item (2) of Theorem 2.2 in Carton--Lebrun \cite{cl1}).
This completes the proof.
\end{proof}
	Observe the control term has vanishing expectation in optimal equilibrium, i.e., $\mu^N_\beta \left [ \partial_i v^{N,i}_\beta \right ]=0$, 
	and by Euler--Lagrange \eqref{eulerlagrange}, the analogous mean field identity holds exactly for the semicircle law $\mu_\beta$, i.e.,
	$\partial_x U_\beta(x,\mu_\beta) = 0$ for $x \in \text{supp}(\mu_\beta)$.  
	Nevertheless, the next theorem shows that as $N \to \infty$ the control cost term ``$\mu_\beta^N \left [\frac{1}{2}(\partial_i v^{N,i}_\beta)^2 \right]$'' 
	still contributes a local term in the bulk, 
	i.e., at a location $\gamma^q$ with $q \in (0,1)$. We will now see that the calculation of this contribution
	will establish Theorem \ref{mainresult}, one of the main results of this paper.
\begin{proof}[Proof of Theorem \ref{mainresult}]
First note by the Euler--Lagrange equation \eqref{eulerlagrange}, we have
$$
\begin{aligned}
|\partial_x U_\beta(x,\mu_{\beta,V})|^2 & = \left (  \frac{\beta}{2} H\mu_{\beta,V}(x) - \frac{x}{2}  \right )^2 =
\left (\frac{V'(x)}{2}  - \frac{x}{2}  \right )^2 = \frac{V'(x)^2}{4}  + \frac{x^2}{4} - \frac{x V'(x)}{2}, \ \ x \in \text{supp} (\mu_{\beta,V}), 
\end{aligned}
$$
Let $\X \sim \mu^N_{\beta,V}$. Then we can compute
$$
\begin{aligned}
		 \mu_{\beta,V}^N \left [ (\partial_i v^{N,i}_\beta)^2 \right ] 
		&= \frac{\beta^2}{4} \mb{E} \left [  [(h_1 * \mu_\X^{N,i})(X^i)]^2
		- \frac{\sigma^2}{\beta} \frac{(h_2* \mu_\X^{N,i})(X^i)}{N-1}  \right ] \\
		& + \mb{E} \left [ \frac{(X^i)^2}{4} - \frac{\beta}{2} X^i \cdot (h_1 * \mu_\X^{N,i})(X^i) 
		+ \frac{\beta \sigma^2}{4} \frac{(h_2* \mu_\X^{N,i})(X^i)}{N-1}  \right ]
		\\
		& \overset{N \to \infty}{\to} \frac{\beta^2}{4}\cdot \frac{V'(\gamma^q)^2}{\beta^2} 
		+ \frac{(\gamma^q)^2}{4}  - \frac{\beta}{2}\cdot \frac{\gamma^q V'(\gamma^q)}{\beta} 
		+ \frac{\pi^2 \beta^2 \sigma^2}{12(\beta - \sigma^2)} m_\beta(\gamma^q)^2 \\
		& = |\partial_xU_\beta(\gamma^q,\mu_{\beta,V})|^2 + \frac{\pi^2 \beta^2 \sigma^2}{12(\beta - \sigma^2)} m_\beta(\gamma^q)^2, 
\end{aligned}
		$$
where we have used \eqref{singularcostlimit}, \eqref{ibp2} and the analog of the calculation for \eqref{ibp1}.
The limiting expression for the diffusion term is immediate by \eqref{singularcostlimit}. Finally, using
these two limit calculations and the $N$--Nash system \eqref{dysonnash} itself, we have 
\pagebreak
\begin{align}
 \mu_{\beta,V}^N &  \left [ \sum_{k:k\neq i} \partial_k v^{N,k}_\beta \partial_k v^{N,i}_\beta \right ] \nonumber \\ 
 &=  \mu_{\beta,V}^N \left [\frac{\sigma^2}{2(N-1)} \Delta_\x v^{N,i}_\beta  -  \frac{1}{2} (\partial_i v^{N,i}_\beta)^2 
+  \frac{(x^i)^2}{8} + \frac{\beta}{2} \left ( \frac{3}{4} \beta - \sigma^2 \right ) \frac{(h_2 *\mu^{N,i}_\X)(x^i)}{N-1}   \right ] - \lambda_\beta^{N,i} \nonumber \\
& \overset{N \to \infty}{\to} \frac{\pi^2 \beta^2 \sigma^2}{8(\beta - \sigma^2)} m_{\beta,V}(\gamma^q)^2 
-  \frac{1}{2} |\partial_xU_\beta(\gamma^q,\mu_{\beta,V})|^2 
+  \frac{(\gamma^q)^2}{8} + \frac{\beta}{2} \left ( \frac{3}{4} \beta - \sigma^2 \right ) 
\cdot \frac{\pi^2 \beta}{3(\beta-\sigma^2)} m_{\beta,V}(\gamma^q)^2 - \frac{\beta}{4} \nonumber \\
& =\frac{\pi^2 \beta^2 \sigma^2}{12(\beta - \sigma^2)} m_{\beta,V}(\gamma^q)^2 
+ \left (\frac{(\gamma^q)^2}{8} + \frac{\pi^2 \beta^2}{8} m_{\beta,V}(\gamma^q)^2 - \frac{1}{2} |\partial_xU_\beta(\gamma^q,\mu_{\beta,V})|^2 - \frac{\beta}{4}  \right ). \nonumber
\end{align}
Since compactness of support implies $\mu_{\beta,V}$ has finite second moment,
we can use Lemma \ref{masterequation}, which confirms the pair $(U_\beta(x,\mu), \frac{\beta}{4})$ 
satisfies the master equation \eqref{dysonmaster} on $\R \times \mc{P}^2_2(\R)$. Hence, the terms in the parentheses 
become the desired expression, completing the proof.
\end{proof}

		We view the convergence in \eqref{controlcontributiongeneral} 
		as a localized version of the heuristic limit (1.4.4) of Biane--Speicher \cite{pbrs1} 
		for relative free Fisher information \eqref{relativeinfo} (cf. Remark \ref{voicconsistency}). 
		Their limit (1.4.4) under the (Nash--optimal) Gaussian ensemble $\mu_\beta^N$ of \eqref{invariantmeasure} 
		is actually not difficult to compute directly using the argument of Remark \ref{averageindex},
	but we provide a more general computation as the first item of the following corollary of our work, which 
	  confirms the analogous convergence of equations for the auxiliary global problem associated to the open loop model.

\begin{cor} \label{bianespeichercomputation} 
	Assume $\beta > \sigma^2>0$ and recall the definitions \eqref{invariantmeasuregeneral} of $\mu_{\beta,V}^N$ 
	and \eqref{eulerlagrange} of $\mu_{\beta,V}$. Let $\bar{X} \sim \mu_{\beta,V}$. Then we have the following asymptotic contributions:
\begin{enumerate} 
	\item The drift term contributes
		$$
\begin{aligned}
		\lim_{N \to \infty} \mu_{\beta,V}^N \left [  \frac{1}{N}\Vert \nabla_\bx W_\beta \Vert^2  \right ] &
		  =\frac{\pi^2 \beta^2 \sigma^2}{12(\beta-\sigma^2)} \mb{E}m_{\beta,V}(\bar{X})^2 + \frac{1}{2} \int_\R | \partial_\mu \mc{U}_\beta(\mu_{\beta,V})(x)|^2\mu_{\beta,V}(dx)
\end{aligned}
		$$	
	
	\item The diffusion term contributes
		$$
		\lim_{N \to \infty} \mu_{\beta,V}^N \left [ - \frac{\sigma^2}{2(N-1)} \cdot \frac{1}{N} \Delta_\x W_\beta \right ]
		=    - \frac{\pi^2 \beta^2 \sigma^2}{12(\beta-\sigma^2)} \mb{E}  m_{\beta,V}(\bar{X})^2.
		$$
\end{enumerate}
	Consequently, the ergodic HJB equation \eqref{opendysonnash} with $(W_\beta(\x),\lambda_\beta^N)$
	associated to the open loop game converges against $(\mu_{\beta,V}^N)_{N \geq 2}$ to 
	the Hamilton--Jacobi equation \eqref{openloopmasterequation} with $(\mc{U}_\beta(\mu),\frac{\beta}{8})$ at $\mu_{\beta,V} \in \mc{P}^3_2(\R)$. 
\end{cor}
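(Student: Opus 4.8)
The plan is to integrate the pointwise identity \eqref{opendysonnash} --- valid on all of $\mc{W}^N$ for the pair $(W_\beta,\lambda^N_\beta)$ by Lemma \ref{openloopehjbtheorem} --- against the locally optimal ensemble $\mu^N_{\beta,V}$ and to pass to the limit term by term, matching each piece with the corresponding object in \eqref{openloopmasterequation} evaluated at $\mu_{\beta,V}$. The key reduction is that $\partial_i W_\beta(\x)=\partial_i v^{N,i}_\beta(\x)$, so that $\tfrac1N\Vert\nabla_\x W_\beta\Vert^2=\tfrac1N\sum_{i=1}^N(\partial_i v^{N,i}_\beta)^2$ and $\tfrac1N\Delta_\x W_\beta=\tfrac12+\tfrac{\beta}{2}\cdot\tfrac1N\sum_{i=1}^N(h_2*\mu^{N,i}_\x)(x^i)$; thus every contribution is governed by the single averaged quantity $a_N:=\tfrac1N\sum_{i=1}^N\mu^N_{\beta,V}[\tfrac{(h_2*\mu^{N,i}_\X)(X^i)}{N-1}]$ together with $\tfrac1N\sum_i\mb{E}(X^i)^2$. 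I would first record that $\tfrac1N\sum_i\mb{E}(X^i)^2\to\mb{E}\bar X^2$, using the $p$--Wasserstein convergence $d_p(\mu^N_\X,\mu_{\beta,V})\to 0$ and the uniform moment control from the logarithmic Sobolev inequality (Corollary 6.3.5 of Blower \cite{gb1}) and the polynomial growth of $V$, as at the start of the proof of Proposition \ref{conjecture}. The diffusion term is then immediate, since $\mu^N_{\beta,V}[-\tfrac{\sigma^2}{2(N-1)}\cdot\tfrac1N\Delta_\x W_\beta]=-\tfrac{\sigma^2}{4(N-1)}-\tfrac{\beta\sigma^2}{4}a_N$, which gives item $2$. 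For item $1$ I would expand the square and invoke the \emph{exact} algebraic identities \eqref{keyidentity1} and \eqref{keyidentity2}, which collapse the cross term to a constant and identify the squared--Hilbert term with $a_N$, so that $\mu^N_{\beta,V}[\tfrac1N\Vert\nabla_\x W_\beta\Vert^2]=\tfrac14\cdot\tfrac1N\sum_i\mb{E}(X^i)^2-\tfrac{\beta}{4}+\tfrac{\beta^2}{4}a_N$; comparing this with the expansion of $\int_\R|\partial_\mu\mc{U}_\beta(\mu_{\beta,V})(x)|^2\mu_{\beta,V}(dx)$ through \eqref{formalcalcs}, the identity $\int_\R x H\mu_{\beta,V}(x)\mu_{\beta,V}(dx)=\tfrac12$ from the proof of Lemma \ref{masterequation}, and the Voiculescu identity \eqref{bloweridentitiy}, then yields item $1$ once $\lim a_N$ is in hand.

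The crux is therefore the evaluation of $\lim_{N\to\infty}a_N$, which I expect to be the main obstacle. The most economical route is to average Proposition \ref{conjecture} over indices: for any sequence $i=i(N)$ with $i/N\to q$ one has $\mu^N_{\beta,V}[\tfrac{(h_2*\mu^{N,i}_\X)(X^i)}{N-1}]\to\tfrac{\pi^2\beta}{3(\beta-\sigma^2)}m_{\beta,V}(\gamma^q)^2$, and since $\gamma^U\sim\mu_{\beta,V}$ when $U$ is uniform on $[0,1]$, this would give $\lim a_N=\tfrac{\pi^2\beta}{3(\beta-\sigma^2)}\int_0^1 m_{\beta,V}(\gamma^q)^2\,dq=\tfrac{\pi^2\beta}{3(\beta-\sigma^2)}\mb{E}m_{\beta,V}(\bar X)^2=\tfrac{\pi^2\beta}{3(\beta-\sigma^2)}\int_\R m_{\beta,V}^3$. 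The delicate point is interchanging this Ces\`{a}ro average with the limit: it requires a bound on $\sup_{N}\sup_{1\le i\le N}\mu^N_{\beta,V}[\tfrac{(h_2*\mu^{N,i}_\X)(X^i)}{N-1}]$ (or at least uniform integrability of this array along the averaging) together with continuity and boundedness of $q\mapsto m_{\beta,V}(\gamma^q)^2$ on $[0,1]$; both are plausible from the regularity of $\mu_{\beta,V}$ (Theorems 4.4.1, 4.4.3.(i) and 5.4.3 of Blower \cite{gb1}) and from the a priori estimates behind \eqref{finiteh2}, but deserve particular care at the spectral edges $q\in\{0,1\}$. An alternative that avoids the interchange is to repeat the invariance / integration--by--parts argument of the proof of Proposition \ref{conjecture}, but applied to $\tfrac1N\sum_i v^{N,i}_{\beta,V}$ in place of a single $v^{N,i}_{\beta,V}$: the Euler--Lagrange equation \eqref{eulerlagrange}, the $p$--Wasserstein convergence of $\mu^N_\X$ and the Hilbert transform product rule \eqref{productformula} then produce the same cancellations as in \eqref{generallimit}, directly yielding $\lim a_N=\tfrac{\pi^2\beta}{3(\beta-\sigma^2)}\int_\R m_{\beta,V}^3$.

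Finally, with the three contributions available, I would assemble the limit as in the proof of Theorem \ref{mainresult}. On the right of \eqref{opendysonnash} one has $\tfrac1N\mu^N_{\beta,V}[F^N]\to\tfrac18\mb{E}\bar X^2+\tfrac{C_2}{2}\lim a_N$ and $\lambda^N_\beta\to\tfrac{\beta}{8}$; substituting $C_2=\beta(\beta-2\sigma^2)/4$ from Lemma \ref{openloopehjbtheorem}, the $\sigma^2$--dependent pieces produced by the diffusion term and by half of the drift term combine with the $C_2$--term to leave exactly $\tfrac{\pi^2\beta^2}{24}\int_\R m_{\beta,V}^3$, while the surviving kinetic piece of half the drift contribution is precisely $\tfrac12\int_\R|\partial_\mu\mc{U}_\beta(\mu_{\beta,V})(x)|^2\mu_{\beta,V}(dx)$. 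Hence the limiting identity is \eqref{openloopmasterequation} with $(\mc{U}_\beta(\mu),\tfrac{\beta}{8})$ at $\mu_{\beta,V}\in\mc{P}^3_2(\R)$, membership in $\mc{P}^3_2(\R)$ being guaranteed by the compact support and $L^3$ regularity recalled after \eqref{eulerlagrange}. This realizes, for the global problem auxiliary to the open loop model, the analogue of the passage from the closed loop $N$--Nash system to the Voiculescu--Wigner master equation carried out in the proof of Theorem \ref{mainresult}.
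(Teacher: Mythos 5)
Your route is the natural one and, as far as I can tell, the one the paper intends (it gives no separate proof of this corollary): reduce everything via $\partial_i W_\beta=\partial_i v^{N,i}_\beta$ and the exact identities \eqref{keyidentity1}, \eqref{keyidentity2} to the single averaged quantity $a_N$, evaluate $\lim a_N$, and assemble as in Theorem \ref{mainresult}. Your choice of route (b) for $\lim a_N$ is the right one: averaging Proposition \ref{conjecture} over indices does require a uniform--integrability justification for the interchange that the paper never supplies, whereas the invariance identity for the global potential combined with \eqref{keyidentity2}, the integration--by--parts identity \eqref{ibp1}, Euler--Lagrange \eqref{eulerlagrange} and \eqref{bloweridentitiy} gives $\lim a_N=\tfrac{1}{\beta(\beta-\sigma^2)}\int V'(x)^2\mu_{\beta,V}(dx)=\tfrac{\pi^2\beta}{3(\beta-\sigma^2)}\int m_{\beta,V}^3$ directly; this is exactly the mechanism of Remark \ref{averageindex} extended to general $V$.

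One point you should not gloss over: your own exact formula
$$
\mu^N_{\beta,V}\Bigl[\tfrac{1}{N}\Vert\nabla_\x W_\beta\Vert^2\Bigr]=\frac{1}{4N}\sum_{i=1}^N\mb{E}(X^i)^2-\frac{\beta}{4}+\frac{\beta^2}{4}a_N
$$
does \emph{not} yield item 1 as printed. Passing to the limit and splitting $\tfrac{\pi^2\beta^3}{12(\beta-\sigma^2)}=\tfrac{\pi^2\beta^2}{12}+\tfrac{\pi^2\beta^2\sigma^2}{12(\beta-\sigma^2)}$, then using $\int xH\mu_{\beta,V}\,\mu_{\beta,V}(dx)=\tfrac12$ and \eqref{bloweridentitiy}, gives
$$
\lim_{N\to\infty}\mu^N_{\beta,V}\Bigl[\tfrac{1}{N}\Vert\nabla_\x W_\beta\Vert^2\Bigr]=\frac{\pi^2\beta^2\sigma^2}{12(\beta-\sigma^2)}\mb{E}m_{\beta,V}(\bar X)^2+\int_\R|\partial_\mu\mc{U}_\beta(\mu_{\beta,V})(x)|^2\mu_{\beta,V}(dx),
$$
with coefficient $1$, not $\tfrac12$, on the Wasserstein term (the two versions coincide only when $V(x)=x^2/2$, where Euler--Lagrange makes $\partial_\mu\mc{U}_\beta(\mu_\beta)$ vanish on the support). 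The coefficient--$1$ version is the one consistent with item 2, with averaging \eqref{controlcontributiongeneral} over $q$, and with the final convergence claim — and indeed your third paragraph implicitly uses it, since you take the kinetic piece of \emph{half} the drift contribution to be $\tfrac12\int|\partial_\mu\mc{U}_\beta|^2\mu_{\beta,V}$. So the mathematics of your assembly is right, but the sentence ``then yields item 1'' is not: you should either correct the printed constant or state the result for $\tfrac{1}{2N}\Vert\nabla_\x W_\beta\Vert^2$ with \emph{both} right--hand coefficients halved. Everything else — the treatment of the second moments, the diffusion term, and the matching of the residual local terms $-\tfrac{\pi^2\beta^2\sigma^2}{24(\beta-\sigma^2)}\int m_{\beta,V}^3$ on the two sides after substituting $C_2=\beta(\beta-2\sigma^2)/4$ — is correct.
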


\section{The Coulomb Game} \label{coulombgame}

	In this section, we do not pursue the details of verification theorems for the implicit $N$ player or mean field game formulations of the Coulomb game. 
	Such results were already exemplified in detail for the one dimensional case
	and it is clear how such statements would generalize.
Our first lemma explains how the middle term of a player's cost \eqref{2dstatecost}, which incentivizes collinearity, arises.
Recall the definitions \eqref{coulombtransform}, \eqref{coulombtransformproduct} of $\mathcal{H}\mu(z)$
and $\mathcal{H}[\mu\mathcal{H}\mu](z)$, respectively.
\begin{lem}
	\label{circumcircle}
For any $z,w,u \in \RR^2$, we have the identity
\begin{equation}
	\label{circumcircleidentity}
	\begin{aligned}
	\left \langle \frac{z-w}{|z-w|^2} , \frac{z-u}{|z-u|^2} \right \rangle -  
	\left \langle \frac{z-w}{|z-w|^2} , \frac{w-u}{|w-u|^2} \right \rangle - \left \langle \frac{z-u}{|z-u|^2} , \frac{u-w}{|u-w|^2} \right \rangle =   \frac{2}{D^2(z-w,z-u)}.
	\end{aligned}
\end{equation}
In particular, for any $\mu \in \mc{P}^p_2(\R^2)$, $p >2$, 
$$
	| \mathcal{H}\mu(z) |^2 - 2 \mathcal{H}[\mu\mathcal{H}\mu](z)
= \int \int_{ \substack{w,u \in \RR^2 \\ w \neq u}}\frac{2\mu(dw) \mu(du)}{D^2(z-w,z-u)}.
$$
\end{lem}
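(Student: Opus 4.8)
The plan is to first establish the pointwise identity \eqref{circumcircleidentity} by a direct computation, and then obtain the measure-theoretic statement by integrating it against $\mu(dw)\,\mu(du)$ after a symmetrization in $(w,u)$.

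For the pointwise identity I would set $a:=z-w$ and $b:=z-u$, so that $w-u=b-a$, $u-w=a-b$, and $|w-u|=|a-b|$; I assume $z,w,u$ pairwise distinct so that $a,b,a-b$ are nonzero (if two of the points coincide the expression is undefined, and if the three points are collinear both sides vanish, the left one since $|a|^2|b|^2-\langle a,b\rangle^2=0$ by the Cauchy--Schwarz equality case and the right one because the circumdiameter is then infinite). Expanding the three inner products by bilinearity and using $\langle a,b-a\rangle=\langle a,b\rangle-|a|^2$, $\langle b,a-b\rangle=\langle a,b\rangle-|b|^2$, and $|a-b|^2=|a|^2+|b|^2-2\langle a,b\rangle$, a short calculation collapses the left-hand side of \eqref{circumcircleidentity} to
$$
\frac{\langle a,b\rangle}{|a|^2|b|^2}-\frac{\langle a,b\rangle\bigl(|a|^2+|b|^2\bigr)-2|a|^2|b|^2}{|a|^2|b|^2|a-b|^2}
=\frac{2\bigl(|a|^2|b|^2-\langle a,b\rangle^2\bigr)}{|a|^2|b|^2|a-b|^2}.
$$
It then remains to recognize the right side as $2/D^2(a,b)$: the triangle with vertices $0,a,b$ has side lengths $|a|,|b|,|a-b|$ and area $K=\tfrac12\sqrt{|a|^2|b|^2-\langle a,b\rangle^2}$ by Lagrange's identity (the square of the planar cross product), so the classical circumradius formula $R=(\text{product of side lengths})/(4K)$ gives the circumdiameter $D(a,b)=2R=\frac{|a|\,|b|\,|a-b|}{\sqrt{|a|^2|b|^2-\langle a,b\rangle^2}}$, whence $2/D^2(a,b)$ is exactly the displayed fraction. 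Since $D(z-w,z-u)$ is by definition the circumdiameter of the triangle on $z-w,z-u,(0,0)$, this proves \eqref{circumcircleidentity}.

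For the ``in particular'' claim, fix $\mu\in\mc{P}_2^p(\R^2)$ with $p>2$. Hölder's inequality together with the local integrability of $|\cdot|^{-q}$ at the origin in $\R^2$ for $q=p/(p-1)<2$ shows that $\mathcal{H}\mu$ is well defined and bounded, and that the double integrals below converge absolutely, so Fubini applies throughout. One has
$$
|\mathcal{H}\mu(z)|^2=\int\!\!\int \Bigl\langle \tfrac{z-w}{|z-w|^2},\tfrac{z-u}{|z-u|^2}\Bigr\rangle \mu(dw)\,\mu(du),
$$
while, using the symmetry of $\mu(dw)\,\mu(du)$ under the exchange $w\leftrightarrow u$ in definition \eqref{coulombtransformproduct},
$$
2\,\mathcal{H}[\mu\mathcal{H}\mu](z)=\int\!\!\int\left(\Bigl\langle \tfrac{z-w}{|z-w|^2},\tfrac{w-u}{|w-u|^2}\Bigr\rangle+\Bigl\langle \tfrac{z-u}{|z-u|^2},\tfrac{u-w}{|u-w|^2}\Bigr\rangle\right)\mu(dw)\,\mu(du).
$$
Subtracting and applying \eqref{circumcircleidentity} under the integral sign yields $|\mathcal{H}\mu(z)|^2-2\mathcal{H}[\mu\mathcal{H}\mu](z)=\int\!\!\int_{w\neq u} 2D^{-2}(z-w,z-u)\,\mu(dw)\,\mu(du)$; here the diagonal $\{w=u\}$ carries no $\mu\otimes\mu$-mass since $\mu$ has a density, so it may be freely included on the left and excluded on the right (where $D$ is only defined off it).

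The main obstacle is the geometric step: recognizing the brute-force rational expression as $2/D^2$ via the circumradius formula and Lagrange's identity. Everything else is routine algebra or a standard Hölder--Fubini argument.
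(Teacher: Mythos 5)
Your proof is correct and follows essentially the same route as the paper's: the same reduction over a common denominator to $2(|a|^2|b|^2-\langle a,b\rangle^2)/(|a|^2|b|^2|a-b|^2)$, and the same H\"older/Fubini argument for the integrated statement. The only cosmetic difference is that the paper identifies the final expression with $2/D^2$ via $2\sin^2\theta_{a,b}/|a-b|^2$ and the extended law of sines, while you invoke the equivalent circumradius formula $R=abc/(4K)$ together with Lagrange's identity for the area.
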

\begin{proof}
The left hand side of \eqref{circumcircleidentity} can be written
	$$
	\frac{1}{|z-w|^2 |z-u|^2|w-u|^2} \cdot \left [ |w-u|^2 \langle z-w, z-u \rangle 
	 - |z-u|^2 \langle z-w, w-u \rangle - |z-w|^2 \langle z-u, u-w \rangle  \right ].
	$$
	Expanding the factor $|w-u|^2 =|(w-z)+(z-u)|^2$ in the first term inside the
	 square brackets gives
	$$
	\frac{2\left [ |z-w|^2|z-u|^2 - \langle z-w , z-u \rangle^2 \right ]}{|z-w|^2 |z-u|^2|w-u|^2}
	 = \frac{2 \sin^2 \theta_{z-w,z-u}}{|w-u|^2} = \frac{2}{D^2(z-w,z-u)},
	$$
	where $\theta_{z-w,z-u}$ is the angle between the vectors $z-w$ and $z-u$. For the last claim,
	note that for $\mu\in \mathcal{P}_2^p(\R^2)$, $p>2$, we have that $\mathcal{H}\mu$ is uniformly bounded:
\begin{align*}
	|\mathcal{H}\mu(z)|\leq \left(\int_{\R^2: |w|<1}|w|^{\frac{-p}{p-1}}dw\right)^{\frac{p-1}{p}}  \|m\|_{L^p(\R^2)}+\Big|\int_{\R^2: |z|\geq 1}\mu(dy)\Big|.
\end{align*}
	This completes the proof.
\end{proof}

\subsection*{Closed loop model}

\begin{defn} \label{2dclosedadmissibility}
A feedback profile $\boldsymbol{\phi} : (\R^2)^N \to (\R^2)^N$ is \emph{admissible} if for every $\bz_0 \in \mc{D}^N$, 
there exists a unique strong solution $(\bZ_t)_{t \geq 0} = (\bZ^{\boldsymbol{\phi}}_t)_{t \geq 0}$ to the stochastic differential equation 
		\begin{equation} \label{2dclosedNdynamics}
			d\bZ_t = \boldsymbol{\phi}(\bZ_t) dt + \frac{\sigma}{\sqrt{N-1}} d\mathbf{B}_t, \ \ \bZ_0 = \bz_0, \ \ \sigma \geq 0 
		\end{equation} 
		satisfying the integrability condition
		\begin{equation} \label{closedadmissibleintegrability}
			\mb{E} \int_0^T \left [ \Vert \boldsymbol{\alpha}_t \Vert^2
				+ \Vert \bZ_t \Vert^2 + \sum_{1 \leq k < \ell \leq N} \frac{1}{|Z^\ell_t - Z^k_t|^2} 
			\right ]
			dt < \infty, \ \text{for all} \ \  T > 0.
		\end{equation} 
We denote the class of such admissible strategies by $\mc{A}^{(N)}$.  
\end{defn}

\begin{defn}
	The $N$ player dynamic game associated with
	the player state costs \eqref{2dstatecost} in the ergodic regime
	as in Section \ref{dysongame} is called
	the \emph{Coulomb game}.
If in addition we impose the notion of admissibility Definition \ref{2dclosedadmissibility}, we call it the \emph{closed loop (or Markovian) model for the Coulomb game}.
\end{defn}

\subsection*{Open loop model}

\begin{defn} \label{2dopenadmissibility}
A profile $(\boldsymbol{\alpha}_t)_{t \geq 0} =((\alpha^1_t, \ldots, \alpha^N_t))_{t \geq 0}$ of $\R^2$--valued processes is \emph{admissible} if
it is $\mb{F}$--progressively measurable and for every $\bz_0 \in \mc{D}^N$, 
the process $(\bZ_t)_{t \geq 0} = (\bZ^{\boldsymbol{\alpha}}_t)_{t \geq 0}$ defined by 
		\begin{equation} \label{2dopenNdynamics}
			d\bZ_t = \boldsymbol{\alpha}_t dt + \frac{\sigma}{\sqrt{N-1}} d\mathbf{B}_t, \ \ \bZ_0 = \bz_0, \ \ \sigma \geq 0 
		\end{equation} 
		satisfies the integrability condition
		\begin{equation} \label{openadmissibleintegrability}
			\mb{E} \int_0^T \left [ \Vert \boldsymbol{\alpha}_t \Vert^2
				+ \Vert \bZ_t \Vert^2 + \sum_{1 \leq k < \ell \leq N} \frac{1}{|Z^\ell_t - Z^k_t|^2} 
			\right ]
			dt < \infty, \ \text{for all} \ \  T > 0.
		\end{equation} 
We denote the class of such admissible strategies by $\mb{A}^{(N)}$.  
\end{defn}
\begin{defn}
	The $N$ player game dynamic game associated with
	the player state costs \eqref{2dstatecost} in the ergodic regime
	as in Section \ref{dysongame} and with the notion of admissibility Definition \ref{2dopenadmissibility} is called
	the \emph{open loop model for the Coulomb game}.
\end{defn}
\subsection*{Potential structure}
By a slight abuse, we continue to write the player cost
as $J^{N,i}(\psi |\z_0, \boldsymbol{\phi}^{-i})$ for the closed loop model or 
as $J^{N,i}(\boldsymbol{\alpha} |\z_0, \boldsymbol{\alpha}^{-i})$ for the open loop model, 
where we use the state cost $F^{N,i}(\bz)$ of \eqref{2dstatecost}.

\begin{lem} \label{2dpotentialgame}
Fix $C>0$ and define the global state cost
\begin{equation}
	\label{2dglobalcost}
	F^N(\bz) := \frac{\Vert \bz \Vert^2}{8} + C  \sum_{i=1}^N \left | (h_1 * \mu^{N,i}_\bz)(z^i) \right |^2 = \frac{\Vert \bz \Vert^2}{8} + \frac{C}{(N-1)^2}  \sum_{i=1}^N \left | \sum_{k : k\neq i} \frac{z^i - z^k}{|z^i-z^k|^2} \right |^2, \ \ \bz \in \mc{D}^N.
\end{equation}
Consider the optimization problem of minimizing
\begin{equation}
	\label{2dopencostfunctional}
J^N(\boldsymbol{\alpha} | \bz_0) :=  \limsup_{T \to \infty} \frac{1}{T} \mb{E} \int_0^T  \left [  \frac{1}{2} \Vert \boldsymbol{\alpha}_t \Vert^2 + F^N(\bZ_t) \right ] dt, 
\end{equation}
over $\boldsymbol{\alpha} \in \mb{A}^N$, subject to $(\bZ_t)_{t \geq 0} = (\bZ^{\boldsymbol{\alpha}}_t)_{t \geq 0}$, $\bZ_0 = \bz_0$, of \eqref{2dopenNdynamics}. 
Suppose that in definitions \eqref{2dstatecost}, \eqref{2dglobalcost} of $F^{N,i}(\bz)$, $F^N(\bz)$
we take $C=C_1 = C_2/2$.
Then the open loop model for the Coulomb game is a \emph{potential game} in the following sense:
For any profile $(\boldsymbol{\alpha}_t)_{t \geq 0} \in \mb{A}^{(N)}$ such that the limit in \eqref{2dopencostfunctional} exists, and 
for any deviation $(\eta_t)_{t \geq 0} \in \mb{A}^i(\boldsymbol{\alpha}^{-i})$, $1 \leq i \leq N$, such that 
the limit in \eqref{2dopencostfunctional} exists, we have
\begin{equation} \label{2dpotentialgamecondition}
	J^N((\eta,\boldsymbol{\alpha}^{-i}) | \bz_0 ) - J^N(\boldsymbol{\alpha}| \bz_0 ) = 
	J^{N,i}(\eta \  |\bz_0,\boldsymbol{\alpha}^{-i}) - J^{N,i}( \alpha^i \  |\bz_0,\boldsymbol{\alpha}^{-i}).
\end{equation}
\end{lem}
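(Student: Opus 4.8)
The plan is to imitate the proof of Lemma~\ref{potentialgame}, with the circumcircle identity of Lemma~\ref{circumcircle} playing the role that the elementary algebra~\eqref{basicalgebra} plays in the proof of Lemma~\ref{Nsubsolution}. First I would record the structural observation that makes the open loop model a potential game: because strategies here are \emph{processes} and not feedbacks, the dynamics~\eqref{2dopenNdynamics} decouple componentwise, $dZ^k_t = \alpha^k_t\,dt + \tfrac{\sigma}{\sqrt{N-1}}\,dB^k_t$, so a unilateral deviation of player $i$ from $\alpha^i$ to $\eta$ leaves the entire trajectory $(Z^k_t)_{t\ge0}$ of every opponent $k\neq i$ unchanged. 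Consequently the running control cost $\tfrac12\Vert\boldsymbol\alpha_t\Vert^2$ changes by exactly $\tfrac12\eta_t^2-\tfrac12\vert\alpha^i_t\vert^2$, which is precisely the change in the control part of $J^{N,i}$. Thus, after taking expectations, Cesàro averages, and limits (all of which exist by hypothesis), the potential game condition~\eqref{2dpotentialgamecondition} reduces to the purely algebraic claim that
$$
F^N(\bz)-F^{N,i}(\bz)\ \ \text{depends on}\ z^{-i}:=(z^k)_{k\neq i}\ \text{only},
$$
since then $F^N(\bZ_t)-F^{N,i}(\bZ_t)$ is unaffected by the deviation and the two state-cost increments agree.

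To prove this claim, I would set $v_{jk}:=\frac{z^j-z^k}{|z^j-z^k|^2}$, observe $v_{kj}=-v_{jk}$ and $|v_{jk}|^2=|z^j-z^k|^{-2}$, and expand
$$
F^N(\bz) = \frac{\Vert\bz\Vert^2}{8} + \frac{C}{(N-1)^2}\,\sum_{j=1}^N\Big|\sum_{k:k\neq j}v_{jk}\Big|^2
= \frac{\Vert\bz\Vert^2}{8} + \frac{C}{(N-1)^2}\bigg(\sum_{j}\sum_{k\neq j}\frac{1}{|z^j-z^k|^2} + \sum_j\sum_{k\neq j}\sum_{\ell\neq j,k}\langle v_{jk},v_{j\ell}\rangle\bigg).
$$
Then I would isolate the $z^i$-dependent part of each sum. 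In the diagonal sum, the terms containing $z^i$ are those with $j=i$ or $k=i$, which by the $j\leftrightarrow k$ symmetry amount to $2\sum_{k\neq i}|z^i-z^k|^{-2}$, i.e. (after the normalisation, once $2C=C_2$) exactly $C_2(h_2*\mu^{N,i}_\bz)(z^i)/(N-1)$. In the triple sum the $z^i$-dependent terms are those with $i\in\{j,k,\ell\}$; using the $k\leftrightarrow\ell$ symmetry of the summand and the antisymmetry of $v$, the $j=i$, $k=i$, and $\ell=i$ contributions collapse to $B-2A$, where $B:=\sum_{k\neq i}\sum_{\ell\neq i,k}\langle v_{ik},v_{i\ell}\rangle$ and $A:=\sum_{k\neq i}\sum_{\ell\neq i,k}\langle v_{ik},v_{k\ell}\rangle$. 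Applying the pointwise identity~\eqref{circumcircleidentity} with $z=z^i,\ w=z^k,\ u=z^\ell$, summing, and performing one further relabelling yields $B-2A=\sum_{k\neq i}\sum_{\ell\neq i,k}\tfrac{2}{D^2(z^i-z^k,z^i-z^\ell)}$, so with $C=C_1$ this contributes precisely the collinearity term $C_1\iint_{w\neq u}\tfrac{2}{D^2(z^i-w,z^i-u)}\mu^{N,i}_\bz(dw)\mu^{N,i}_\bz(du)$ of $F^{N,i}$.

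Collecting the two computations, under the standing choice $C=C_1=C_2/2$ the $z^i$-dependent part of $F^N(\bz)$ equals $\tfrac{|z^i|^2}{8}+C_1\iint_{w\neq u}\tfrac{2}{D^2(z^i-w,z^i-u)}\mu^{N,i}_\bz(dw)\mu^{N,i}_\bz(du)+C_2\tfrac{(h_2*\mu^{N,i}_\bz)(z^i)}{N-1}$, which is exactly the $z^i$-dependent part of $F^{N,i}(\bz)$. Hence $F^N-F^{N,i}$ is a function of $z^{-i}$ alone, and feeding this into the reduction of the first paragraph gives~\eqref{2dpotentialgamecondition}, meeting Definition~2.23 of~\cite{bible1}. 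I expect the only genuinely delicate point to be the bookkeeping in the second paragraph: carefully enumerating the cases $j=i$, $k=i$, $\ell=i$ in the triple sum and using $v_{kj}=-v_{jk}$ together with the $k\leftrightarrow\ell$ symmetry to funnel everything into the combination $B-2A$ that Lemma~\ref{circumcircle} resolves — this being the two-dimensional analogue of the cancellation~\eqref{basicalgebra} used for Lemma~\ref{Nsubsolution}.
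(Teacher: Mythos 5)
Your proof is correct and follows essentially the same route as the paper's: both rest on the circumcircle identity of Lemma \ref{circumcircle} to convert the cross terms of $|(h_1 * \mu^{N,i}_\bz)(z^i)|^2$ into the collinearity cost and the diagonal terms into the $h_2$ cost, so that $F^N - F^{N,i}$ depends only on the opponents' positions, whence the potential-game condition. The only difference is bookkeeping: the paper symmetrizes the full triple sum cyclically (producing the factor $1/3$ that corrects the triple counting), whereas you isolate the $z^i$-dependent terms directly via the combination $B-2A$ — a cosmetic reorganization of the same computation.
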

\begin{proof}
The proof is the same as for Lemma \ref{potentialgame} except 
we must also make use of Lemma \ref{circumcircle}. We thus rewrite the interaction term of the global state cost as
$$
\begin{aligned}
 \sum_{i=1}^N\sum_{j:j\not=i}\sum_{k:k\not=i} \left\langle\frac{z^i - z^j}{|z^i-z^j|^2},\frac{z^i - z^k}{|z^i-z^k|^2}\right\rangle	  =&\ \sum_{i=1}^N\sum_{j:j\not=i}\sum_{k:k\not=i,j} \left\langle\frac{z^i - z^j}{|z^i-z^j|^2},\frac{z^i - z^k}{|z^i-z^k|^2}\right\rangle+\sum_{i=1}^N\sum_{j:j\not=i}\frac{1}{|z^i-z^j|^2}\\
	=&\ \frac{1}{3}\sum_{i=1}^N\sum_{j:j\not=i}\sum_{k:k\not=i,j}\frac{2}{D^2(z^i-z^j,z^i-z^k)}+\sum_{i=1}^N\sum_{j:j\not=i}\frac{1}{|z^i-z^j|^2},
\end{aligned}
$$
where for the last equality we have used Lemma 9.1 along with cyclically permuting the $i, j, k$ indices.  We then see that the factor of $\frac{1}{3}$ corrects the triple counting of the reciprocal squared diameters of $F^{N,i}(\bz)$, so that condition \eqref{2dpotentialgamecondition} holds provided
$C=C_1 = C_2/2$. As before in Section \ref{dysongame}, this exactly meets Definition 2.23 in \cite{bible1} for potential game.
\end{proof}
\begin{rem}
The proof shows that instead of \eqref{2dglobalcost}, we might as well 
be working with 
\begin{equation} \label{2dglobalcostbetter}
	F^N(\bz) := \frac{\Vert \bz \Vert^2}{8} + 
	\frac{C_1}{3}\sum_{i=1}^N \int \int_{ \substack{w,u \in \RR^2 \\ w \neq u}} \frac{2}{D^2(z^i-w,z^i-u)} \mu^{N,i}_\bz(dw) \mu^{N,i}_\bz(du)+ \frac{C_2}{2}\sum_{i=1}^N \frac{(h_2 * \mu^{N,i}_\bz)(z^i)}{N-1}.
\end{equation}
This more general form is in fact required to solve the HJB \eqref{globalHJB} in general dimension $d \geq 1$.
\end{rem}
As in Section \ref{dysongame}, a minimizer of \eqref{2dopencostfunctional} can be achieved
by a strategy in closed loop feedback form based on a solution to the ergodic HJB equation
\begin{equation} \label{globalHJB}
- \frac{\sigma^2}{2(N-1)} \cdot \frac{1}{N} \Delta_\bz W(\bz) + \frac{1}{2N} \Vert \nabla_\bz W(\bz)\Vert^2 =
\frac{1}{N} F^N(\bz) - \lambda, \ \ \bz \in \mc{D}^N.
\end{equation}

\subsection*{Solving the $N$ Nash system and ergodic HJB equation}

Compare the next result with Lemma \ref{Nsubsolution} for the one dimensional case.
\begin{lem}
	\label{2dNnashprop}
Let $\beta \in \RR$, $C_1=\beta^2/8$, and $C_2 = 3\beta^2/8$. 
Then the ergodic value pairs $(v^{N,i}_\beta(\bz), \lambda^{N,i}_\beta)_{i=1}^N$ of \eqref{2dNnashsoln}
form a classical solution to the $N$--Nash system \eqref{2dNnash} on $\mc{D}^N$.
\end{lem}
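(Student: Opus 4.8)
The proof will be a direct differentiation argument, structured exactly as the proof of Lemma~\ref{Nsubsolution} in one dimension, with the elementary partial--fraction identity \eqref{basicalgebra} replaced by the circumcircle identity of Lemma~\ref{circumcircle}. First I would record the relevant derivatives of $v^{N,i}_\beta(\bz) = \frac{|z^i|^2}{4} - \frac{\beta}{2(N-1)}\sum_{k:k\neq i}\log|z^i-z^k|$: since $\nabla_z \log|z| = z/|z|^2 = h_1(z)$ on $\RR^2 \setminus \{0\}$, one has $\nabla_{z^i}v^{N,i}_\beta(\bz) = \frac{z^i}{2} - \frac{\beta}{2}(h_1*\mu^{N,i}_\bz)(z^i)$ and $\nabla_{z^k}v^{N,i}_\beta(\bz) = \frac{\beta}{2(N-1)}\frac{z^i-z^k}{|z^i-z^k|^2}$ for $k\neq i$. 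The key structural simplification relative to the one--dimensional case is that $\log|z|$ is \emph{harmonic} off the origin in $\RR^2$, so $\Delta_{z^j}\log|z^i-z^k| = 0$ for every $j$ and only the quadratic term survives the Laplacian: $\Delta_\bz v^{N,i}_\beta(\bz) = 1$, whence the diffusion term on the left of \eqref{2dNnash} is simply the constant $-\frac{\sigma^2}{2(N-1)}$ (this is why $\lambda^{N,i}_\beta$ carries $\frac{\sigma^2}{2(N-1)}$ here rather than $\frac{\sigma^2}{4(N-1)}$ as in \eqref{ehjbsolution}).

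Next I would expand the two genuinely interacting terms on the left of \eqref{2dNnash}. Squaring $\nabla_{z^i}v^{N,i}_\beta$ produces $\frac{|z^i|^2}{8}$, a linear term $-\frac{\beta}{4}\langle z^i,(h_1*\mu^{N,i}_\bz)(z^i)\rangle$, a ``diagonal'' singular term $\frac{\beta^2}{8}\frac{(h_2*\mu^{N,i}_\bz)(z^i)}{N-1}$, and an off--diagonal double sum $\frac{\beta^2}{8(N-1)^2}\sum_{k\neq i}\sum_{\ell\neq i,k}\langle\frac{z^i-z^k}{|z^i-z^k|^2},\frac{z^i-z^\ell}{|z^i-z^\ell|^2}\rangle =: \frac{\beta^2}{8(N-1)^2}A$. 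Pairing $\nabla_{z^k}v^{N,k}_\beta$ with $\nabla_{z^k}v^{N,i}_\beta$ and splitting the inner index according to whether it equals $i$ yields a linear term $\frac{\beta}{4(N-1)}\sum_{k\neq i}\langle z^k,\frac{z^i-z^k}{|z^i-z^k|^2}\rangle$, another multiple $\frac{\beta^2}{4}\frac{(h_2*\mu^{N,i}_\bz)(z^i)}{N-1}$ of the singular term, and an off--diagonal double sum $-\frac{\beta^2}{4(N-1)^2}\sum_{k\neq i}\sum_{\ell\neq i,k}\langle\frac{z^k-z^\ell}{|z^k-z^\ell|^2},\frac{z^i-z^k}{|z^i-z^k|^2}\rangle =: -\frac{\beta^2}{4(N-1)^2}B$.

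It then remains to collect everything. The two linear pieces combine, via $\langle z^i-z^k,\frac{z^i-z^k}{|z^i-z^k|^2}\rangle=1$, into the constant $-\frac{\beta}{4}$, which together with the diffusion constant gives exactly $-\lambda^{N,i}_\beta$; the diagonal singular terms sum to $\frac{3\beta^2}{8}\frac{(h_2*\mu^{N,i}_\bz)(z^i)}{N-1}$, matching $C_2=3\beta^2/8$; and the remaining off--diagonal contribution is $\frac{\beta^2}{8(N-1)^2}(A-2B)$, since $B$ — a sum of \emph{symmetric} inner products — is unchanged under $k\leftrightarrow\ell$. Applying Lemma~\ref{circumcircle} with $(z,w,u)=(z^i,z^k,z^\ell)$ and summing over $k\neq i$, $\ell\neq i,k$, identity \eqref{circumcircleidentity} yields $\sum_{k\neq i}\sum_{\ell\neq i,k}\frac{2}{D^2(z^i-z^k,z^i-z^\ell)} = A-2B$ (the two ``mixed'' inner products in \eqref{circumcircleidentity} each equal $B$ after relabeling), so this contribution equals $\frac{\beta^2}{8}\int\int_{w\neq u}\frac{2}{D^2(z^i-w,z^i-u)}\mu^{N,i}_\bz(dw)\mu^{N,i}_\bz(du)$, matching $C_1=\beta^2/8$. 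Assembling, the left side of \eqref{2dNnash} equals $F^{N,i}(\bz)-\lambda^{N,i}_\beta$, and all quantities are smooth on $\mc{D}^N$ since there the denominators stay bounded away from $0$. The only real obstacle is the careful index bookkeeping in the triple sums and, in particular, recognizing that the symmetrization built into Lemma~\ref{circumcircle} is precisely what makes the coefficients $(C_1,C_2)=(\beta^2/8,\,3\beta^2/8)$ fall out with no residual cross terms — entirely analogous to the role of \eqref{basicalgebra} in the one--dimensional Lemma~\ref{Nsubsolution}.
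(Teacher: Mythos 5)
Your proposal is correct and follows essentially the same route as the paper's proof: compute the gradients, note that harmonicity of $\log|z|$ off the origin reduces the Laplacian to the constant $1$, expand the squared and cross terms, combine the linear pieces into $-\beta/4$ and the diagonal singular pieces into $\frac{3\beta^2}{8}\frac{(h_2*\mu^{N,i}_\bz)(z^i)}{N-1}$, and identify the off--diagonal double sums via Lemma \ref{circumcircle} (with the $k\leftrightarrow\ell$ relabeling) as the circumcircle cost with coefficient $\beta^2/8$. All coefficients and the bookkeeping of the triple sums check out.
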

\begin{proof}
Just as in the one dimensional case, we can compute directly
$$
\nabla_{z^i} v^{N,i}(\bz)
= \frac{z^i}{2} - \frac{\beta}{2} (h_1 * \mu^{N,i}_\bz)(z^i), \ \ \ 
\nabla_{z^k} v^{N,i}(\bz) = \frac{\beta}{2(N-1)} \frac{z^i-z^k}{|z^i-z^k|^2}, \ \ k \neq i.
$$
However, quite differently from the one dimensional case, 
the function $h_0(z) = \log | z |$ satisfies $\Delta_z h_0(z) = 2 \pi \delta(z)$.
Hence, the Laplacian has the rather easy form
$$
- \frac{\sigma^2}{2(N-1)} \Delta_\bz v^{N,i}(\bz) = - \frac{\sigma^2}{2(N-1)} ,
$$
as long as $z^i \neq z^j$ for all $i \neq j$, i.e., $\bz \in \mc{D}^N$.
Write $\tilde{z}^{ik}:= \frac{z^i - z^k}{|z^i - z^k|^2}$. Then we have
$$
\frac{1}{2} | \nabla_{z^i} v^{N,i}(\bz) |^2 = \frac{|z^i|^2}{8} 
+ \frac{\beta^2}{8} \frac{(h_2 * \mu^{N,i}_\bz)(z^i)}{N-1}
+ \frac{\beta^2}{8(N-1)^2} \cdot \sum_{k \neq i} \sum_{\ell \neq i,k} \langle \tilde{z}^{ik}, \tilde{z}^{i\ell} \rangle 
- \frac{\beta}{4(N-1)} \sum_{k:k \neq i} \langle z^i, \tilde{z}^{ik} \rangle,
$$
and the interaction term becomes
$$
\sum_{k: k \neq i} \langle \nabla_{z^k} v^{N,k}(\bz) , \nabla_{z^k} v^{N,i} (\bz) \rangle 
= \frac{\beta^2}{4} \frac{(h_2 * \mu^{N,i}_\bz)(z^i)}{N-1} 
 - \frac{\beta^2}{4(N-1)^2} \cdot \sum_{k \neq i} \sum_{\ell \neq i,k} \langle \tilde{z}^{ik}, \tilde{z}^{k\ell} \rangle + \frac{\beta}{4(N-1)} \sum_{k:k \neq i} \langle z^k, \tilde{z}^{ik} \rangle.
$$
The final terms of the previous two equations come together to give the constant $-\beta/4$.
Combining all remaining terms along with Lemma \ref{circumcircle} 
then completes the proof.
\end{proof}

Compare the next result with Lemma 
\ref{openloopehjbtheorem} for the one dimensional
case.
\begin{lem} \label{2dehjb}
Fix $\beta \in \RR$ and let $C=C_1 = \beta^2/8$ and $C_2 = \beta^2/4$ in the global cost \eqref{2dglobalcost} or \eqref{2dglobalcostbetter}. Then the pair
$$
\begin{cases}
	W_\beta(\bz) := \frac{\Vert \bz \Vert^2}{4} - \frac{\beta}{2(N-1)} \sum_{1 \leq k < \ell \leq N} \log |z^\ell- z^k|, & \bz \in \calD^N \\
	\lambda^N_\beta :=\frac{\beta}{8} + \frac{\sigma^2}{2(N-1)}
\end{cases}
$$
forms a classical solution to the ergodic HJB \eqref{globalHJB} on $\bz \in \calD^N$.
\end{lem}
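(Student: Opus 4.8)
The plan is to follow the same direct-computation strategy as in the proof of Lemma~\ref{openloopehjbtheorem}, using that $\nabla_{z^i}W_\beta(\bz) = \nabla_{z^i}v^{N,i}_\beta(\bz) = \frac{z^i}{2} - \frac{\beta}{2}(h_1*\mu^{N,i}_\bz)(z^i)$, which was already recorded in the proof of Lemma~\ref{2dNnashprop}. The one genuinely new feature — and the reason the two-dimensional case is in a sense \emph{simpler} — is that $h_0(z)=\log|z|$ is harmonic on $\RR^2\setminus\{0\}$ (indeed $\Delta_z h_0(z)=2\pi\delta(z)$, as used in Lemma~\ref{2dNnashprop}), so on the domain $\mc{D}^N$ the logarithmic part of $W_\beta$ contributes nothing to $\Delta_\bz W_\beta$. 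Since $\Delta_{z^i}\frac{|z^i|^2}{4}=1$ in $\RR^2$, this gives $\Delta_\bz W_\beta(\bz)=N$ on $\mc{D}^N$, hence $-\frac{\sigma^2}{2(N-1)}\cdot\frac{1}{N}\Delta_\bz W_\beta(\bz) = -\frac{\sigma^2}{2(N-1)}$. (Contrast with Lemma~\ref{openloopehjbtheorem}, where identity \eqref{keyidentity2} was needed to handle the second-derivative terms; here no such identity is required for the diffusion term.)

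It then remains to expand the gradient term. Writing $\|\nabla_\bz W_\beta(\bz)\|^2 = \sum_{i=1}^N\bigl|\tfrac{z^i}{2}-\tfrac{\beta}{2}(h_1*\mu^{N,i}_\bz)(z^i)\bigr|^2$ and expanding the square produces three pieces: $\sum_i\frac{|z^i|^2}{4}$, the cross term $-\frac{\beta}{2}\sum_i\langle z^i,(h_1*\mu^{N,i}_\bz)(z^i)\rangle$, and $\frac{\beta^2}{4}\sum_i|(h_1*\mu^{N,i}_\bz)(z^i)|^2$. For the cross term I would use the $\RR^2$ analog of identity \eqref{keyidentity1}, namely $\sum_{i=1}^N\langle z^i,(h_1*\mu^{N,i}_\bz)(z^i)\rangle = \frac{N}{2}$, which follows by symmetrizing over ordered pairs $(i,k)$: the $(i,k)$ and $(k,i)$ contributions sum to $\frac{\langle z^i,z^i-z^k\rangle+\langle z^k,z^k-z^i\rangle}{|z^i-z^k|^2}=1$, and there are $\binom{N}{2}$ unordered pairs, so the double sum equals $\frac{N(N-1)}{2}$ and dividing by $N-1$ gives $\frac{N}{2}$. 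Hence, using $\|\bz\|^2=\sum_i|z^i|^2$ and the form \eqref{2dglobalcost} of $F^N$ with $C=C_1=\beta^2/8$,
\[
\frac{1}{2N}\|\nabla_\bz W_\beta(\bz)\|^2 = \frac{1}{N}\Bigl(\frac{\|\bz\|^2}{8} + \frac{\beta^2}{8}\sum_{i=1}^N|(h_1*\mu^{N,i}_\bz)(z^i)|^2\Bigr) - \frac{\beta}{8} = \frac{1}{N}F^N(\bz) - \frac{\beta}{8}.
\]

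Combining the two computations yields
\[
-\frac{\sigma^2}{2(N-1)}\cdot\frac{1}{N}\Delta_\bz W_\beta(\bz) + \frac{1}{2N}\|\nabla_\bz W_\beta(\bz)\|^2 = \frac{1}{N}F^N(\bz) - \Bigl(\frac{\beta}{8} + \frac{\sigma^2}{2(N-1)}\Bigr),
\]
which is exactly \eqref{globalHJB} with $\lambda=\lambda^N_\beta$. To cover the alternative form \eqref{2dglobalcostbetter} (with $C_1=\beta^2/8$, $C_2=\beta^2/4$), I would invoke Lemma~\ref{circumcircle} together with the cyclic-permutation bookkeeping already carried out in the proof of Lemma~\ref{2dpotentialgame}, which rewrites $\sum_i|(h_1*\mu^{N,i}_\bz)(z^i)|^2$ as $\frac{1}{3}\sum_i\iint_{w\neq u}\frac{2}{D^2(z^i-w,z^i-u)}\mu^{N,i}_\bz(dw)\mu^{N,i}_\bz(du) + \sum_i\frac{(h_2*\mu^{N,i}_\bz)(z^i)}{N-1}$, so that $\frac{\beta^2}{8}\sum_i|(h_1*\mu^{N,i}_\bz)(z^i)|^2$ coincides with the interaction part of \eqref{2dglobalcostbetter}. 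There is essentially no obstacle here: the only points requiring any care are the harmonicity of $\log|\cdot|$ off the diagonal (which is precisely why one restricts to $\mc{D}^N$) and checking that the constant emerges as $\lambda^N_\beta=\frac{\beta}{8}+\frac{\sigma^2}{2(N-1)}$ rather than the closed-loop constant $\frac{\beta}{4}+\frac{\sigma^2}{2(N-1)}$ of \eqref{2dNnashsoln}.
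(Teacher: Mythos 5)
Your proposal is correct and follows essentially the same direct computation as the paper's (very terse) proof: harmonicity of $\log|\cdot|$ off the diagonal gives $\Delta_\bz W_\beta = N$, and expanding $\frac{1}{2}\Vert \nabla_\bz W_\beta \Vert^2$ with the two dimensional analog of \eqref{keyidentity1} produces the constant $-\frac{\beta}{8}\cdot N$ and the $F^N$ terms. The only difference is that you spell out the symmetrization argument for the cross term and the reduction of \eqref{2dglobalcostbetter} to \eqref{2dglobalcost} via Lemma \ref{circumcircle}, both of which the paper leaves implicit.
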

\begin{proof}
Similarly as above, we can compute directly
$$
\nabla_{z^i} W_\beta(\bz) 
= \frac{z^i}{2} - \frac{\beta}{2(N-1)} \sum_{k:k \neq i} \frac{z^i-z^k}{|z^i-z^k|^2},
 \ \ \ \Delta_\bz W_\beta(\bz) = N,
$$
as well as 
$$
\frac{1}{2} \Vert \nabla_\bz W_\beta(\bz)\Vert^2 = \sum_{i=1}^N \frac{|z^i|^2}{8} + \frac{\beta^2}{8} \sum_{i=1}^N \left | \frac{1}{N-1} \sum_{k:k\neq i} \frac{z^i - z^k}{|z^i-z^k|^2}  \right |^2
 - \frac{\beta}{8} \cdot N.
$$
Putting everything together completes the proof.
\end{proof}
\begin{rem}
Considering Lemmas \ref{2dpotentialgame},
\ref{2dNnashprop}, and \ref{2dehjb},
we notice the choice of $C_1$ is the same for either model, but the choice $C_2 = \beta^2/4$ in the open loop case
disagrees with the larger choice $C_2 = 3\beta^2/8$ in the closed loop case. 
In particular, unlike in one dimension, \emph{the closed and open loop models are not simultaneously explicitly solvable in higher dimensions.}
But we saw that
players in the closed loop game on the line will use a lower repulsion in equilibrium, 
and there is a similar interpretation in the plane:
players of the closed loop game will adopt the repulsion $\beta$
despite facing the higher singular cost coefficient $C_2 = 3\beta^2/8$.

\end{rem}

\subsection*{Mean field equations}
In contrast to the one dimensional case, 
the two dimensional state costs $F^N(\bz)$ and $F^{N,i}(\bz)$
can safely be replaced with their naive mean field analogs upon dropping
the reciprocal squared gaps cost term.

\begin{lem} \label{2dmasterequationcalc}
Fix $\beta>0$. Define
\begin{equation} \label{2dmeanfieldpotential}
\mc{U}_\beta(\mu) := \int_{\R^2} \frac{|z|^2}{4} \mu(dz) - \frac{\beta}{4} \int_{\R^2} \int_{\R^2} \log |z - w| \mu(dz) \mu(dw), \ \ 
\mu \in \calP_2(\RR^2).
\end{equation}
Then for $\mu \in \mathcal{P}_2^p(\R^2)$, $p>2$, the pair $(\mc{U}_\beta(\mu), \frac{\beta}{8})$ satisfies the ergodic Hamilton--Jacobi equation 
\begin{equation} \label{2dopenloopmasterequation}
\begin{aligned}
	\frac{1}{2} \int_\R |\partial_\mu \mc{U}(\mu)(z)|^2 \mu(dz) 
		& = \int_{\R^2} \left ( \frac{|z|^2}{8} +  \frac{\beta^2}{8}  \left | \int_{\R^2} \frac{z-w}{|z-w|^2} \mu(dw) \right |^2  \right ) \mu(dz) - \lambda \\
		& = \int_{\R^2} \frac{|z|^2}{8} \mu(dz) +  \frac{\beta^2}{24} \int_{\R^2}  \left ( \int \int_{ \substack{w,u \in \RR^2 \\ w \neq u}}\frac{2}{D^2(z-w,z-u)} \mu(dw) \mu(du) \right ) \mu(dz) - \lambda,
\end{aligned}
\end{equation}
where we recall $D(\xi,\eta)$ is the diameter of the circumcircle of the triangle determined 
by $\xi,\eta,$ and $(0,0)$ in $\RR^2$. 
Similarly, recall the definition \eqref{2dmasterfield}
of $U_\beta(z,\mu)$.
Then for $\mu \in \mathcal{P}_2^p(\R^2)$, $p>2$, the pair $(U_\beta(z,\mu), \frac{\beta}{4})$ forms a solution to the Coulomb master equation \eqref{2dmasterequation}.
\end{lem}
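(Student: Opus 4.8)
The plan is to reduce every term to the vector transform $\mathcal H\mu$ of \eqref{coulombtransform} and the bilinear transform $\mathcal H[\mu\mathcal H\mu]$ of \eqref{coulombtransformproduct}, exactly as the one--dimensional Lemmas \ref{lemmaformalcalcs} and \ref{masterequation} reduce to $H\mu$ and $H[mHm]$; the only genuinely new input is the planar counterpart of the Hilbert transform product rule \eqref{productformula}, which is supplied by Lemma \ref{circumcircle}. First I would record the derivatives. Since $U_\beta(z,\mu)$ of \eqref{2dmasterfield} is affine in $\mu$ apart from the $\mu$--free term $|z|^2/4$, its linear functional derivative is $\frac{\delta U_\beta}{\delta\mu}(z,\mu)(w)=-\frac\beta2\log|z-w|$, and since the symmetric double integral in \eqref{2dmeanfieldpotential} carries the compensating factor $\frac12$ one gets $\frac{\delta\mc{U}_\beta}{\delta\mu}(\mu)(z)=U_\beta(z,\mu)$. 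Hence, by \eqref{wassersteingradient},
\[
\nabla_z U_\beta(z,\mu)=\partial_\mu\mc{U}_\beta(\mu)(z)=\frac z2-\frac\beta2\,\mathcal H\mu(z),\qquad \partial_\mu U_\beta(z,\mu)(w)=\frac\beta2\,\frac{z-w}{|z-w|^2}.
\]
No principal values intervene, and all integrals below converge absolutely because $w\mapsto|z-w|^{-1}$ is locally integrable in $\R^2$ and, by the bound established inside the proof of Lemma \ref{circumcircle}, $\mathcal H\mu$ is bounded when $\mu\in\mc{P}^p_2(\R^2)$ with $p>2$; differentiation under the integral sign is justified the same way.

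For the Hamilton--Jacobi claim I would expand
\[
\frac12\int_{\R^2}\Big|\frac z2-\frac\beta2\,\mathcal H\mu(z)\Big|^2\mu(dz)=\int_{\R^2}\frac{|z|^2}{8}\,\mu(dz)-\frac\beta4\int_{\R^2}\langle z,\mathcal H\mu(z)\rangle\,\mu(dz)+\frac{\beta^2}{8}\int_{\R^2}|\mathcal H\mu(z)|^2\,\mu(dz),
\]
and use the elementary identity $\langle z-w,\tfrac{z-w}{|z-w|^2}\rangle=1$ together with the antisymmetry $\tfrac{w-z}{|w-z|^2}=-\tfrac{z-w}{|z-w|^2}$ and Fubini to get $\int_{\R^2}\langle z,\mathcal H\mu(z)\rangle\mu(dz)=\tfrac12$; this already yields the first line of \eqref{2dopenloopmasterequation} with $\lambda=\tfrac\beta8$. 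To pass to the second line I would prove the two scalar identities
\[
\int_{\R^2}\mathcal H[\mu\mathcal H\mu](z)\,\mu(dz)=-\int_{\R^2}|\mathcal H\mu(z)|^2\,\mu(dz),\qquad 3\int_{\R^2}|\mathcal H\mu(z)|^2\,\mu(dz)=\int_{\R^2}\int\!\!\int_{w\neq u}\frac{2\,\mu(dw)\,\mu(du)}{D^2(z-w,z-u)}\,\mu(dz).
\]
The first is immediate from the definition \eqref{coulombtransformproduct} upon integrating in $z$ first, since $\int_{\R^2}\tfrac{z-w}{|z-w|^2}\mu(dz)=-\mathcal H\mu(w)$; the second then follows by integrating the pointwise identity of Lemma \ref{circumcircle}, namely $|\mathcal H\mu(z)|^2-2\mathcal H[\mu\mathcal H\mu](z)=\int\!\!\int_{w\neq u}\tfrac{2}{D^2(z-w,z-u)}\mu(dw)\mu(du)$, against $\mu(dz)$ and inserting the first identity. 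Substituting $\tfrac{\beta^2}{8}\int|\mathcal H\mu|^2\mu=\tfrac{\beta^2}{24}\int(\int\!\!\int 2/D^2)\,\mu$ gives the second form of the right--hand side.

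For the master equation \eqref{2dmasterequation} I would substitute the derivatives above into its left--hand side,
\[
\int_{\R^2}\Big\langle\frac\beta2\,\frac{z-w}{|z-w|^2},\ \frac w2-\frac\beta2\,\mathcal H\mu(w)\Big\rangle\mu(dw)+\frac12\Big|\frac z2-\frac\beta2\,\mathcal H\mu(z)\Big|^2,
\]
which, after using \eqref{coulombtransformproduct}, equals $\tfrac\beta4\int\langle\tfrac{z-w}{|z-w|^2},w\rangle\mu(dw)-\tfrac{\beta^2}{4}\mathcal H[\mu\mathcal H\mu](z)+\tfrac{|z|^2}{8}-\tfrac\beta4\langle z,\mathcal H\mu(z)\rangle+\tfrac{\beta^2}{8}|\mathcal H\mu(z)|^2$. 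Here the first and fourth terms combine, via $\int\langle\tfrac{z-w}{|z-w|^2},w-z\rangle\mu(dw)=-1$, into the constant $-\tfrac\beta4$, while the surviving $\tfrac{\beta^2}{8}\big(|\mathcal H\mu(z)|^2-2\mathcal H[\mu\mathcal H\mu](z)\big)$ is exactly $\tfrac{\beta^2}{8}\int\!\!\int_{w\neq u}\tfrac{2}{D^2(z-w,z-u)}\mu(dw)\mu(du)$ by Lemma \ref{circumcircle}; this is \eqref{2dmasterequation} with $\lambda=\tfrac\beta4$.

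The one step that takes care is the second scalar identity above — the factor--of--three bookkeeping relating $\int|\mathcal H\mu|^2\mu$ to the triple circumcircle integral — which is the planar analogue of the computation behind \eqref{productformula} and of the triple--counting correction appearing in Lemma \ref{2dpotentialgame}; unlike the one--dimensional Lemma \ref{masterequation} no delicate $L^p$--boundedness theory for singular integrals is needed here, so the remaining work is routine once the boundedness of $\mathcal H\mu$ for $p>2$ is in hand (the natural failure point would be weaker integrability of the density, where $\mathcal H\mu$ need not be bounded, in parallel with the role of $\beta>\sigma^2$ in Proposition \ref{conjecture}).
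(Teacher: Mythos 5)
Your proposal is correct and follows essentially the same route as the paper: both reduce everything to $\mathcal{H}\mu$ and $\mathcal{H}[\mu\mathcal{H}\mu]$, use Lemma \ref{circumcircle} as the planar product rule, and obtain the factor $1/3$ by symmetrization (the paper cyclically permutes the triple integral, which is exactly your identity $\int\mathcal{H}[\mu\mathcal{H}\mu]\,d\mu=-\int|\mathcal{H}\mu|^2\,d\mu$ in disguise). Your explicit verification of the master equation, which the paper dismisses as ``similar,'' matches the intended computation, including the cancellation to the constant $-\beta/4$.
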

\begin{proof}
The first equality in \eqref{2dopenloopmasterequation} is straightforward;
for the second equality, we have
$$
\begin{aligned}
\int_{\RR^2} |\mc{H}\mu(z)|^2 \mu(dz) & = 
	\int_{\R^2}\int_{\R^2} \int_{\R^2} \left \langle \frac{z-w}{|z-w|^2}, \frac{z-u}{|z-u|^2} \right \rangle \mu(dw) \mu(du) \mu(dz) \\
	& = \frac{1}{3} \int_{\R^2}\int_{\R^2} \int_{\R^2}  \left [ \left \langle \frac{z-w}{|z-w|^2}, \frac{z-u}{|z-u|^2} \right \rangle  - 2 \left \langle \frac{z-w}{|z-w|^2}, \frac{w-u}{|w-u|^2} \right \rangle \right ] \mu(dw) \mu(du) \mu(dz) \\
		& = \frac{1}{3} \int_{\R^2} \left [ | \mc{H}\mu(z)|^2 - 2 \mc{H}[\mu \mc{H}\mu](z)  \right ]\mu(dz). \\
\end{aligned}
$$
Hence, we arrive at
$$
\frac{1}{2} \int_\R |\partial_\mu \mc{U}(\mu)(z)|^2 \mu(dz)  = 
\int_{\R^2} \frac{|z|^2}{8} \mu(dz) +  \frac{\beta^2}{24} \int_{\R^2}  \left [ | \mc{H}\mu(z)|^2 - 2 \mc{H}[\mu \mc{H}\mu](z)  \right ] \mu(dz) - \lambda.
$$
But by Lemma \ref{circumcircle} we know that 
$$
\begin{aligned}
	| \mathcal{H}\mu(z) |^2 - 2 \mathcal{H}[\mu\mathcal{H}\mu](z)
= \int \int_{ \substack{w,u \in \RR^2 \\ w \neq u}}\frac{2}{D^2(z-w,z-u)} \mu(dw) \mu(du),
\end{aligned}
$$
which completes the proof of \eqref{2dopenloopmasterequation}. The proof of 
\eqref{2dmasterequation} is similar.
\end{proof}

\subsection*{Recovering the master equation from the $N$ Nash system} \label{2dconvergenceofeqns}

\begin{proof}[Proof of Theorem \ref{big2dtheorem}]
As in the one dimensional case,
we know that $\max_{1 \leq i \leq N} |Z^i|$
is exponentially tight at speed $N$ by Theorem 1.12
of Chafa\"{i}-Hardy-Ma\"{i}da \cite{chafai2018concentration}. 
This again implies the $p$--Wasserstein convergence $d_p(\mu_\bZ^N,\mu_{\beta,V}) \overset{N \to \infty}{\to} 0$ almost surely.
Note that the Euler-Lagrange identity \eqref{2deulerlagrange} and $\Delta_z \log |z| = 2 \pi \delta_0(z)$ together imply that the density $m_{\beta,V}(z)$ 
is proportional to $\Delta_z V(z)$ on the support of $\mu_{\beta,V}$; 
hence, $m_{\beta,V}(z)$ is bounded with compact support. 
Again by the Euler-Lagrange identity \eqref{2deulerlagrange}, 
we have that
$$
\begin{aligned}
|\nabla V(\gamma)|^2 & = \beta^2 |\mc{H}m_{\beta,V}(\gamma)|^2 \\
\int_{\R^2} \left \langle \nabla V(\gamma) - \nabla V(w) ,  \frac{\gamma - w}{|\gamma - w|^2} \right \rangle m_{\beta,V}(w) dw  & = \beta \left [ |\mc{H}m_{\beta,V}(\gamma)|^2 - \mc{H}[m_{\beta,V} \mc{H}m_{\beta,V}](\gamma)   \right ].
\end{aligned}
$$
Then we calculate
$$
\sum_{k:k \neq i} \frac{z^i - z^k}{|z^i - z^k|^2} \prod_{1 \leq k < \ell \leq N} |z^\ell- z^k|^{\beta/\sigma^2}
 = \frac{\sigma^2}{\beta} \nabla_{z^i} \left [ \prod_{1 \leq k < \ell \leq N} |z^\ell- z^k|^{\beta/\sigma^2} \right ]
$$
$$
 \left ( \frac{\sigma^2}{\beta}  \right )^2 \Delta_{z^i} \left [ \prod_{1 \leq k < \ell \leq N} |z^\ell- z^k|^{\beta/\sigma^2} \right ]
 =
\left | \sum_{k:k \neq i} \frac{z^i - z^k}{|z^i - z^k|^2} \right |^2 \prod_{1 \leq k < \ell \leq N} |z^\ell- z^k|^{\beta/\sigma^2}.
$$
Using these, we can compute the limits
\begin{equation} \label{2dcomputethelimits}	
\begin{aligned}
\mathbb{E}  \langle \nabla V(Z^i) , (h_1 * \mu^{N,i}_\bZ )(Z^i) \rangle
 & = \frac{\mb{E} \left | \nabla V(Z^i) \right |^2}{\beta}  - \frac{1}{N-1} \frac{\sigma^2}{\beta}
 \mb{E} \Delta_{z^i} V(Z^i) \to  \frac{\left | \nabla V(\gamma) \right |^2}{\beta}
  = \beta |\mc{H}m_{\beta,V}(\gamma)|^2 \\
\lim_{N \to \infty} \mathbb{E} \left | (h_1 * \mu^{N,i}_\bZ)(Z^i) \right |^2 & = \lim_{N \to \infty} \frac{\mathbb{E} | \nabla V(Z^i) |^2}{\beta^2}
= \frac{ | \nabla V(\gamma) |^2}{\beta^2} = |\mc{H}m_{\beta,V}(\gamma)|^2,
\end{aligned}
\end{equation}
where the convergences again are assured by polynomial growth of $V$.
Now, we note that the collection of functions
$$
v^{N,i}_{\beta,V}(\bz) := \frac{V(z^i)}{2} - \frac{\beta}{2} (h_0 * \mu^{N,i}_\bz)(z^i), \ \ 1 \leq i \leq N,
$$
satisfies the system of equations
\begin{equation}
\label{2dgeneralNnash}
\begin{aligned}
& - \frac{\sigma^2}{2(N-1)} \Delta_\bz v^{N,i}+ \sum_{k: k \neq i} \langle \nabla_{z^k} v^{N,k} , \nabla_{z^k} v^{N,i} \rangle 
+ \frac{1}{2} | \nabla_{z^i} v^{N,i}(\bz) |^2  \\
& = \frac{| \nabla_{z^i} V(z^i) |^2}{8} + \frac{\beta^2}{4} \int \int_{\substack{w,u \in \RR^2 \\ w \neq u}} \frac{\mu^{N,i}_\bz(dw) \mu^{N,i}_\bz(du)}{D^2(z^i-w,z^i-u)} 
  + \frac{3\beta^2}{8} \frac{(h_2 * \mu^{N,i}_\bz)(z^i)}{N-1} \\
  & - \frac{\beta}{4} \int_{\R^2} \left \langle \nabla V(z^i) - \nabla V(w) ,  \frac{z^i - w}{|z^i - w|^2}  \right \rangle \mu^{N,i}_\bz(dw)  - \frac{\sigma^2}{4(N-1)} \Delta_{z^i} V(z^i). 
\end{aligned}
\end{equation}
If we further write
$$
\mathcal{L}_{\beta,V} f := \frac{\sigma^2}{2(N-1)} \Delta_\bz f - \sum_{k=1}^N \langle \nabla_{z^k} v^{N,k}_{\beta,V} , \nabla_{z^k} f \rangle,
$$
then just as in the one dimensional case we have the invariance identity $\mu_{\beta,V}^N \left [ \mathcal{L}_{\beta,V} (v^{N,i}_{\beta,V} )  \right ] = 0$.
Now, letting $N \to \infty$ in this invariance identity and using the computations \eqref{2dcomputethelimits}, \eqref{2dgeneralNnash}, we have
%
\begin{align} \label{2dmaincalculation}
& \frac{\beta^2}{4} \left [ |\mc{H}m_{\beta,V}(\gamma)|^2 - \mc{H}[m_{\beta,V} \mc{H}m_{\beta,V}](\gamma)   \right ] =  \frac{\beta}{4} \int_{\R^2} \left \langle \nabla V(\gamma) - \nabla V(w) ,  \frac{\gamma - w}{|\gamma - w|^2}  \right \rangle m_{\beta,V}(dw) \nonumber \\
 &= \lim_{N \to \infty} \mathbb{E} \left [ \frac{ | \nabla_{z^i} V(Z^i) |^2}{4} + \frac{\beta^2}{8} | (h_1 * \mu^{N,i}_\bZ )(Z^i) |^2 - \frac{\beta}{4}  \langle \nabla V(Z^i) , (h_1 * \mu^{N,i}_\ZZ )(Z^i) \rangle  \right ] \nonumber \\
 & + \lim_{N \to \infty} \mathbb{E} \left [ \frac{3\beta^2}{8} \frac{(h_2 * \mu^{N,i}_\bZ)(Z^i)}{N-1}  + \frac{\beta^2}{4} \int \int_{\substack{w,u \in \RR^2 \\ w \neq u}} \frac{\mu^{N,i}_\bZ(dw) \mu^{N,i}_\bZ(du)}{D^2(Z^i-w,Z^i-u)}   \right ] \nonumber \\
& = \frac{|\nabla V(\gamma)|^2}{4} + \frac{\beta^2}{8} \cdot \frac{|\nabla V(\gamma)|^2}{\beta^2}  - \frac{\beta}{4} \cdot   \frac{|\nabla V(\gamma)|^2}{\beta} \nonumber \\
 & + \lim_{N \to \infty} \mathbb{E} \left [ \frac{3\beta^2}{8} \frac{(h_2 * \mu^{N,i}_\bZ)(Z^i)}{N-1}  + \frac{\beta^2}{4} \int \int_{\substack{w,u \in \RR^2 \\ w \neq u}} \frac{\mu^{N,i}_\bZ(dw) \mu^{N,i}_\bZ(du)}{D^2(Z^i-w,Z^i-u)}    \right ] \nonumber\\
 & = \frac{\beta^2}{8} |\mc{H}m_{\beta,V}(\gamma)|^2 + \lim_{N \to \infty} \frac{\beta^2}{4} \mathbb{E} \left [ \frac{3}{2} \frac{(h_2 * \mu^{N,i}_\bZ)(Z^i)}{N-1}  +  \int \int_{\substack{w,u \in \RR^2 \\ w \neq u}} \frac{\mu^{N,i}_\bZ(dw) \mu^{N,i}_\bZ(du)}{D^2(Z^i-w,Z^i-u)}   \right ]. \\
 \nonumber
\end{align}

Up to this point, the proof parallels the one dimensional case closely, but now we need a
new ingredient to complete the proof in the two dimensional case.
Subtracting the first term ``$\frac{\beta^2}{8} |\mc{H}m_{\beta,V}(\gamma)|^2$'' of \eqref{2dmaincalculation} from both sides and recalling Lemma \ref{circumcircle}, 
the lefthandside of \eqref{2dmaincalculation} becomes
$$
\frac{\beta^2}{4} \int \int_{ \substack{w,u \in \RR^2 \\ w \neq u}}  \frac{\mu_{\beta,V}(dw) \mu_{\beta,V}(du)  }{D^2(\gamma-w,\gamma-u)}.
$$
But this implies the estimate
$$
\begin{aligned}
 \limsup_{N \to \infty} & \ \mathbb{E} \int \int_{\substack{w,u \in \RR^2 \\ w \neq u}} \frac{\mu^{N,i}_\bZ(dw) \mu^{N,i}_\bZ(du)}{D^2(Z^i-w,Z^i-u)} \\
 & \leq  
\limsup_{N \to \infty} \mathbb{E} \left [ \frac{3}{2} \frac{(h_2 * \mu^{N,i}_\bZ)(Z^i)}{N-1}
+   \int \int_{\substack{w,u \in \RR^2 \\ w \neq u}} \frac{\mu^{N,i}_\bZ(dw) \mu^{N,i}_\bZ(du)}{D^2(Z^i-w,Z^i-u)}  \right ] \\ & = \int \int_{ \substack{w,u \in \RR^2 \\ w \neq u}}  \frac{\mu_{\beta,V}(dw) \mu_{\beta,V}(du)  }{D^2(\gamma-w,\gamma-u)},
\end{aligned}
$$
By Lemma 5.1.7 of Ambrosio-Gigli-Savar\'{e} \cite{ambrosio1},
we have shown that the function $$f(z,w,u) := 1/D^2(z-w,z-u)$$ is uniformly integrable
with respect to the sequence of measures 
$$
\nu^N(dz, dw, du) := \mb{E} [\delta_{Z^i} \otimes \mu^{N,i}_\bZ \otimes \mu^{N,i}_\bZ ](dz, dw, du).
$$
Proposition 5.1.10 of the same reference \cite{ambrosio1} addresses how to circumvent the lone singularity of $1/D^2(\xi,\eta)$
at $(\xi,\eta) = ((0,0),(0,0))$, hence giving us the convergence
$$
\lim_{N \to \infty} \mathbb{E} \int \int_{ \substack{w,u \in \RR^2 \\ w \neq u}}\frac{\mu^{N,i}_\bZ(dw) \mu^{N,i}_{\bZ}(du)}{D^2(Z^i-w,Z^i-u)}  = 
\int \int_{ \substack{w,u \in \RR^2 \\ w \neq u}}\frac{\mu_{\beta,V}(dw) \mu_{\beta,V}(du)}{D^2(\gamma-w,\gamma-u)}.
$$
This in turn implies
$$
\lim_{N \to \infty} \frac{3}{2} \frac{\mathbb{E}(h_2 * \mu^{N,i}_\bZ)(Z^i)}{N-1} \\
= \int \int_{ \substack{w,u \in \RR^2 \\ w \neq u}}  \frac{\mu_{\beta,V}(dw) \mu_{\beta,V}(du)  }{D^2(\gamma-w,\gamma-u)}  - 
\lim_{N \to \infty} \mathbb{E} \int \int_{\substack{w,u \in \RR^2 \\ w \neq u}} \frac{\mu^{N,i}_\bZ(dw) \mu^{N,i}_\bZ(du)}{D^2(Z^i-w,Z^i-u)}  = 0,
$$
which completes the proof. 
\end{proof}

\begin{proof}[Proof of Corollary \ref{2depsilonnash}]
	For each $1 \leq i \leq N$, let $J^{N,i}_{C_1, C_2}(\psi | \bx, \boldsymbol{\phi}^{-i})$ denote the 
	cost functional associated with the state cost $F^{N,i}(\bz)$ of \eqref{2dstatecost}	with coefficients $C_1, C_2 >0$.
	Now one can follow a similar line of argument as for Theorem  \ref{verificationthm} to verify that $\boldsymbol{\phi}_\beta(\bz) := (-\nabla_{z^k}v^{N,k}_\beta(\bz) )_{k=1}^N$ is 
	a closed loop Nash equilibrium in the sense that for all $1 \leq i \leq N$
\begin{equation}
	\label{coulomboptimality}
	J^{N,i}_{\frac{\beta^2}{8},\frac{3\beta^2}{8}}(\phi_\beta^i | \bx, \boldsymbol{\phi}_\beta^{-i}) = \inf_{\psi  \in \mc{A}^i} J^{N,i}_{\frac{\beta^2}{8},\frac{3\beta^2}{8}}(\psi | \bx, \boldsymbol{\phi}_\beta^{-i}) = \frac{\beta}{4} + \frac{\sigma^2}{2(N-1)},
\end{equation}
where the classes
	$\mc{A}^i \subset \mc{A}^i(\boldsymbol{\phi}^{-i}_\beta)$, $1 \leq i \leq N$, of feedback controls are defined just as in Theorem \ref{verificationthm}.
	To sketch a proof of this required verification theorem in two dimensions,
	one can establish the analog of \eqref{lyapunovsde}, \eqref{finiteh2} by using Lemma \ref{2dehjb} to compute (recall the definition \eqref{2dglobalcostbetter} of $F^N(\bz)$)
		\begin{equation}  \label{2dlyapunovsde}
	\begin{aligned}
	 & dW_\beta(\bZ_t^*) =  \left ( - F^N(\bZ^*_t) + N \lambda^N_\beta 
	  -  \frac{1}{2} \Vert \nabla_\bz W_\beta(\bZ^*_t) \Vert^2 \right )  dt + \frac{\sigma}{\sqrt{N-1}}\nabla_\bz W_\beta(\bZ^*_t) \cdot d\mathbf{B}_t \\
	  & = \left ( - \frac{\Vert \bZ_t^* \Vert^2}{4} - \frac{\beta^2}{6} \sum_{i=1}^N \int \int_{ \substack{w,u \in \RR^2 \\ w \neq u}} \frac{\mu^{N,i}_{\bZ^*_t}(dw) \mu^{N,i}_{\bZ^*_t}(du)}{D^2(Z^{*i}_t-w, Z^{*i}_t-u)}  - \frac{\beta^2}{4} \sum_{i=1}^N \frac{(h_2 * \mu^{N,i}_{\bZ^*_t})(Z^i_t)}{N-1}  \right )  dt \\
	  & + N \left (\frac{\beta}{8} + \frac{\sigma^2}{2(N-1)} \right )  dt + \frac{\sigma}{\sqrt{N-1}}\nabla_\bz W_\beta(\bZ^*_t) \cdot d\mathbf{B}_t 
	  \end{aligned}
	  \end{equation}
which gives 
\begin{equation} \label{2dfiniteh2}
	\mb{E} \sum_{i=1}^N \int_0^{T \wedge T_M} (h_2 * \mu^{N,i}_{\bZ_t^*})(Z_t^{*i}) dt 
	\leq \frac{4(N-1)}{\beta^2} \cdot \left [N \left (\frac{\beta}{8} + \frac{\sigma^2}{2(N-1)} \right )T + W_\beta(\bz_0) + M'  \right ] < \infty.
\end{equation}
where $T_M$, $M'$ are defined as in the proof of Theorem \ref{verificationthm}.
Notice here we no longer require $\beta > \sigma^2$, merely $\beta >0$.  
The ergodicity statement requires a little more effort because one cannot simply exploit convexity, which is special to the one dimensional case; instead, one can proceed in a similar manner as Bolley-Chafa\"{i}-Fontbona \cite{bolley2018dynamics}
or Lu-Mattingly \cite{lu2019geometric} by relying on Lyapunov techniques. 
	
	Returning to the proof, to confirm that $\boldsymbol{\phi}_\beta$ is an approximate closed loop Nash equilibrium, fix an arbitrary $C_2>0$ and $\psi \in \mc{A}^i$ for some $1 \leq i \leq N$. 
	Let
	$$
	\epsilon_N := \left ( C_2 - \frac{3\beta^2}{8} \right )  \frac{\mathbb{E}(h_2 * \mu^{N,i}_\bZ)(Z^i)}{N-1} \geq 0.  
	$$
	Then we have that 
	$$
	\begin{aligned}
	J^{N,i}_{\frac{\beta^2}{8},C_2}(\phi_\beta^i | \bx, \boldsymbol{\phi}_\beta^{-i}) 
	&= J^{N,i}_{\frac{\beta^2}{8},\frac{3\beta^2}{8}}(\phi_\beta^i | \bx, \boldsymbol{\phi}_\beta^{-i}) + \epsilon_N \\
	& \leq  J^{N,i}_{\frac{\beta^2}{8},\frac{3\beta^2}{8}}(\psi | \bx, \boldsymbol{\phi}_\beta^{-i}) + \epsilon_N
	\\
	&\leq 
	J^{N,i}_{\frac{\beta^2}{8},C_2}(\psi | \bx, \boldsymbol{\phi}_\beta^{-i}) + \epsilon_N,
	\end{aligned}
	$$
	where the first inequality uses \eqref{coulomboptimality} and
	the second inequality follows simply by the fact $C_2 \geq 3\beta^2/8$. 
	Hence, $\boldsymbol{\phi}^{-i}_\beta$ is a $\epsilon_N$-closed loop Nash equilibrium with cost functional $J^{N,i}_{\frac{\beta^2}{8},C_2}$. 
	But by Theorem \ref{big2dtheorem}, $\epsilon_N$ goes to $0$ as $N \to \infty$, so $\boldsymbol{\phi}^{-i}_\beta$ is in fact an approximate closed loop Nash equilibrium with limiting optimal cost $\beta/4$, as required.
	
	This completes the proof for the closed loop case;
	the open loop case is similar, but relies instead on the weaker
	assumption $C_2 \geq \beta/8$ and 
	requires exploiting the potential 
	structure to reduce consideration to optimality in the auxiliary global problem.
\end{proof}

\subsection*{Acknowledgments} 

The second author would like to thank many people: Ramon Van Handel, for discussing ergodic theory and a toy version of the open loop model; 
Mykhaylo Shkolnikov, for introducing him to Section 3 in \cite{vgmsedge} and for important suggested edits; and Daniel Lacker, for helpful comments on early drafts.
The first author was partially supported by NSF \#DMS--1716673,
and the first and second author by ARO \#W911NF--17--1--0578.
The second and third authors also thank IPAM for hosting them during final edits of the initial version of this paper.

\bibliography{buses}
\bibliographystyle{plain}

\end{document}